\newtheorem{thm}{Theorem}
\newtheorem{prp}[thm]{Proposition}
\theoremstyle{definition}
\newtheorem{df}[thm]{Definition}
\newtheorem{exa}[thm]{Example}
\newtheorem{cor}[thm]{Corollary}
\theoremstyle{remark}
\newtheorem{rem}[thm]{Remark}
\numberwithin{equation}{section}
\numberwithin{thm}{section}
\DeclareMathOperator{\id}{id}
\DeclareMathOperator{\map}{map}
\DeclareMathOperator{\Sur}{Sur}
\DeclareMathOperator{\St}{St}
\DeclareMathOperator{\sh}{sh}
\DeclareMathOperator{\card}{card}
\def\Sp{\mathbf{Top}}
\def\hTop{\mathbf{hTop}}
\def\Top{\mathbf{Top}}
\def\Set{\mathbf{Set}}
\def\vP{\vec{P}}
\def\vR{\vec{\R}}
\def\vT{\vec{T}}
\def\vpi{\vec{\pi}}
\def\vI{\vec{I}}
\def\vSigma{\vec{\Sigma}}
\def\R{\mathbb{R}}
\def\Z{\mathbb{Z}}
\def\Set{\mathbf{Set}}
\def\Cat{\mathbf{Cat}}
\def\fF{\mathfrak{F}}
\def\fH{\mathfrak{H}}
\def\cA{\mathcal{A}}
\def\cB{\mathcal{B}}
\def\cC{\mathcal{C}}
\def\cE{\mathcal{E}}
\def\cM{\mathcal{M}}
\def\cS{\mathcal{S}}
\def\cT{\mathcal{T}}
\def\cU{\mathcal{U}}
\def\bO{\mathbf{0}}
\def\bI{\mathbf{1}}
\def\bx{\mathbf{x}}
\def\by{\mathbf{y}}
\def\ba{\mathbf{a}}
\def\bb{\mathbf{b}}
\def\vSP{\mathrm{S}\vec{\mathrm{P}}}
\mathchardef\mhyphen="2D
\title{Stable components of directed spaces}
\author{Krzysztof Ziemia\'nski}
\thanks{ Krzysztof Ziemia\'nski \\
	\textbf{Address: }              
              Institute of Mathematics, Polish Academy of Sciences,
               ul. \'Sniadeckich 8, 00-656 Warszawa, Poland. \\
	\textbf{E-mail: } ziemians@mimuw.edu.pl \\
	\textbf{ORCID: } 0000-0001-7695-4028
}
\begin{document}

%

\begin{abstract}
	In this paper, we introduce the notions of stable future, past and total component systems on a directed space with no loops. Then, we associate the stable component category to a stable (future, past or total) component system. Stable component categories are enriched in some monoidal category, eg. the homotopy category of spaces, and  carry information about the spaces of directed paths between particular points. It is shown that the geometric realizations of finite pre-cubical sets with no loops admit unique minimal stable (future/past/total) component systems. These constructions provide a new family of invariants for directed spaces.\\
	\textbf{Keywords:} d-space; stable component system; enriched category; trace category.\\
	\textbf{MSC 2010:} 18B35, 18D20, 55U40.
\end{abstract}

\maketitle

\section{Introduction}

Directed spaces, or d-spaces, \cite{Gr} are topological spaces with a distinguished family of paths, called directed paths. They can be used for modeling the behavior of concurrent programs. Points of the directed space represent possible states of a concurrent program, while directed paths represent possible partial executions. This approach allows to employ topological invariants of d-spaces to examine executions of concurrent programs which they represent. All d-spaces considered in this paper are assumed to have no loops, i.e., all directed loops are constant. d-spaces with no loops are slightly more general than partially ordered spaces (po-spaces). We will write $x\leq y$ if there exists a d-path from $x$ to $y$.

Unfortunately, most of the classical (non-directed) homotopy invariants do not have satisfactory directed counterparts. In \cite{Gr}, Grandis introduces the directed fundamental category $\vpi_1(X)$ \cite{Gr} of a d-space $X$. The objects of $\vpi_1(X)$ are points of $X$ and the morphisms from $x$ to $y$ are homotopy classes of d-paths from $x$ to $y$. This is a directed analogue of the fundamental groupoid of a topological space. Alas, unlike for fundamental groupoids, directed fundamental categories are not naturally equivalent to any finite, or even countable, category, except in the most trivial cases.

To overcome this problem, several authors introduced and studied component categories \cite{FGHR, GHComp2,RInv}, which are quotient categories of $\vpi_1(X)$ (or some related categories as in \cite{RInv}). These papers share a common idea, which will be recalled shortly below on the example of \cite{GHComp2}. For a po-space $X$, consider a class $\cE$ of morphisms of $\vpi_1(X)$ that are ``equivalences"; namely, this class consists of \emph{Yoneda morphisms}.  A morphism $\sigma\in\cC(x,y)$ of a category $\cC$ is a Yoneda morphism if it induces bijections:
\begin{itemize}
\item[(P)]{$\cC(z,x)\ni\alpha \to \sigma\circ \alpha \in \cC(z,y)$, whenever $\cC(z,x)\neq \emptyset$, and }
\item[(F)]{$\cC(y,z)\ni \alpha \to \alpha\circ\sigma\in \cC(x,z)$, whenever $\cC(y,z)\neq \emptyset$.}
\end{itemize}
Then the authors prove that there is a unique maximal system $\Sigma\subseteq \cE$ with good categorical properties and define the component category of $X$ as the category of fractions $\vpi_1(X)[\Sigma^{-1}]$. Informally, condition (P) can be stated as ``$\sigma$ is a past equivalence between $x$ and $y$ with respect to $z$, for every $z$ such $x$ that is reachable from $z$".

This approach works very well in many cases. The main motivation for searching for a new definition of the component system is the following example, due to Dubut \cite{DubutPhD}, for which this approach fails, i.e., the component category obtained is not finite. Consider the d-space that is the geometric realization of the precubical set $X$
\begin{equation}
		\begin{tikzpicture}
		\def\x{2};
		\fill[color=black!6!white] (0,0)--(3*\x,0)--(3*\x,\x)--(\x,\x)--(\x,2*\x)--(0,2*\x);
		\draw[thick] (0,0)--(3*\x,0)--(3*\x,\x)--(0,\x);
		\draw[thick] (0,0)--(0,2*\x)--(\x,2*\x)--(\x,0);
		\draw[thick] (2*\x,0)--(2*\x,\x);
		\node[above] at (\x/2,2*\x) {$e$};
		\node[below] at (2.5*\x,0) {$e$};
		\draw[thick,->] (0,0)--(0,0.5*\x);
		\draw[thick,->] (\x,0)--(\x,0.5*\x);
		\draw[thick,->] (2*\x,0)--(2*\x,0.5*\x);
		\draw[thick,->] (3*\x,0)--(3*\x,0.5*\x);
		\draw[thick,->] (0,\x)--(0,1.5*\x);
		\draw[thick,->] (\x,\x)--(\x,1.5*\x);
		\draw[thick,->] (0*\x,0)--(0.5*\x,0);
		\draw[thick,->] (1*\x,0)--(1.5*\x,0);
		\draw[thick,->] (2*\x,0)--(2.5*\x,0);
		\draw[thick,->] (0*\x,\x)--(0.5*\x,\x);
		\draw[thick,->] (1*\x,\x)--(1.5*\x,\x);
		\draw[thick,->] (2*\x,\x)--(2.5*\x,\x);
		\draw[thick,->] (0,2*\x)--(0.5*\x,2*\x);
		\node at (\x/2,\x/2){$A$};
		\node at (\x/2,\x*1.5){$B$};
		\node at (\x*1.5,\x/2){$C$};
		\node at (\x*2.5,\x/2){$D$};
        \fill(0.2*\x, 0.1*\x) circle (0.05);
        \fill(0.7*\x, 0.3*\x) circle (0.05);
        \node[above left] at (0.2*\x, 0.1*\x) {x};
        \node[above right] at (0.7*\x, 0.3*\x) {y};
        \fill(2.4*\x, 0.8*\x) circle (0.05);
        \node[right] at (2.4*\x, 0.8*\x) {z};
	\end{tikzpicture}
\end{equation}
where the edges marked by $e$ are identified. The areas $A$ and $D$ are closed.
 In this case, no non-identity morphism of $\vpi_1(X)$ represented by a path in the square $A$ is a Yoneda morphism; for any two points $x\lneq y\in A$ one can find a point $z\in D$ such that there are two morphisms from $x$ to $z$ but only one from $y$ to $z$. As a consequence, no two points in $A$ are Yoneda equivalent, and the component category of $X$ is uncountable.

In this paper we propose a new definition of the component category of a directed space with no loops. These categories, called stable component categories, are finite for all d-spaces which are geometric realizations of finite precubical sets with no loops. The main idea is to relax conditions (P) and (F) defining when points $x$ and $y$ are considered equivalent. Note that, in the example above, for any $x\lneq y\in A$ we have a bijection $\vpi_1(X)(y,z)\simeq\vpi_1(X)(x,z)$ if $z\in D$ is "large enough", i.e., if $z\geq z(D,x,y)$ for some $z(D,x,y)$ depending on $D$, $x$ and $y$. Thus, we no longer require that $x$ and $y$ are future equivalent with respect to every $z\geq y$ but that they are equivalent with respect to "large enough" points in every component. This leads to an axiomatic definition of component systems (Definition \ref{d:StableComponentSystem}); a decomposition of $X=\coprod A_i$ into disjoint subsets is a stable future component system if every pair $(A_i,A_j)$ is future stable, i.e., satisfies, among others, a condition similar to the property of sets $A$ and $D$ formulated above. In a similar way, we define past stable component systems, and total stable component systems, which are both past and future stable. In the example above, $X=A\mathop{\dot\cup} B\mathop{\dot\cup}  C\mathop{\dot\cup}  D$ is a stable total component system on $X$; see Example \ref{x:Dubut} for details.
A similar approach is presented in \cite{GHK}.

Stable component systems would not allow us to define invariants of d-spaces if one would not be able to single out a unique stable component system for a given d-space. It is not true in general that every d-space without loops admits a unique coarsest stable component system (Remark \ref{r:Counterexample}). Nevertheless, we are able to prove that if $X$ admits any finite component system, then there exists the unique coarsest one (Theorem \ref{t:UniqueSystem}). The class of d-spaces having a finite (and, therefore, a coarsest) component system includes the geometric realizations of finite cubical complexes with no loops (Theorem \ref{t:InteriorSystem}). Eventually, we define the component category associated to a stable component system (Section \ref{s:ComponentCategory}). These results allow to define three categories for a sufficiently good d-space $X$: the component categories of its coarsest stable future, past and total component system.

Apart from three possible flavors (future, past and total), the stable components systems in this paper are parametrized by some class of equivalences of topological spaces. There are many possible choices for when a d-path $\alpha$ from $x$ to $y$  should be regarded a future equivalence with respect to $z$. One of the possible choices is to claim that $\alpha$ is an equivalence if
\begin{equation}\label{e:ZeroEq}
	\vpi_1(X)(y,z)\ni [\omega] \mapsto [\alpha*\omega]\in \vpi_1(X)(x,z)
\end{equation}
is a bijection, as in the example above; another possibility is to require that the concatenation map
\begin{equation}\label{e:InfEq}
	\vP_X(y,z) \ni \omega \mapsto \alpha*\omega\in \vP_X(x,z)
\end{equation}
between the spaces of directed paths with the given endpoints is a weak homotopy equivalence. These are the two most natural, and extreme, cases but other choices  of the class of equivalences are possible, like all homology equivalences, all maps inducing isomorphisms on homotopy groups up to some dimension, etc. To handle all these cases simultaneously, we introduce a class $\fF$ of equivalences in the category of topological spaces $\Top$. The most important examples are the class $\fF_\infty$ of all weak homotopy equivalences, as in (\ref{e:InfEq}) and the class $\fF_0$ of maps inducing a bijection between path-connected components (\ref{e:ZeroEq}).

The component categories of stable $\fF$--component systems will be enriched in some monoidal category $\cS$, assuming that there is given a monoidal functor $L:\Top\to \cS$ that sends maps belonging to $\fF$ into isomorphisms in $\cS$. Thus, for a d-space $X$ admitting a coarsest component systems we obtain three families of $\cS$--enriched categories, depending on the choice of $\fF$ and $L$: the future coarsest $\fF$--component category $\vSP^+_{\fF,L}(X)$, the past one $\vSP^-_{\fF,L}(X)$ and the total one $\vSP^\pm_{\fF,L}(X)$.

 Three choices of $\fF$ and $L$ deserve a special attention:
\begin{itemize}
\item{
	$\fF=\fF_0$ is the class of maps inducing an isomorphism on $\pi_0$, and $L=\pi_0:\Top\to\Set$. Then the component category $\vSP_{\fF,L}(X,\cA)$ of a component system $\cA$ is $\Set$--enriched, i.e., it is a category in the usual sense. If $\cA$ is the coarsest component system, $\vSP^\mu_{\fF_0,\pi_0}(X;\cA)$, $\mu\in\{+,-,\pm\}$, can be regarded as directed analogues of $\pi_0(X)$.
}
\item{
	$\fF=\fF_\infty$ is the class of weak homotopy equivalences and $L:\Top\to\hTop$ is the forgetful functor into the homotopy category. The component category is enriched in the homotopy category and carries information about the homotopy types of the spaces appearing as directed path spaces in $X$. This can be viewed as ``the directed total homotopy group".
}
\item{
	$\fF=\fH_R$ is the class of maps inducing an isomorphism on $H_*(-;R)$ for some principal ideal domain $R$, and $L=H_*$  is the homology functor into the category of graded $R$--modules with the monoidal structure given by the graded tensor product. This choice seems to be best suited for specific calculations.
}
\end{itemize}

\section{Preliminaries}

\subsection*{Directed spaces}

For a topological space $X$, let $P_X=\map([0,1],X)$ denote the space of paths on $X$ with the compact-open topology.

\begin{df}\label{d:dSpace}
    \emph{A directed space}, or \emph{a d-space} \cite{Gr} is a pair $(X,\vP_X)$, where $X$ is a topological space and $\vP_X\subseteq P_X$ is a family of paths, called \emph{directed paths} or \emph{d-paths}, that satisfies the following conditions:
     \begin{enumerate}[\normalfont (a)]
     \item{Every constant path $\sigma_x$, $x\in X$, is a d-path.}
     \item{The concatenation of two d-paths is a d-path. Namely, if $\alpha,\beta\in \vP_X$ and $\alpha(1)=\beta(0)$, then $\alpha*\beta\in\vP_X$, where
     \[
        \alpha*\beta(t)=
        \begin{cases}
            \alpha(2t) & \text{for $0\leq t \leq \tfrac{1}{2}$,}\\
            \beta(2t-1) & \text{for $\tfrac{1}{2}\leq t \leq 1$.}
        \end{cases}
     \]
     }
     \item{Non-decreasing reparametrizations of d-paths are d-paths, i.e., if $\alpha\in \vP_X$ and $f:[0,1]\to[0,1]$ is a non-decreasing continuous function, then $\alpha\circ f\in \vP_X$.}
     \end{enumerate}
     For short, we will write $X$ instead of $(X,\vP_X)$. The subspace $\vP_X\subseteq P_X$ will called \emph{the d-structure} of $X$.
\end{df}

For d-spaces $X,Y$, a continuous map $f:X\to Y$ is \emph{a d-map} if $f\circ \alpha\in \vP_Y$ for every d-path $\alpha\in\vP_X$. Equivalently, one can require that the image of $\vP_X$ under the map $P_X\to P_Y$ induced by $f$ is contained in $\vP_Y$. A d-map is \emph{a d-homeomorphism} if it is a bijection and its inverse is a d-map.

Some examples of d-spaces are described below:

\begin{exa}
	\emph{The directed real line} $\vR=(\R,\vP_{\vR^n})$, where $\vP_{\vR^n}$ is the subspace of non-decreasing continuous functions $[0,1]\to\R$.
\end{exa}

\begin{exa}
	\emph{The product $X\times Y$} of d-spaces $X$ and $Y$ is defined by
	\begin{equation*}
		\vP_{X\times Y}= \vP_X\times \vP_Y = \{\alpha=(\alpha_X,\alpha_Y)\;|\; \alpha_X\in\vP_X,\; \alpha_Y\in\vP_Y\}.
	\end{equation*}
	An important example is \emph{the directed Euclidean space $\vR^n=\overbrace{\vR\times\dots\times\vR}^{\text{$n$ times}}$}.
\end{exa}

\begin{exa}
	For a d-space $X$, every subspace $A\subseteq X$ has the inherited d-structure given by $\vP_A:=\vP_X\cap P_A$. In particular, this defines:
	\begin{itemize}
	\item{\emph{The directed interval $\vI=[0,1]\subseteq \vR$,}}
	\item{\emph{The directed cube $\vI^n=[0,1]^n\subseteq \vR^n$}.}
	\item{\emph{The boundary of $\vI^n$},
	\begin{equation*}
		\partial\vI^n = \{(x_1,\dots,x_n)\in \vI^n\; |\; \text{at least one of $x_i$'s equals $0$ or $1$}\}\subseteq \vI^n.
	\end{equation*}}
	\end{itemize}
\end{exa}

\begin{exa}
	\emph{The opposite d-space} of $X=(X,\vP_X)$ is the d-space
	\begin{equation*}
		X^{op}=(X,\vP^{op}_X=\{\alpha^{op}\;|\;\alpha\in \vP_X\}),
	\end{equation*}
	where $\alpha^{op}(t)=\alpha(1-t)$.
\end{exa}

Fix a d-space $X$.
For subsets $A,B,C\subseteq X$, denote
\begin{equation}
	\vP_A(B,C)=\{\alpha\in \vP_A\;|\; \alpha(0)\in B,\; \alpha(1)\in C\}.
\end{equation}
A pair of d-paths $\alpha,\beta\in \vP_X$ induces the map
\begin{equation}
	\vP(\alpha,\beta):\vP_X(\alpha(1),\beta(0))\ni \omega \mapsto \alpha*\omega*\beta \in \vP_X(\alpha(0),\beta(1)).
\end{equation}

The monoid $\Sur^+([0,1])$ of non-decreasing surjective self-maps of $[0,1]$ acts naturally on $\vP_X$. The quotient space of this action is \emph{the trace space $\vT_X$} of $X$, and elements of $\vT_X$ will be called \emph{traces}; as above, $\vT_X(A,B)=\vP_X(A,B)/\Sur^+([0,1])$ stands for the space of traces having the required endpoints.
As shown in \cite{RInv}, for any points $x,y\in X$ the quotient map $\vP_X(x,y)\to \vT_X(x,y)$ is a weak homotopy equivalence. As a result, in the considerations below, path spaces can be replaced by trace spaces without any consequences.

A d-space $X$ \emph{has no loops} if $\vP_X(x,x)=\{\sigma_x\}$ for every $x\in X$, where $\sigma_x$ stands for the constant path. The relation
\begin{equation}
	x\leq y \; \Leftrightarrow \; \vP_X(x,y)\neq\emptyset
\end{equation}
is reflexive and transitive for every d-space $X$. If $X$ is has no loops, it is also antisymmetric and then $(X,\leq)$ is a partial order. For a subset $A\subseteq X$ and $x\in X$, we denote
\begin{equation}
	A_{\leq x}:=\{y\in A\;|\; y\leq x\},\qquad A_{\geq x}:=\{y\in A\;|\; y\geq x\}.
\end{equation}
The d-space $X$ with the relation $\leq$ is not necessarily a partially ordered space in the sense of Nachbin \cite{Nachbin} because this partial order is not, in general, closed, regarded as a subset of $X\times X$.

\subsection*{Equivalences}
Let $\Top$ denote the category of topological spaces and continuous maps.
\begin{df}
    A family $\fF$ of morphisms of $\Top$ is \emph{an equivalence system} if the following conditions are satisfied:
    \begin{enumerate}[\normalfont (a)]
        \item{$\fF$ satisfies the 2--out--of--3 property.}\label{i:FTT}
        \item{If maps $f,g$ are homotopic, then either $f,g\in\fF$ or $f,g\not\in \fF$.}
        \item{$\fF$ contains all weak homotopy equivalences.} \label{i:FWHE}
        \item{If $f\in \fF$, then $\pi_0(f)$ is a bijection.}\label{i:FPiZero}
        \item{For a finite family of maps $\{f_i:X_i\to Y_i\}_{i\in I}$, the disjoint sum $\coprod_i f_i:\coprod_i X_i\to \coprod_i Y_i$ belongs to $\fF$ if and only if $f_i\in \fF$ for all $i\in I$.}
        \item{If $f:X\to Y,g:X'\to Y'\in \fF$, then $f\times g:X\times X'\to Y\times Y'\in \fF$.}
    \end{enumerate}
    The morphisms belonging to $\fF$ will be called \emph{$\fF$--equivalences}. Two spaces $X,Y$ are \emph{$\fF$--equivalent} if they can be connected by a zig-zag of $\fF$--equivalences. A space $X$ is \emph{$\fF$--contractible} if $X\to \{*\}$ is an $\fF$--equivalence; clearly any space that is $\fF$--equivalent to a contractible space is $\fF$--contractible.
\end{df}

\begin{exa}
    For $k\in \Z_{\geq 0}\cup \{\infty\}$, let $\fF_k$ be the family of maps that induce an isomorphism on $\pi_0$ and isomophisms on all homotopy groups $\pi_n$, for every $n\leq k$ and for every choice of basepoints. Then $\fF_k$ is an equivalence system. This includes the examples mentioned in the introduction: $\fF_0$ is the class of maps inducing isomorphism on $\pi_0$, and $\fF_\infty$ is the class of weak homotopy equivalences.
\end{exa}

Notice that conditions (\ref{i:FWHE}) and (\ref{i:FPiZero}) guarantee that
\begin{equation}
    \fF_\infty \subseteq \fF \subseteq \fF_0
\end{equation}
for every equivalence system $\fF$.

\begin{exa}
    For an abelian group $A$ and $k\in \Z_{\geq 0}\cup \{\infty\}$, the family $\fH_{A,k}$ of maps that induce an isomorphism on $H_n(-;A)$ for $n\leq k$ is an equivalence system.
\end{exa}

In the remaining part of the paper we will assume that $\fF$ is a fixed equivalence system.

\section{Stable components}

Fix a d-space $X$ with no loops and an equivalence system $\fF$.

\begin{df}
	A subset $A\subseteq X$ is
	\begin{enumerate}[\normalfont (a)]
	\item{\emph{d-convex} if $\vP_A(A,A)=\vP_X(A,A)$, i.e., every directed path having endpoints in $A$ is contained in $A$.}
	\item{\emph{future connected} if, for every $x,y\in A$, there exists $z\in A$ such that $\vP_A(x,z)\neq \emptyset\neq \vP_A(y,z)$.}
	\item{\emph{past connected} if, for every $x,y\in A$, there exists $z\in A$ such that $\vP_A(z,x)\neq \emptyset\neq \vP_A(z,y)$.}
	\end{enumerate}
\end{df}

\begin{rem}
    A future connected subset may or may not have a maximal point. If such a point exists, it is unique.
\end{rem}

\begin{df}\label{d:Cofinal}
	We say that future (resp. past) connected subsets $A,B\subseteq X$ are \emph{cofinal} (resp. \emph{coinitial}) if there exists $y\in A\cap B$ such that $A_{\geq y}=B_{\geq y}$ (resp. $A_{\leq y}=B_{\leq y}$).
\end{df}

From this point we will introduce only the future versions of definitions;  the past counterparts are the same as the future ones applied to the opposite d-space $X^{op}$.

\begin{df}
    Let $A\subseteq X$ be a future connected subset and let $\alpha\in\vP_X$ be a d-path.
    \begin{enumerate}[\normalfont (a)]
    \item{
        $A$ is \emph{future $\fF$--invariant with respect to $\alpha$} if the map
    	\begin{equation*}
			\vP(\alpha,\beta):\vP_X(\alpha(1),\beta(0))\ni \omega \mapsto \alpha*\omega*\beta \in \vP_X(\alpha(0),\beta(1))	
	   \end{equation*}
    	is an $\fF$--equivalence for every $\beta\in \vP_A$.
    }
    \item{
       A point $z\in A$ \emph{future $\fF$--stabilizes} $\alpha$ in $A$ if $A_{\geq z}$ is future $\fF$--invariant with respect to $\alpha$.
    }
    \item{
        \emph{The future $\fF$--stabilizer of $\alpha$ in $A$} is the set
        \[
            \St^+(\alpha;A)=\St^+_{\fF}(\alpha;A)\subseteq A
        \]
        of all points of $A$ that future $\fF$--stabilize $\alpha$ in $A$. The past $\fF$--stabilizer of $\alpha$ in $A$ will be denoted by $\St^-_\fF(A;\alpha)$.
    }
    \item{
        If $\St^+_\fF(\alpha;A)\neq\emptyset$, then we say that \emph{$\alpha$ future $\fF$--stabilizes in $A$}, or that $A$ is \emph{future $\fF$--stable with respect to $\alpha$}.
    }
    \end{enumerate}
\end{df}

If $A$ has a maximal point $x\in A$, then $\alpha$ future $\fF$--stabilizes in $A$ if and only if the map
\begin{equation}
    \vP_X(\alpha,x):\vP_X(\alpha(1),x)\ni \omega \mapsto \alpha*\omega \in \vP_X(\alpha(0),x)
\end{equation}
is an $\fF$--equivalence.

If $\alpha=\sigma_x$ is a constant path we will sometimes write $\St^+_\fF(x;A)$ instead of $\St^+_\fF(\sigma_x;A)$. The prefixes or indices $\fF$ will be skipped if it is clear which class of equivalences is considered.

Next, we collect elementary properties of stabilizers, which will be used frequently later on.

\begin{prp}\label{p:PropertiesOfStabilizers}
	Fix $\alpha\in \vP_X$ and a future connected subset $A\subseteq X$.
	\begin{enumerate}[\normalfont (a)]
	\item{If $x\in \St^+(\alpha;A)$ and $y\in A_{\geq x}$, then $y\in \St^+(\alpha;A)$. In particular, if $\St^+(\alpha;A)$ is non-empty, then $A$ and $\St^+(\alpha;A)$ are cofinal.}
	\item{$x\in \St^+(\alpha;A)$ if and only if $x\in \St^+(\alpha;A_{\geq x})$.}
	\item{$\St^+(\alpha;A)\subseteq \St(\sigma_{\alpha(t)};A)$ for $t=0,1$.}
	\item{If $\St^+(\alpha;A)\neq \emptyset$ and $x\in A$, then $\St^+(\alpha;A)\cap A_{\geq x}\neq \emptyset$.}\label{i:PCofinal}
	\item{If $\St^+(\alpha_i;A)\neq \emptyset$ for $\alpha_i\in \vP_X$, $i\in [1:n]$ then
	\[
        \St^+(\alpha_1,\dots,\alpha_n; A):=\St^+(\alpha_1;A)\cap\dots\cap  \St^+(\alpha_n;A)\neq \emptyset.
    \]
	}
    \item{If $\beta\in\vP_X$ and $\beta(0)=\alpha(1)$, then
    \[
        \St^+(\alpha,\beta;A)\subseteq \St^+(\alpha*\beta; A).
    \]
    }\label{i:PConcat}
	\item{If $\alpha\sim\beta\in \vP_X(x,y)$, i.e., $\alpha$ and $\beta$ lie in the same path-connected component of $ \vP_X(x,y)$, then  $\St^+(\alpha;A)=\St^+(\beta;A)$.}\label{i:PHtp}
	\end{enumerate}
\end{prp}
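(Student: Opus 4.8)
The plan is to verify each of the seven items in turn, using the definition of $\St^+$ and the formal properties of $\fF$. Most parts follow from reducing the claim about the concatenation map $\vP(\alpha,\beta)$ to a statement about $\fF$-equivalences and then invoking the axioms (2-out-of-3, homotopy invariance, closure under products, behavior on disjoint sums).

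\medskip
\textbf{Parts (a)--(d).} For (a), if $x\in\St^+(\alpha;A)$ then $A_{\geq x}$ is future $\fF$--invariant with respect to $\alpha$; for $y\in A_{\geq x}$ we have $A_{\geq y}=(A_{\geq x})_{\geq y}\subseteq A_{\geq x}$, and the maps $\vP(\alpha,\beta)$ for $\beta\in\vP_{A_{\geq y}}$ form a subfamily of those for $\beta\in\vP_{A_{\geq x}}$, so each is an $\fF$--equivalence; hence $y\in\St^+(\alpha;A)$. Cofinality then follows from Definition \ref{d:Cofinal} with $y=x$. Part (b) is essentially a tautology once one observes that $A_{\geq x}=(A_{\geq x})_{\geq x}$, so "$A_{\geq x}$ is future $\fF$--invariant with respect to $\alpha$'' is literally the same statement whether computed inside $A$ or inside $A_{\geq x}$. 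For (c), take $\beta=\sigma_{\alpha(1)}$ in the definition to get that $\vP(\alpha,\sigma_{\alpha(1)})=\vP(\alpha,-)$-concatenation is controlled; more precisely, note $\St^+(\sigma_{\alpha(1)};A)$ requires $\vP(\sigma_{\alpha(1)},\beta)=\vP(\alpha(1)\to\text{stuff})$, and one uses the factorization of $\vP(\alpha,\beta)$ through $\vP(\sigma_{\alpha(0)},\cdot)$ and $\vP(\cdot,\sigma_{\alpha(1)})$ together with 2-out-of-3; I would write this symmetrically for $t=0$ and $t=1$. Part (d) is immediate from (a): if $z\in\St^+(\alpha;A)$ then, since $A$ is future connected, pick $w\in A$ with $\vP_A(z,w)\neq\emptyset\neq\vP_A(x,w)$; then $w\in A_{\geq z}$, so $w\in\St^+(\alpha;A)$ by (a), and $w\in A_{\geq x}$, giving $\St^+(\alpha;A)\cap A_{\geq x}\ni w$.

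\medskip
\textbf{Parts (e)--(g).} Part (e): by repeated use of (d), starting from a point in $\St^+(\alpha_1;A)$ and successively moving up past points witnessing $\St^+(\alpha_2;A),\dots$; more carefully, using future connectedness and (a), one finds a single $z\in A$ lying above witnesses for all $\alpha_i$, and then $z$ lies in each $\St^+(\alpha_i;A)$ by (a), hence in the intersection. (An induction on $n$ using (d) applied inside $A_{\geq z_1}$, which is again future connected, is cleanest.) Part (f): suppose $z\in\St^+(\alpha;A)\cap\St^+(\beta;A)$; for $\gamma\in\vP_{A_{\geq z}}$ we must show $\vP(\alpha*\beta,\gamma):\omega\mapsto(\alpha*\beta)*\omega*\gamma$ is an $\fF$--equivalence. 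Writing $(\alpha*\beta)*\omega*\gamma$ up to reparametrization as $\alpha*(\beta*\omega*\gamma)$, the map $\vP(\alpha*\beta,\gamma)$ factors (up to the reparametrization weak equivalences, which are in $\fF$ by axiom (c)) as $\vP(\alpha,\sigma_{\beta(1)})$ applied after $\vP(\beta,\gamma)$: first $\vP(\beta,\gamma)$ sends $\vP_X(\beta(1),\gamma(0))\to\vP_X(\beta(0),\gamma(1))$, an $\fF$--equivalence since $z$ future $\fF$--stabilizes $\beta$ and $\gamma\in\vP_{A_{\geq z}}$; then $\vP(\alpha,\sigma_{\beta(0)}\,*\,\gamma\text{-tail})$—here one should instead say: $\beta*\gamma\in\vP_{A_{\geq z}}$ has its image... actually the clean statement is that $\vP(\alpha*\beta,\gamma)=\vP(\alpha,\beta*\gamma)$ up to reparametrization, and $\beta*\gamma\in\vP_{A_{\geq z}}$ because $A_{\geq z}$ is d-convex-free... wait, $A_{\geq z}$ need not be d-convex, but $\beta*\gamma$ does have endpoints and all values in $A$ since $\beta\in\vP_{A_{\geq z}}$ (as $\beta$ starts at $z$ after possibly noting $\beta(0)=\alpha(1)$ and $z\le\alpha(1)$... hmm). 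I would handle this by choosing the witness $z$ to lie below $\alpha(1)=\beta(0)$ using (c) and (d), so that $\beta\in\vP_{A_{\geq z}}$ and hence $\beta*\gamma\in\vP_{A_{\geq z}}$; then $z\in\St^+(\alpha;A)$ gives $\vP(\alpha,\beta*\gamma)\in\fF$, and composing with reparametrization equivalences and 2-out-of-3 finishes it. Part (g): if $\alpha\sim\beta$ in $\vP_X(x,y)$, then for any fixed $\gamma$ the maps $\vP(\alpha,\gamma)$ and $\vP(\beta,\gamma)$ are homotopic (a homotopy $\alpha_s$ from $\alpha$ to $\beta$ induces the homotopy $\omega\mapsto\alpha_s*\omega*\gamma$), so by axiom (b) one is an $\fF$--equivalence iff the other is; hence the stabilizers coincide.

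\medskip
\textbf{Expected main obstacle.} The routine parts ((a),(b),(d),(g)) are bookkeeping. The genuinely delicate points are (c), (e), (f), where one must juggle reparametrizations and the precise domain/codomain of the concatenation maps, and in particular the fact that $A_{\geq z}$ is not assumed d-convex, so one cannot freely assert that a concatenation of paths with endpoints in $A_{\geq z}$ stays in $A_{\geq z}$ unless the path itself does. The fix throughout is to choose the stabilizing point $z$ carefully (using (c) to push it below the relevant endpoint of $\alpha$, and using future connectedness plus (a),(d) to find a common stabilizer), and then to express the various concatenation maps as composites of other concatenation maps together with non-decreasing reparametrizations—which are weak homotopy equivalences on path spaces and hence lie in $\fF$—so that the 2-out-of-3 property closes the argument. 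I would state a small lemma at the start of the proof recording that $\vP(\alpha*\beta,\gamma)$ and $\vP(\alpha,\beta*\gamma)$ and $\vP(\alpha,\beta)\circ(\text{stuff})$ agree up to such reparametrizations, to avoid repeating the verification.
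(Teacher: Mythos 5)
Parts (a), (b), (d), (e) and (g) of your argument are correct and agree with the paper, which treats them as immediate. Part (c) has the right skeleton (factor $\vP(\alpha,\beta)$ and apply 2--out--of--3), but the bookkeeping is off: the auxiliary $\fF$--equivalences you are entitled to are $\vP(\alpha,\sigma_{\beta(0)})$ and $\vP(\alpha,\sigma_{\beta(1)})$ for $\beta\in\vP_{A_{\geq x}}$ --- the constant paths at points of $\beta$ do lie in $\vP_{A_{\geq x}}$ --- whereas ``take $\beta=\sigma_{\alpha(1)}$'' is not a legitimate move, since $\alpha(1)$ need not belong to $A$ at all. This is repairable, but the same misconception becomes fatal later.

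The genuine gap is in (f), precisely at the point you flagged and then ``fixed'' incorrectly. Your final argument needs $\beta*\gamma\in\vP_{A_{\geq z}}$, and you propose to secure this by choosing the stabilizing point $z$ below $\beta(0)$ so that $\beta\in\vP_{A_{\geq z}}$. This fails twice over. First, the statement is the inclusion $\St^+(\alpha,\beta;A)\subseteq\St^+(\alpha*\beta;A)$, so the condition must be verified for an arbitrary common stabilizer $z$, not for a conveniently chosen one. Second, and more fundamentally, $\alpha$ and $\beta$ are arbitrary d-paths of $X$ and need not meet $A$: in the very applications of (f) (Propositions \ref{p:PathToMaximal} and \ref{p:CkStable}) they lie in a component $C_k$ while $A=C_l$ is a different component, so no point $z\in A$ with $\beta\in\vP_{A_{\geq z}}$ exists, and parts (c), (d) give no way to produce one. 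The repair is the factorization you started and then abandoned: up to reparametrization, hence up to homotopy, $\vP(\alpha*\beta,\gamma)$ agrees with $\vP(\alpha,\gamma)\circ\vP(\beta,\sigma_{\gamma(0)})$ (the paper's choice; $\vP(\alpha,\sigma_{\gamma(1)})\circ\vP(\beta,\gamma)$ works equally well). The only auxiliary paths occurring are constant paths at points of $\gamma$, which lie in $\vP_{A_{\geq z}}$ because $\gamma$ does; hence both factors are $\fF$--equivalences by the hypothesis $z\in\St^+(\alpha;A)\cap\St^+(\beta;A)$, and homotopy invariance together with 2--out--of--3 gives $\vP(\alpha*\beta,\gamma)\in\fF$. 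No d-convexity of $A_{\geq z}$, no containment of $\beta$ in $A$, and no special choice of $z$ is needed, so the ``main obstacle'' described in your closing paragraph is not an obstacle in the paper's proof.
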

\begin{proof}
	(a) and (b) are obvious.
	
	(c): Fix $\beta\in \vP_{A_{\geq x}}$. We have the diagram
	\[
		\begin{diagram}
			\dgARROWLENGTH=3.5em
			\node{\vP(\alpha(1),\beta(0))}
				\arrow{e,t}{\vP(\alpha,\beta(0))}
				\arrow{se,t}{\vP(\alpha,\beta)}
				\arrow{s,l,..}{\vP(\alpha(0),\beta)}
			\node{\vP(\alpha(0),\beta(0))}
				\arrow{s,r,..}{\vP(\alpha(0),\beta)}
		\\
			\node{\vP(\alpha(1),\beta(1))}
				\arrow{e,t}{\vP(\alpha,\beta(1))}
			\node{\vP(\alpha(0),\beta(1))}
		\end{diagram}	
	\]
	which commutes up to homotopy, in which all solid arrows are $\fF$--equivalences. Thus, the 2-out-of-3 property of $\fF$ implies that the dotted ones are also $\fF$--equivalences.
	
	(d) and (e): Since $A$ is future connected, these follow from (a).
	
    (f): Assume that $z\in \St^+(\alpha,\beta; A)$ and $\omega\in\vP_{A_{\geq z}}$. We have
    \[
        \vP(\alpha*\beta,\omega)\sim \vP(\alpha,\omega)\circ \vP(\beta,\sigma_{\omega(0)}).
    \]
    Since $z$ stabilizes both $\alpha$ and $\beta$, both maps in the right-hand composition are $\fF$--equivalences. So the map which is homotopic to their composition also is.

	(g): Homotopic paths induce homotopic maps between the path spaces, and two homotopic maps are either both $\fF$--equivalences or both non--$\fF$--equivalences.
\end{proof}

Notice that (\ref{i:PHtp}) implies that  stability is not a property of a particular path from $x$ to $y$ but rather of its class in $\vpi_1(X)(x,y)$.

\begin{df}
	Let $A,B\subseteq X$ be future connected subsets. The pair $(A,B)$ is \emph{future $\fF$--stable} if $B$ is future $\fF$--stable with respect to  every d-path $\alpha\in\vP_A$.
\end{df}

We do not assume that all d-paths in $A$ are future stabilized by the same element in $B$; it may happen that $(A,B)$ is future stable and yet $\bigcap_{\alpha\in \vP_A} \St^+(\alpha;B)=\emptyset$. However, if $B$ has the final point $y$, then $(A,B)$ is future stable if and only if $y\in\St^+(\alpha;B)$ for all $\alpha\in\vP_A$.

\begin{prp}\label{p:CofinalityPreservesStability}
	Let $A,B,B'\subseteq X$ be future connected subsets. Assume that $B$ and $B'$ are cofinal. Then the pair $(A,B)$ is future stable if and only if $(A,B')$ is  future stable.
\end{prp}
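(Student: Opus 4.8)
The plan is to reduce the statement to a direct comparison of stabilizers. By symmetry it suffices to prove one implication, say that if $(A,B)$ is future stable then $(A,B')$ is future stable. Since $B$ and $B'$ are cofinal, there is a point $y_0\in B\cap B'$ with $B_{\geq y_0}=B'_{\geq y_0}$; call this common subset $C$. The key observation is that future $\fF$--invariance of a subset with respect to a d-path $\alpha$ depends only on the d-paths $\beta$ lying in that subset, so if $C=B_{\geq y_0}=B'_{\geq y_0}$ then $C$ is future $\fF$--invariant with respect to $\alpha$ when viewed inside $B$ if and only if it is when viewed inside $B'$ — in either case the condition is literally the same: that $\vP(\alpha,\beta)$ is an $\fF$--equivalence for every $\beta\in\vP_C$. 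Hence $y_0\in\St^+(\alpha;B)\iff C$ is future $\fF$--invariant w.r.t.\ $\alpha\iff y_0\in\St^+(\alpha;B')$, using Proposition \ref{p:PropertiesOfStabilizers}(b) to identify $\St^+(\alpha;B)$ near $y_0$ with $\St^+(\alpha;B_{\geq y_0})$, and likewise for $B'$.

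With this in hand, fix $\alpha\in\vP_A$ and assume $(A,B)$ is future stable, so $\St^+(\alpha;B)\neq\emptyset$. By Proposition \ref{p:PropertiesOfStabilizers}(\ref{i:PCofinal}), applied with $x=y_0$, there exists $z\in\St^+(\alpha;B)\cap B_{\geq y_0}$. Since $z\geq y_0$ we have $z\in C=B'_{\geq y_0}\subseteq B'$; moreover $B_{\geq z}=C_{\geq z}=B'_{\geq z}$ because $z\in C$ and $C=B_{\geq y_0}=B'_{\geq y_0}$. Therefore $z$ future $\fF$--stabilizes $\alpha$ in $B$ precisely when it does so in $B'$, as both assertions say that $\vP(\alpha,\beta)$ is an $\fF$--equivalence for every $\beta\in\vP_{B_{\geq z}}=\vP_{B'_{\geq z}}$. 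Consequently $z\in\St^+(\alpha;B')$, so $\St^+(\alpha;B')\neq\emptyset$. As $\alpha\in\vP_A$ was arbitrary, $(A,B')$ is future stable.

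The only mildly delicate points are bookkeeping ones: first, that the sets $\St^+(\alpha;B)$ and $\St^+(\alpha;B')$ are not equal as subsets of $X$ — they sit in different ambient future connected sets — so one cannot simply assert $\St^+(\alpha;B)=\St^+(\alpha;B')$; one must pass to a common cofinal tail $B_{\geq z}=B'_{\geq z}$ where the two notions coincide verbatim. Second, one needs that such a $z$ can be found simultaneously in the stabilizer and in the "overlap" region $B_{\geq y_0}$, which is exactly what Proposition \ref{p:PropertiesOfStabilizers}(\ref{i:PCofinal}) (itself a consequence of future connectedness and part (a)) provides. I expect no real obstacle beyond keeping track of which subset each stabilizer lives in; the argument is essentially a localization-to-the-overlap maneuver.
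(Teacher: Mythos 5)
Your proposal is correct and follows essentially the same route as the paper: fix $y_0$ with $B_{\geq y_0}=B'_{\geq y_0}$, use Proposition \ref{p:PropertiesOfStabilizers}(\ref{i:PCofinal}) to find a stabilizer $z$ of $\alpha$ in $B$ lying above $y_0$, and then transfer it to $B'$ via the identification $B_{\geq z}=B'_{\geq z}$ (the paper phrases this last step as Proposition \ref{p:PropertiesOfStabilizers}(b), which is exactly your localization-to-the-tail observation).
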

\begin{proof}
	Fix $y$ such that $B_{\geq y}=B'_{\geq y}$ and assume that $(A,B)$ is a stable pair. Then for every $\alpha\in\vP_A$ there exists $x\in \St^+(\alpha;B)_{\geq y}$ (by Proposition \ref{p:PropertiesOfStabilizers}.(d)). By  \ref{p:PropertiesOfStabilizers}.(b) we have $x\in\St^+(\alpha;B_{\geq x})=\St^+(\alpha;B'_{\geq x})$ and then $x\in\St^+(\alpha;B')$.
\end{proof}

\subsection*{Stable d-path spaces}
Let $A,B\subseteq X$ be future connected subsets and assume that the pair $(A,B)$ is future $\fF$--stable.

\begin{prp}\label{p:IndependenceOfStabilizer}
	If $a\in A$ and $z,z'\in\St_\fF^+(\sigma_a,B)$, then the spaces $\vP(a,z)$ and $\vP(a,z')$ are $\fF$--equivalent.
\end{prp}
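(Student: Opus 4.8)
The plan is to connect both $\vP(a,z)$ and $\vP(a,z')$ to a single common space, namely $\vP_X(a,w)$ for a point $w\in B$ lying above both $z$ and $z'$, each by one $\fF$--equivalence, and then read off the desired zig-zag.

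First I would use that $B$ is future connected, together with $z,z'\in B$, to pick $w\in B$ with $\vP_B(z,w)\neq\emptyset\neq\vP_B(z',w)$, and fix d-paths $\gamma\in\vP_B(z,w)$ and $\gamma'\in\vP_B(z',w)$. The point that needs a small argument is that $\gamma$, a priori only known to lie in $B$, in fact lies in the subspace $B_{\geq z}$: for every $t\in[0,1]$ the map $s\mapsto\gamma(ts)$ is a non-decreasing reparametrization of $\gamma$, hence by Definition \ref{d:dSpace}(c) a d-path in $B$ from $z=\gamma(0)$ to $\gamma(t)$, so $\gamma(t)\in B_{\geq z}$; thus $\gamma\in\vP_X\cap P_{B_{\geq z}}=\vP_{B_{\geq z}}$, and likewise $\gamma'\in\vP_{B_{\geq z'}}$.

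Then, since $z\in\St^+_{\fF}(\sigma_a;B)$, by definition $B_{\geq z}$ is future $\fF$--invariant with respect to $\sigma_a$; applying this to $\gamma\in\vP_{B_{\geq z}}$ gives exactly that
\[
    \vP(\sigma_a,\gamma)\colon \vP_X(a,z)\longrightarrow \vP_X(a,w)
\]
is an $\fF$--equivalence. Running the identical argument with $z'$ and $\gamma'$ shows that $\vP(\sigma_a,\gamma')\colon\vP_X(a,z')\to\vP_X(a,w)$ is an $\fF$--equivalence as well. The zig-zag $\vP_X(a,z)\to\vP_X(a,w)\leftarrow\vP_X(a,z')$ then exhibits $\vP(a,z)$ and $\vP(a,z')$ as $\fF$--equivalent, which is the claim. (If one of these spaces is empty, the $\pi_0$--bijection forced by Definition of an equivalence system makes $\vP_X(a,w)$ empty too, and then all three are empty, so the statement still holds.)

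I do not expect a genuine obstacle: the only step requiring any care is the verification that $\gamma\in\vP_{B_{\geq z}}$, which is where the non-decreasing reparametrization axiom of d-spaces is used. It is worth noting that the standing hypothesis that the whole pair $(A,B)$ is future $\fF$--stable is not actually needed for this proposition — only future connectedness of $B$ (to produce $w$) and the assumption $z,z'\in\St^+_{\fF}(\sigma_a;B)$ (to produce the two $\fF$--equivalences) enter the argument.
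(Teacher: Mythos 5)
Your proof is correct and follows essentially the same route as the paper: both arguments use future connectedness of $B$ to produce a common point $w\in B$ above $z$ and $z'$ with connecting d-paths, and then invoke $z,z'\in\St^+_\fF(\sigma_a;B)$ to see that the two concatenation maps into $\vP_X(a,w)$ are $\fF$--equivalences, yielding the zig-zag. Your extra verification that the connecting paths lie in $\vP_{B_{\geq z}}$ (and your remark that only the stabilizer hypothesis, not full stability of $(A,B)$, is used) are correct refinements that the paper leaves implicit.
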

\begin{proof}
	Since $B$ is future connected, there exist $z''\in B_{\geq z,z'}$, $\beta\in \vP_B(z,z'')$, $\beta'\in \vP_B(z',z'')$. Both maps in the composition
	\[
		\vP_X(a,z)\xrightarrow{\vP(a,\beta)} \vP_X(a,z'')\xleftarrow{\vP(a,\beta')} \vP_X(a,z')
	\]
	are $\fF$--equivalences, since both $z$ and $z'$ stabilize $\sigma_a$ in $B$.
\end{proof}

\begin{prp}\label{p:IndependenceOfPoint}
	Let $a,a'\in A$ and let $z,z'\in B$ be stabilizers of $\sigma_a$ and $\sigma_{a'}$, respectively. Then the spaces $\vP(a,z)$ and $\vP(a',z')$ are $\fF$--equivalent.
\end{prp}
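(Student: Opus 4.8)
The plan is to reduce Proposition~\ref{p:IndependenceOfPoint} to the already-established Proposition~\ref{p:IndependenceOfStabilizer} by connecting the two base points $a,a'\in A$ through a point of $A$ that lies above both, and then transporting everything into a common stabilizer in $B$. First I would use that $A$ is future connected to pick $a''\in A$ with $\vP_A(a,a'')\neq\emptyset\neq\vP_A(a',a'')$, together with chosen d-paths $\gamma\in\vP_A(a,a'')$ and $\gamma'\in\vP_A(a',a'')$. Since $(A,B)$ is future $\fF$--stable, the d-path $\gamma\in\vP_A$ is future $\fF$--stabilized in $B$, so $\St^+_\fF(\gamma;B)\neq\emptyset$; likewise $\St^+_\fF(\gamma';B)\neq\emptyset$, and also $\St^+_\fF(\sigma_{a''};B)\neq\emptyset$.

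Next I would produce a single point of $B$ that simultaneously stabilizes all the paths in sight. By Proposition~\ref{p:PropertiesOfStabilizers}.(c), a stabilizer of $\gamma$ also stabilizes $\sigma_{\gamma(1)}=\sigma_{a''}$ and $\sigma_{\gamma(0)}=\sigma_a$, and similarly for $\gamma'$; and by Proposition~\ref{p:PropertiesOfStabilizers}.(e) the finite intersection $\St^+_\fF(\gamma,\gamma';B)=\St^+_\fF(\gamma;B)\cap\St^+_\fF(\gamma';B)$ is non-empty. Pick $w$ in this intersection; then $w$ stabilizes $\gamma$, $\gamma'$, $\sigma_a$, $\sigma_{a'}$ and $\sigma_{a''}$ in $B$. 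Now consider the maps
\[
	\vP_X(a'',w)\xrightarrow{\vP(\gamma,\sigma_w)}\vP_X(a,w),\qquad
	\vP_X(a'',w)\xrightarrow{\vP(\gamma',\sigma_w)}\vP_X(a',w);
\]
these are $\fF$--equivalences precisely because $w$ stabilizes $\gamma$ and $\gamma'$ respectively (using the characterization of stabilization via the map $\vP_X(\alpha,x)$ when the relevant future connected set, here $B_{\geq w}$, has the point $w$ as its minimum — or more directly, $w$ future $\fF$--stabilizes $\gamma$ means $B_{\geq w}$ is future $\fF$--invariant with respect to $\gamma$, and $\sigma_w\in\vP_{B_{\geq w}}$). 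Hence $\vP_X(a,w)$, $\vP_X(a'',w)$ and $\vP_X(a',w)$ are all $\fF$--equivalent.

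Finally I would glue this to the hypothesis. Since $z\in B$ is a stabilizer of $\sigma_a$ and $w\in B$ is also a stabilizer of $\sigma_a$, Proposition~\ref{p:IndependenceOfStabilizer} gives that $\vP_X(a,z)$ and $\vP_X(a,w)$ are $\fF$--equivalent; likewise $\vP_X(a',z')$ and $\vP_X(a',w)$ are $\fF$--equivalent. Chaining the zig-zag
\[
	\vP_X(a,z)\;\sim_\fF\;\vP_X(a,w)\;\sim_\fF\;\vP_X(a'',w)\;\sim_\fF\;\vP_X(a',w)\;\sim_\fF\;\vP_X(a',z')
\]
yields the claim. The only mildly delicate point — and the step I would be most careful about — is making sure the map $\vP(\gamma,\sigma_w)$ really is an $\fF$--equivalence: this needs $w\in\St^+_\fF(\gamma;B)$ to be invoked correctly, i.e. that $w$ stabilizing $\gamma$ means $B_{\geq w}$ is future $\fF$--invariant with respect to $\gamma$, applied to $\beta=\sigma_w\in\vP_{B_{\geq w}}$, which gives exactly that $\vP(\gamma,\sigma_w)=\vP(\gamma,\sigma_w):\vP_X(\gamma(1),w)\to\vP_X(\gamma(0),w)$ is an $\fF$--equivalence; everything else is bookkeeping with Proposition~\ref{p:PropertiesOfStabilizers} and the 2--out--of--3 closure of $\fF$.
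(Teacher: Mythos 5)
Your proof is correct and takes essentially the same approach as the paper: choose $a''\in A$ above $a$ and $a'$ by future connectedness, produce a common stabilizer in $B$ for the connecting paths via Proposition \ref{p:PropertiesOfStabilizers}, and chain concatenation-induced $\fF$--equivalences into a zig-zag. The only cosmetic difference is that you compare $z$ and $z'$ with your common stabilizer $w$ by invoking Proposition \ref{p:IndependenceOfStabilizer}, whereas the paper chooses the common stabilizer $z''$ in $\St^+(\alpha,\alpha';B)_{\geq z,z'}$ directly and writes out the connecting paths $\omega,\omega'$ explicitly.
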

\begin{proof}
	Choose $a''\in A_{\geq a,a'}$, $\alpha\in\vP_A(a,a'')$, $\alpha'\in \vP_A(a',a'')$; these exist since $A$ is future connected.
By \ref{p:PropertiesOfStabilizers}.(d) and (e), there exists $z''\in \St^+(\alpha,\alpha';B)_{\geq z,z'}$. Choose paths $\omega\in\vP_B(z,z'')$ and $\omega'\in\vP_B(z',z'')$. We obtain the following sequence of maps
	\begin{equation}\label{e:EquivMap}
		\vP_X(a,z)\xrightarrow{\vP(a,\omega)}
		\vP_X(a,z'') \xleftarrow{\vP(\alpha,z'')}
		\vP_X(a'',z'')\xrightarrow{\vP(\alpha',z'')}
		\vP_X(a',z'') \xleftarrow{\vP(a',\omega')}
		\vP_X(a',z'),
	\end{equation}
	which are all $\fF$--equivalences.
\end{proof}

As a consequence, we can define \emph{the future stable d-path space} from $A$ to $B$ as
\begin{equation}
	\vSP^+_\fF(A,B):=\vP_X(a,z),
\end{equation}
where $a\in A$ and $z\in \St^+_\fF(\sigma_a,B)$. By Proposition \ref{p:IndependenceOfPoint}, the space $\vSP_\fF^+(A,B)$ is well-defined up to (and only up to) $\fF$--equivalence.

\begin{df}
	A pair $(A,B)$ is \emph{totally $\fF$--stable} if:
	\begin{enumerate} [\normalfont (a)]
		\item{$(A,B)$ is future $\fF$--stable and past $\fF$--stable,}
		\item{There exist $a\in A$ and $b\in B$ such that $b\in\St^+_\fF(a;B)$ and $a\in \St^-_\fF(A;b)$. Such a pair $(a,b)$ will be called \emph{a stabilizing pair} of the totally stable pair $(A,B)$.}
	\end{enumerate}
\end{df}

For a totally stable pair $(A,B)$ and a stabilizing pair $(a\in A,b\in B)$  we have obvious $\fF$--equivalences
\begin{equation}
	\vSP^+_\fF(A,B)\buildrel{\fF}\over\simeq \vP_X(a,b) \buildrel{\fF}\over\simeq \vSP^-_\fF(A,B).
\end{equation}

Next, we formulate some properties of stable pairs. The first proposition is obvious:

\begin{prp}\label{p:PropertiesOfStabilizingPairs}
	Let $(A,B)$ be a totally $\fF$--stable pair and let $(a\in A,b\in B)$ be a stabilizing pair.
	\begin{enumerate}[\normalfont (a)]
	\item{If $\alpha\in \vP_{A_{\leq a}}$ and $\beta\in \vP_{B_{\geq b}}$, then $\vP(\alpha,\beta)$ is an $\fF$--equivalence.}
	\item{If $a'\in A_{\leq a}$, $b'\in B_{\geq b}$, then $(a',b')$ is a stabilizing pair of $(A,B)$.\qed}
	\end{enumerate}	
\end{prp}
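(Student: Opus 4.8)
I would prove (a) first; (b) then follows immediately. Indeed, assuming (a), let $a'\in A_{\leq a}$ and $b'\in B_{\geq b}$. Since $a'\leq a$ and $b'\geq b$ we have $A_{\leq a'}\subseteq A_{\leq a}$ and $B_{\geq b'}\subseteq B_{\geq b}$; applying (a) to the pairs $(\alpha,\sigma_{b'})$ with $\alpha\in\vP_{A_{\leq a'}}\subseteq\vP_{A_{\leq a}}$, and to $(\sigma_{a'},\beta)$ with $\beta\in\vP_{B_{\geq b'}}\subseteq\vP_{B_{\geq b}}$, shows that $\vP(\alpha,\sigma_{b'})$ and $\vP(\sigma_{a'},\beta)$ are $\fF$--equivalences, i.e.\ $a'\in\St^-_\fF(A;b')$ and $b'\in\St^+_\fF(\sigma_{a'};B)$. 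Hence $(a',b')$ is a stabilizing pair of $(A,B)$.

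For (a), fix $\alpha\in\vP_{A_{\leq a}}$ and $\beta\in\vP_{B_{\geq b}}$. Writing $\vP(\alpha,\beta)\simeq\vP(\alpha,\sigma_{\beta(1)})\circ\vP(\sigma_{\alpha(1)},\beta)$ (associativity of concatenation up to reparametrization), the $2$-out-of-$3$ property reduces the problem to moving one endpoint at a time, i.e.\ to the two statements: $(+)$ for every $x\in A_{\leq a}$ the point $b$ future $\fF$--stabilizes $\sigma_x$ in $B$; and $(-)$ for every $y\in B_{\geq b}$ the point $a$ past $\fF$--stabilizes $\sigma_y$ in $A$. Given these, $\vP(\sigma_{\alpha(1)},\beta)$ is an $\fF$--equivalence by $(+)$ at $x=\alpha(1)$ (as $\beta\in\vP_{B_{\geq b}}$) and $\vP(\alpha,\sigma_{\beta(1)})$ by $(-)$ at $y=\beta(1)$ (as $\alpha\in\vP_{A_{\leq a}}\subseteq\vP_{A_{\leq\alpha(1)}}$). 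Note that the extreme instances $x=a$ of $(+)$ and $y=b$ of $(-)$ hold by hypothesis: they are exactly the two conditions defining the stabilizing pair $(a,b)$.

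To obtain $(+)$ (and, symmetrically, $(-)$): for $x\in A_{\leq a}$ pick a directed path $\gamma$ from $x$ to $a$ lying in $A_{\leq a}$ (every point of a directed path from $x$ to $a$ in $X$ is $\leq a$, so it suffices that such a path lie in $A$, which holds as $A$ is d-convex). For $\beta\in\vP_{B_{\geq b}}$ compare $\vP(\sigma_x,\beta)$ with $\vP(\sigma_a,\beta)$ in the homotopy-commutative square whose horizontal maps are $\vP(\sigma_a,\beta)$ and $\vP(\sigma_x,\beta)$ and whose vertical maps are $\vP(\gamma,\sigma_{\beta(0)})$ and $\vP(\gamma,\sigma_{\beta(1)})$: the top map is an $\fF$--equivalence because $b\in\St^+_\fF(\sigma_a;B)$, the vertical maps are $\fF$--equivalences once $(-)$ is known at $\beta(0),\beta(1)\in B_{\geq b}$ (using $\gamma\in\vP_{A_{\leq a}}$), and then $2$-out-of-$3$ gives $(+)$ at $x$. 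Throughout one uses Proposition \ref{p:PropertiesOfStabilizers} (notably parts (b), (c), (d), (e)) and the homotopy invariance of the maps $\vP(-,-)$.

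The main obstacle is the circular-looking dependence this creates: $(+)$ at a point of $A_{\leq a}$ has been reduced to $(-)$ at points of $B_{\geq b}$, and dually. Breaking it is the heart of the proof. I would try to do so by running the two inductions simultaneously, starting from the boundary cases $x=a$, $y=b$: using that $A_{\leq a}$ is d-convex with maximum $a$ and $B_{\geq b}$ d-convex with minimum $b$, replace $\beta$ by $\beta*\rho$ where $\rho$ pushes $\beta(1)$ up to some $c\in\St^+_\fF(\gamma;B)$ with $c\geq\beta(1)$ (which exists by future stability of $(A,B)$ and, by Proposition \ref{p:PropertiesOfStabilizers}.(c),(d),(e), also lies in $\St^+_\fF(\sigma_x;B)\cap\St^+_\fF(\sigma_a;B)$); then the right-hand vertical map becomes $\vP(\gamma,\sigma_c)$, automatically an $\fF$--equivalence because $c$ stabilizes $\gamma$, and only maps into the already-stabilized level $c$ remain to be controlled; the dual move via $\St^-_\fF(A;-)$ handles the left-hand side. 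I expect organizing these common stabilizers --- and checking that all the intervening squares genuinely commute up to homotopy --- to be the only non-routine point; everything else is bookkeeping with the $2$-out-of-$3$ property and Proposition \ref{p:PropertiesOfStabilizers}.
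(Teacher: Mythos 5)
Your reduction of (b) to (a), and of (a) to the two one-sided statements $(+)$ and $(-)$, is fine, and you are right that the extreme cases $x=a$, $y=b$ are exactly the definition of a stabilizing pair. (The paper itself gives no argument here: the proposition is declared obvious, so there is no written proof to compare with.) But your attempt does not close the one step you yourself call the heart of the matter, and the mechanism you sketch for it does not work as described. Appending $\rho$ so as to push $\beta(1)$ up to $c\in\St^+_\fF(\gamma;B)$ does make $\vP(\gamma,\sigma_c)$, and hence the diagonal $\vP(\gamma,\beta*\rho)\simeq\vP(\gamma,\sigma_c)\circ\vP(\sigma_a,\beta*\rho)$, an $\fF$--equivalence; but what 2--out--of--3 then yields are only composites such as $\vP(\sigma_x,\beta*\rho)\circ\vP(\gamma,\sigma_{\beta(0)})$. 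To isolate the map you actually want, you still need either the left-hand vertical $\vP(\gamma,\sigma_{\beta(0)})$ --- an unproved $(-)$--instance at an intermediate point --- or the map $\vP(\sigma_x,\rho)$, and the latter is not ``into an already-stabilized level'' in any usable sense: $c\in\St^+_\fF(\sigma_x;B)$ only controls paths lying in $B_{\geq c}$, whereas $\rho$ starts strictly below $c$. The ``dual move'' has an additional order-of-choices problem: the past stabilizer $w$ can only be chosen after $\rho$ (hence after $c$), so one cannot also arrange $c$ to stabilize the connecting path coming up from $w$; and since a general d-space carries no well-founded measure on points, ``running the two inductions simultaneously'' is not an induction. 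So the circular dependence you correctly identified is still unbroken at the end of your proposal.

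A second, independent problem is your appeal to d-convexity of $A$ (and, implicitly, of $B$ when you produce paths from $b$ to $\beta(0)$ inside $B_{\geq b}$) to obtain the connecting path $\gamma\in\vP_{A_{\leq a}}$ from $x$ to $a$. D-convexity is not among the hypotheses: a totally $\fF$--stable pair is only assumed future and past connected, and connectedness does not provide a d-path ending exactly at $a$ (nor one starting exactly at $b$). In the places where the paper uses this proposition (Propositions \ref{p:CommonStabilizingPairs} and \ref{p:FPEq}) the sets are components of a component system and therefore d-convex, but the statement under review is for arbitrary totally stable pairs. As it stands, then, your text is a sensible plan with the decisive step (and one of its standing assumptions) missing, not a proof.
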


\begin{prp}\label{p:CommonStableTraces}
	Let $(A,B)$ and $(A',B')$ be future stable pairs. Assume that $A\cap A'\neq \emptyset$ and that $B$ and $B'$ are cofinal. Then $\vSP^+(A,B)$ and $\vSP^+(A',B')$ are $\fF$--equivalent.
\end{prp}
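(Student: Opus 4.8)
The plan is to reduce both stable d-path spaces to directed path spaces $\vP_X(a,z)$ and $\vP_X(a,z')$ sharing a common source $a\in A\cap A'$ and having targets $z,z'$ that are \emph{simultaneously} stabilizers of $\sigma_a$ in $B$ and in $B'$; Proposition~\ref{p:IndependenceOfStabilizer} will then identify these two path spaces up to $\fF$--equivalence. Concretely, I would fix $a\in A\cap A'$; note that $\sigma_a\in\vP_A\cap\vP_{A'}$, so future stability of $(A,B)$ and of $(A',B')$ gives $\St^+(\sigma_a;B)\neq\emptyset$ and $\St^+(\sigma_a;B')\neq\emptyset$. Cofinality of $B$ and $B'$ (Definition~\ref{d:Cofinal}) supplies $y\in B\cap B'$ with $B_{\geq y}=B'_{\geq y}$, and then Proposition~\ref{p:PropertiesOfStabilizers}.(d) lets me choose $z\in\St^+(\sigma_a;B)$ with $z\geq y$ and $z'\in\St^+(\sigma_a;B')$ with $z'\geq y$.

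The key step is transferring these stabilizers across $B$ and $B'$. For every point $w\geq y$ one has $w\in B\iff w\in B_{\geq y}=B'_{\geq y}\iff w\in B'$; taking $w=z$ and $w=z'$ shows $z,z'\in B\cap B'$, and ranging over all $w\geq z$ (respectively $w\geq z'$) shows $B_{\geq z}=B'_{\geq z}$ and $B_{\geq z'}=B'_{\geq z'}$. By Proposition~\ref{p:PropertiesOfStabilizers}.(b), membership in $\St^+(\sigma_a;-)$ depends only on the upward part of the set, so $z\in\St^+(\sigma_a;B)$ is equivalent to $z\in\St^+(\sigma_a;B_{\geq z})=\St^+(\sigma_a;B'_{\geq z})$, which is equivalent to $z\in\St^+(\sigma_a;B')$; symmetrically $z'\in\St^+(\sigma_a;B)$. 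Thus $z,z'\in\St^+(\sigma_a;B')$.

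Finally, applying Proposition~\ref{p:IndependenceOfStabilizer} to the future stable pair $(A',B')$ with source $a\in A'$ and stabilizers $z,z'\in\St^+(\sigma_a;B')$ yields $\vP_X(a,z)\buildrel{\fF}\over\simeq\vP_X(a,z')$. Combining this with the defining $\fF$--equivalences $\vSP^+(A,B)\buildrel{\fF}\over\simeq\vP_X(a,z)$ (valid since $a\in A$ and $z\in\St^+(\sigma_a;B)$) and $\vSP^+(A',B')\buildrel{\fF}\over\simeq\vP_X(a,z')$ (valid since $a\in A'$ and $z'\in\St^+(\sigma_a;B')$), which are guaranteed by Proposition~\ref{p:IndependenceOfPoint}, finishes the argument. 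I do not expect a real obstacle here: the proof is short, and the only point requiring care — which is exactly what parts (b) and (d) of Proposition~\ref{p:PropertiesOfStabilizers} are designed to handle — is arranging the chosen stabilizers to lie above the cofinality witness $y$, so that being a stabilizer in $B$ and being a stabilizer in $B'$ become the same condition.
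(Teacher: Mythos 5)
Your proof is correct and takes essentially the same route as the paper: fix $a\in A\cap A'$, use the cofinality witness together with Proposition \ref{p:PropertiesOfStabilizers}.(d) to find a stabilizer of $\sigma_a$ above that witness, and use Proposition \ref{p:PropertiesOfStabilizers}.(b) to transfer stabilizer membership between $B$ and $B'$. The only (harmless) redundancy is that your $z$ already lies in $\St^+_\fF(\sigma_a;B)\cap\St^+_\fF(\sigma_a;B')$, so $\vSP^+_\fF(A,B)\buildrel{\fF}\over\simeq\vP_X(a,z)\buildrel{\fF}\over\simeq\vSP^+_\fF(A',B')$ follows at once, and the second point $z'$ together with the appeal to Proposition \ref{p:IndependenceOfStabilizer} can be dropped, which is exactly how the paper argues.
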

\begin{proof}
	Choose $a\in A\cap A'$ and $x\in B\cap B'$ such that $B_{\geq x}=B'_{\geq x}$. There exists $y\in \St^+_\fF(\sigma_a;B)_{\geq x}$ (\ref{p:PropertiesOfStabilizers}.(d)) and, by \ref{p:PropertiesOfStabilizers}.(b),
	\[
		y\in \St^+_{\fF}(\sigma_a;B) \Leftrightarrow y\in \St^+_{\fF}(\sigma_a;B_{\geq y}) \Leftrightarrow y\in \St^+_{\fF}(\sigma_a;B').
	\]
	Thus,
	\[
		\vSP^+_\fF(A,B)\overset{\fF}{\simeq} \vP(a,y) \overset{\fF}{\simeq} \vSP^+_\fF(A',B').\qedhere
	\]
\end{proof}

\begin{prp}\label{p:TraceDominates}
	Let $(A,B)$ be a future $\fF$--stable pair. Then the following conditions are equivalent:
	\begin{enumerate}[\normalfont (a)]
	\item{$\vSP^+_\fF(A,B)\neq\emptyset$,}
	\item{$\vP(A,B)\neq \emptyset$,}
	\item{$\vP(x,B)\neq \emptyset$ for every $x\in A$.}
	\end{enumerate}
\end{prp}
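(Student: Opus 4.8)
The plan is to establish the cycle of implications (a) $\Rightarrow$ (b) $\Rightarrow$ (c) $\Rightarrow$ (a), using a single device throughout: if a concatenation map $\vP(\alpha,\beta)$ is an $\fF$--equivalence then, since every $\fF$--equivalence is a bijection on $\pi_0$, its domain $\vP_X(\alpha(1),\beta(0))$ is non-empty exactly when its codomain $\vP_X(\alpha(0),\beta(1))$ is. In other words, a stabilizer of a path $\alpha$ transports ``reachability'' both forwards and backwards along $\alpha$. The implication (a) $\Rightarrow$ (b) then needs no work: writing $\vSP^+_\fF(A,B)=\vP_X(a,z)$ with $a\in A$ and $z\in\St^+_\fF(\sigma_a;B)\subseteq B$, any element of this space is a d-path from a point of $A$ to a point of $B$, hence lies in $\vP(A,B)$.

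For (b) $\Rightarrow$ (c) I would argue as follows. Choose $a_0\in A$ and $b_0\in B$ with $\vP_X(a_0,b_0)\neq\emptyset$, and fix an arbitrary $x\in A$. Future connectedness of $A$, applied to $x$ and $a_0$, gives $a''\in A$ together with $\gamma\in\vP_A(a_0,a'')$ and $\delta\in\vP_A(x,a'')$; in particular $x\leq a''$. Future $\fF$--stability of $(A,B)$, applied to $\gamma\in\vP_A$, yields $z\in\St^+_\fF(\gamma;B)\subseteq B$, so $B_{\geq z}$ is future $\fF$--invariant with respect to $\gamma$. Now apply future connectedness of $B$ to $z$ and $b_0$: this produces $b_1\in B$ with $\beta\in\vP_B(z,b_1)$ and $\beta_0\in\vP_B(b_0,b_1)$. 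The path $\beta$ lies in $B_{\geq z}$, since each of its points is in $B$ and is $\geq\beta(0)=z$, so $\beta$ is an admissible test path and $\vP(\gamma,\beta)\colon\vP_X(a'',z)\to\vP_X(a_0,b_1)$ is an $\fF$--equivalence. Its codomain is non-empty, as one sees by concatenating a d-path $a_0\to b_0$ with $\beta_0$; hence $\vP_X(a'',z)\neq\emptyset$, i.e.\ $a''\leq z$. Since $z\in B$ and $x\leq a''\leq z$, we get $\vP(x,B)\neq\emptyset$, and $x$ was arbitrary.

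The implication (c) $\Rightarrow$ (a) is the same computation with the constant path $\sigma_a$ in place of $\gamma$. Pick $a\in A$; by (c) there is $b_0\in B$ with $\vP_X(a,b_0)\neq\emptyset$, and by future $\fF$--stability there is $z\in\St^+_\fF(\sigma_a;B)$, so $\vP_X(a,z)$ is a model of $\vSP^+_\fF(A,B)$ --- and whether it is non-empty is independent of the choices by Proposition \ref{p:IndependenceOfPoint}. Choosing $b_1\in B$ with $\beta\in\vP_B(z,b_1)$ and $\beta_0\in\vP_B(b_0,b_1)$ as above, the invariance of $B_{\geq z}$ with respect to $\sigma_a$ makes $\vP(\sigma_a,\beta)\colon\vP_X(a,z)\to\vP_X(a,b_1)$ an $\fF$--equivalence whose codomain is non-empty; therefore $\vSP^+_\fF(A,B)=\vP_X(a,z)\neq\emptyset$.

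I do not anticipate a genuine difficulty: the statement is a bookkeeping exercise organized around the $\pi_0$--bijection axiom of an equivalence system. The points that require attention are (i) checking that the connecting path $\beta$ really lies in $B_{\geq z}$, so that it is eligible to witness the future $\fF$--invariance of $B_{\geq z}$; (ii) keeping track of which future connected set serves as the ambient set in each stabilizer, and of the four endpoints $\alpha(0),\alpha(1),\beta(0),\beta(1)$ when unwinding $\vP(\alpha,\beta)$ (here one genuinely needs $\gamma\in\vP_A$, which is why future connectedness of $A$ is invoked for paths inside $A$); and (iii) the trivial degenerate case $A=\emptyset$, which should be excluded or dispatched at the outset.
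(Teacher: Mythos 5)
Your proof is correct. It differs from the paper's argument mainly in the direction of the cycle and in the mechanism of the key step. The paper proves (b)$\Rightarrow$(a) directly: given a d-path from $x\in A$ to $y\in B$, it uses cofinality of the stabilizer (Proposition \ref{p:PropertiesOfStabilizers}.(\ref{i:PCofinal})) to choose $y'\in\St^+_\fF(\sigma_x;B)_{\geq y}$ and simply extends the given path to $y'$, so the model $\vP(x,y')$ of $\vSP^+_\fF(A,B)$ visibly contains a point; then (a)$\Rightarrow$(c) is immediate because every $x\in A$ has its own model $\vP(x,y)$ with $y\in\St^+_\fF(\sigma_x;B)$, and (c)$\Rightarrow$(b) is trivial. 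In particular the paper only ever invokes stabilizers of constant paths. You instead make (b)$\Rightarrow$(c) the substantive step and pull non-emptiness backwards along the $\fF$--equivalence $\vP(\gamma,\beta)$ via the $\pi_0$--bijection axiom; this forces you to use stability with respect to the non-constant path $\gamma\in\vP_A$ and future connectedness of both $A$ and $B$, but it is a perfectly valid and roughly equally short alternative, trading the extend-the-path/cofinality device for backward transport of reachability. Your attention points are well taken and correctly resolved: $\beta$ does lie in $\vP_{B_{\geq z}}$ because every point of $\beta$ is in $B$ and $\geq\beta(0)=z$, and the degenerate case $A=\emptyset$ (under which $\vSP^+_\fF(A,B)$ is not even defined) is left implicit in the paper as well.
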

\begin{proof}
	If $x\in A$, $y\in B$ and $\vP(x,y)\neq\emptyset$, then every path $\alpha\in\vP(x,y)$ extends to a path $\alpha'\in\vP(x,y')$ for some $y'\in \St^+(\sigma_x;B)_{\geq y}$;  the stabilizer $\St^+(\sigma_x;B)_{\geq y}$ is non-empty by \ref{p:PropertiesOfStabilizers}.(\ref{i:PCofinal}). Thus, $\vSP^+_\fF(A,B)\overset{\fF}{\simeq} \vP(x,y')\neq\emptyset$ and we have (b)$\Rightarrow$(a). Every $x\in A$ admits a point $y\in B$ such that $\vP(x,y)\overset{\fF}{\simeq}\vSP^+_\fF(A,B)$; thus, (a) implies (c). The implication (c)$\Rightarrow$(b) is obvious.
\end{proof}

\subsection*{Stable component systems}

\begin{df}
	A future connected subset $A\subseteq X$ is \emph{future $\fF$--trivial} if the pair $(A,A)$ is future stable and $\vSP^+_\fF(A,A)$ is $\fF$--contractible.
\end{df}

\begin{df}\label{d:StableComponentSystem}
	A family $\cA=\{A_i\}_{i\in I}$ of pairwise disjoint subsets of $X$ such that $\bigcup_{i\in I} A_i=X$ is \emph{a stable future (resp. past) $\fF$--component system} if
	\begin{enumerate}[\normalfont (a)]
	\item{$A_i$ is d-convex for every $i\in I$,}
	\item{$A_i$ is future (resp. past) connected for every $i\in I$,}
	\item{$A_i$ is future (resp. past) $\fF$--trivial for every $i\in I$,}
	\item{the pair $(A_i,A_j)$ is future (resp. past) $\fF$--stable for every $i,j\in I$.}
	\end{enumerate}	
	The stable future (resp. past) $\fF$--component system $\cA$ is \emph{closed} if every $A_i$ has a final (resp. initial) point.
\end{df}
We will occasionally skip the adjective "stable", since this is the only kind of component systems considered in this paper.

\begin{rem}
	In most cases, the assumption that $\vSP^+_\fF(A,A)$ is $\fF$--contractible is obsolete. But not always. Let $X=\R\times S^1$ and consider the d-structure on $X$ given by the following condition: the path $\alpha=(\alpha_\R,\alpha_{S^1})\in P_X$ is a d-path if $\alpha_\R$ is non-decreasing, and $\alpha_\R(s)=\alpha_\R(t)$ implies $\alpha_{S^1}(s)=\alpha_{S^1}(t)$. We have
\begin{equation*}
	\vP_X((a,x),(b,y))\simeq
	\begin{cases}
		P_{S^1}(x,y)\simeq \Z & \text{for $a<b$,}\\
		\{*\} & \text{for $(a,x)=(b,y)$,}\\
		\emptyset & \text{for $a\geq b$, $(a,x)\neq(b,y)$.}
	\end{cases}
\end{equation*}	
	 Thus, $X$ has no loops, the pair $(X,X)$ is future stable (for any $\fF$) and $\vSP^+_\fF(X,X)$ is $\fF$--equivalent to a countable discrete space, which is not $\fF$--contractible.
\end{rem}

\begin{df}
	\emph{A stable total $\fF$--component system on $X$} is a family $\cA=\{A_i\}_{i\in I}$ that is both a stable future and a stable past component system, and such that every pair $(A_i,A_j)$ is totally $\fF$--stable.
\end{df}

\begin{prp}\label{p:CommonStabilizingPairs}
	If $\{A_i\}_{i\in I}$ is a finite total component system on $X$, then there exist $a_i,b_i\in A_i$, $i\in I$, such that for every $i,j\in I$, $(a_i,b_j)$ is a stabilizing pair of $(A_i,A_j)$.
\end{prp}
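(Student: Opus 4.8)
The idea is to choose the points $a_i$ and $b_i$ in two stages, exploiting finiteness to take finitely many intersections of stabilizers, each of which is nonempty and cofinal by Proposition \ref{p:PropertiesOfStabilizers}. First I would pick, for each $i\in I$, an arbitrary base point $p_i\in A_i$, together with d-paths witnessing future/past connectedness as needed. The plan is to produce $b_j\in A_j$ that simultaneously future-$\fF$-stabilizes $\sigma_{p_i}$ in $A_j$ for \emph{all} $i\in I$, and dually to produce $a_i\in A_i$ that simultaneously past-$\fF$-stabilizes $\sigma_{p_j}$ in $A_i$ for all $j\in I$; then I would upgrade from the constant paths $\sigma_{p_i}$ to the actual connecting d-paths using Proposition \ref{p:PropertiesOfStabilizingPairs}.(b), which says that moving $a$ down in $A$ and $b$ up in $B$ preserves being a stabilizing pair.

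In more detail: fix $j$. For each $i$, since $(A_i,A_j)$ is future $\fF$--stable and $A_i$ is future connected, any d-path $\alpha_{ij}\in\vP_{A_i}$ future $\fF$--stabilizes in $A_j$; in particular $\St^+_\fF(\sigma_{p_i};A_j)\neq\emptyset$. Here I actually need a single d-path in $A_i$ whose stabilizer controls all of $A_i$: because $A_i$ is future connected but need not have a maximal point, I cannot simply take "the top of $A_i$". Instead, for the finitely many pairs $(i,j)$ I will run the construction through the constant paths at carefully chosen points. Choose $b_j^{(0)}\in\St^+_\fF(\sigma_{p_i};A_j)$ for each $i$; by Proposition \ref{p:PropertiesOfStabilizers}.(d) each $\St^+_\fF(\sigma_{p_i};A_j)$ is cofinal in $A_j$, so since $A_j$ is future connected and $I$ is finite, I can find a single $b_j\in A_j$ lying in $\bigcap_{i\in I}\St^+_\fF(\sigma_{p_i};A_j)$ — first pick an upper bound $z$ in $A_j$ of finitely many representatives, then invoke \ref{p:PropertiesOfStabilizers}.(a) so that $z$, and hence any point of $A_j$ above $z$, stabilizes each $\sigma_{p_i}$. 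Dually, choose $a_i\in A_i$ lying in $\bigcap_{j\in I}\St^-_\fF(A_i;\sigma_{p_j})$.

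Now $(p_i,b_j)$ is a pair with $b_j\in\St^+_\fF(\sigma_{p_i};A_j)$ and $a_i\in\St^-_\fF(A_i;\sigma_{p_j})$ — but I need the \emph{same} pair $(a_i,b_j)$ to be a stabilizing pair of $(A_i,A_j)$, i.e., $b_j\in\St^+_\fF(a_i;A_j)$ \emph{and} $a_i\in\St^-_\fF(A_i;b_j)$ with $a_i,b_j$ the chosen points. This is where I connect $p_i$ to $a_i$: if $\vP_{A_i}(a_i,p_i)\neq\emptyset$ I would pick a connecting path and use Proposition \ref{p:PropertiesOfStabilizingPairs}.(a) (with the past version) to transport the stabilizing property, but since $A_i$ is only future connected I instead choose $a_i$ from the start inside $A_i$ so that, via future connectedness, there is $q_i\in A_i$ with $\vP_{A_i}(a_i,q_i)\neq\emptyset\neq\vP_{A_i}(p_i,q_i)$, and re-run the stabilizer construction with $\sigma_{q_i}$ in place of $\sigma_{p_i}$; by \ref{p:PropertiesOfStabilizers}.(c) a stabilizer of a path stabilizes its endpoints, and by monotonicity \ref{p:PropertiesOfStabilizers}.(a) I may enlarge $b_j$ once more so that $b_j\in\St^+_\fF(\sigma_{q_i};A_j)$ for all $i,j$. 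Then $b_j\in\St^+_\fF(\sigma_{a_i};A_j)$ because $b_j$ stabilizes $\sigma_{q_i}$ and $a_i\le q_i$ inside $A_i$, using \ref{p:PropertiesOfStabilizers}.(c) applied to the connecting path in $A_i$ from $a_i$; symmetrically $a_i\in\St^-_\fF(A_i;b_j)$. Finitely many enlargements of $a_i$ (downward) and $b_j$ (upward) suffice because $I$ is finite. The main obstacle is exactly this bookkeeping: keeping track of which constant paths each $b_j$ must stabilize while simultaneously moving the $a_i$'s, and making sure the finitely many cofinal intersections in each $A_j$ (and coinitial intersections in each $A_i$) can be realized by single points — which is guaranteed by future/past connectedness together with Proposition \ref{p:PropertiesOfStabilizers}.(a),(d) and their past analogues.
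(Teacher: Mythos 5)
There is a genuine gap. Your construction never uses clause (b) of the definition of a totally $\fF$--stable pair -- the hypothesis that each pair $(A_i,A_j)$ already possesses \emph{some} stabilizing pair $(a_{ij},b_{ij})$ -- and instead tries to manufacture stabilizing pairs out of future and past stability of the pairs alone, via stabilizers of constant paths at arbitrary base points $p_i$. That cannot work as written: the step ``$b_j\in\St^+_\fF(\sigma_{a_i};A_j)$ because $b_j$ stabilizes $\sigma_{q_i}$ and $a_i\le q_i$, using \ref{p:PropertiesOfStabilizers}.(c)'' misreads that proposition. Part (c) gives the inclusion $\St^+(\eta;A_j)\subseteq\St^+(\sigma_{\eta(0)};A_j)\cap\St^+(\sigma_{\eta(1)};A_j)$ for a connecting path $\eta\in\vP_{A_i}(a_i,q_i)$; so to conclude that $b_j$ stabilizes $\sigma_{a_i}$ you would need $b_j\in\St^+(\eta;A_j)$, whereas you only know $b_j\in\St^+(\sigma_{q_i};A_j)$, and the latter does not contain the former -- the inclusion goes the wrong way. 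Stabilization of the constant path is in general not inherited when the point is moved down (that is exactly the phenomenon in Dubut's example). Patching this by choosing $b_j$ to stabilize the connecting paths runs into a circular dependency: $b_j$ must stabilize $\sigma_{a_i}$ while $a_i$ must (past-)stabilize $\sigma_{b_j}$, and each enlargement of $b_j$ destroys the property you secured for $a_i$, so the finitely-many-corrections bookkeeping you describe does not terminate. Indeed, if future plus past stability alone implied the existence of a stabilizing pair, condition (b) in the definition of a totally stable pair would be redundant; it is stated separately precisely because it is an extra hypothesis, and it is the hypothesis this proposition actually needs.

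The intended argument is much shorter and starts from that hypothesis: for every $i,j$ pick a stabilizing pair $(a_{ij},b_{ij})$ of $(A_i,A_j)$, which exists because the system is total; since each $A_i$ is past connected, each $A_j$ is future connected and $I$ is finite, choose $a_i\in A_i$ with $a_i\leq a_{ij}$ for all $j$ and $b_j\in A_j$ with $b_j\geq b_{ij}$ for all $i$; then Proposition \ref{p:PropertiesOfStabilizingPairs}.(b) (moving $a$ down in $A_i$ and $b$ up in $A_j$ preserves being a stabilizing pair) gives that every $(a_i,b_j)$ is a stabilizing pair. Your use of connectedness and finiteness to find common lower and upper bounds is exactly right; the missing move is to apply it to the already-given stabilizing pairs rather than to stabilizers of constant paths at unrelated base points.
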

\begin{proof}
	Choose stabilizing pairs $(a_{ij},b_{ij})$ for every pair $(A_i,A_j)$. Since all components are future and past connected and there is a finite number of components, there exist points $a_i\in A_i$ and $b_j\in A_j$ such that $a_i\leq a_{ij}$ and $b_j\geq b_{ij}$ for all $i,j$. The conclusion follows from Proposition \ref{p:PropertiesOfStabilizingPairs}.(b).
\end{proof}

\begin{prp}\label{p:ComponentsSystemPartialOrder}
	Let $\{A_i\}_{i\in I}$ be a future $\fF$--component system on $X$. Then the relation on $I$
	\[
		i\leq j \; \Leftrightarrow \; \vSP^+_\fF(A_i,A_j)\neq \emptyset \; \buildrel{(\ref{p:TraceDominates})} \over\Leftrightarrow \; \vP_X(A_i,A_j)\neq \emptyset
	\]
	is a partial order.
\end{prp}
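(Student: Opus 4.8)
The plan is to verify the three axioms of a partial order on $I$: reflexivity, antisymmetry, and transitivity, using the characterization of $i\leq j$ given in the statement together with earlier results, chiefly Proposition~\ref{p:TraceDominates}.

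\emph{Reflexivity.} Each $A_i$ is future connected, hence nonempty, so picking any $a\in A_i$ gives $\sigma_a\in\vP_X(A_i,A_i)$, so $\vP_X(A_i,A_i)\neq\emptyset$; by Proposition~\ref{p:TraceDominates} this is equivalent to $\vSP^+_\fF(A_i,A_i)\neq\emptyset$, i.e.\ $i\leq i$.

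\emph{Transitivity.} Suppose $i\leq j$ and $j\leq k$. By the stated equivalence, $\vP_X(A_i,A_j)\neq\emptyset$ and $\vP_X(A_j,A_k)\neq\emptyset$, so there are d-paths $\alpha\in\vP_X(A_i,A_j)$ and $\beta\in\vP_X(A_j,A_k)$. These need not concatenate directly (the endpoint $\alpha(1)\in A_j$ need not equal $\beta(0)\in A_j$), so I would instead argue via Proposition~\ref{p:TraceDominates}.(c): from $j\leq k$ we get $\vP_X(x,A_k)\neq\emptyset$ for \emph{every} $x\in A_j$, in particular for $x=\alpha(1)$. Pick $\beta'\in\vP_X(\alpha(1),A_k)$; then $\alpha*\beta'\in\vP_X(A_i,A_k)$, so $\vP_X(A_i,A_k)\neq\emptyset$ and hence $i\leq k$.

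\emph{Antisymmetry.} This is the main obstacle and uses that $X$ has no loops. Suppose $i\leq j$ and $j\leq i$ with $i\neq j$. Then there exist $\alpha\in\vP_X(A_i,A_j)$ and, using Proposition~\ref{p:TraceDominates}.(c) applied to $j\leq i$ at the point $\alpha(1)\in A_j$, a path $\beta\in\vP_X(\alpha(1),A_i)$; set $x=\alpha(0)\in A_i$ and $x'=\beta(1)\in A_i$. Now repeat: applying (c) for $i\leq j$ at $x'$ gives a path $\gamma\in\vP_X(x',A_j)$, and we keep alternating. To get a contradiction I want to close up a loop. The clean way is: since $i\leq j$ and $j\leq i$, for every $a\in A_i$ there is a path out of $a$ into $A_j$ and thence (by (c) again) into $A_i$; this shows $A_i$ is ``future connected to itself through $A_j$''. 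Concretely, take $a\in A_i$; get $\alpha\in\vP_X(a,A_j)$ with $\alpha(1)=b\in A_j$; get $\beta\in\vP_X(b,A_i)$ with $\beta(1)=a'\in A_i$; then $\alpha*\beta\in\vP_X(a,a')$ with $a,a'\in A_i$, and since $A_i$ is d-convex, $\alpha*\beta$ lies entirely in $A_i$ — but $\alpha*\beta$ passes through $b\in A_j$, and $A_i\cap A_j=\emptyset$, a contradiction. So $i=j$.

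I should double-check one subtlety in the antisymmetry step: d-convexity of $A_i$ says every d-path with \emph{both} endpoints in $A_i$ lies in $A_i$, which is exactly what applies to $\alpha*\beta\in\vP_X(a,a')$. The intermediate point $b=\alpha(1)$ lies on this d-path, so $b\in A_i$; combined with $b\in A_j$ and disjointness this is the contradiction. Thus all three axioms hold and $\leq$ is a partial order on $I$. (One could phrase transitivity and antisymmetry uniformly by first noting, via Proposition~\ref{p:TraceDominates}, that $i\leq j$ is equivalent to: for every $x\in A_i$ there is a d-path from $x$ into $A_j$; I would use this reformulation throughout to keep the argument crisp.)
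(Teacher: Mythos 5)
Your proof is correct and follows essentially the same route as the paper: transitivity by extending a path through the point $\alpha(1)\in A_j$ via Proposition~\ref{p:TraceDominates}, and antisymmetry by observing that a d-path from $A_i$ through $A_j$ back into $A_i$ must lie entirely in $A_i$ by d-convexity, contradicting disjointness of the components. (Incidentally, despite your remark, the no-loops hypothesis is never actually invoked here -- d-convexity and disjointness do all the work, exactly as in the paper's argument.)
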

\begin{proof}
	The reflexivity of $\leq$ is obvious. If $i\leq j\leq k\in I$, then there exists a path $\alpha\in \vP(A_i,y)$, for some $y\in A_j$ and, by \ref{p:TraceDominates}, a path $\beta\in \vP(y,A_k)$. From the existence of $\alpha*\beta$ we conclude that $i\leq k$ which proves that the relation $\leq$ is transitive. If $i=k$, then also $j=i$, since $A_i=A_k$ is d-convex, which proves the antisymmetry.
\end{proof}

\section{The coarsest component systems}

Fix a d-space $X$ with no loops and an equivalence system $\fF$. We say that a stable future (resp. past, total) $\fF$--component system $\cA$ on $X$  is \emph{coarser} than $\cB$ if for every $B\in \cB$ there exists $A\in \cA$ such that $B\subseteq A$. The relation of being coarser is a partial order on the set of all future (resp. past, total) $\fF$--component systems on $X$.
It is not, in general, true that $X$ admits \emph{a coarsest future (past, total) $\fF$--component system}, i.e., the system that is coarser than any other system. However, if $X$ admits any finite future (past, total) $\fF$--component system, then a coarsest system on $X$ exists and this will be shown in this section.

Let $\cA=\{A_i\}_{i\in I}$ and $\cB=\{B_j\}_{j\in J}$ be finite stable future $\fF$--component systems on $X$.
Let $\sim$ be the equivalence relation on $I\coprod J$ spanned by
	\begin{equation}
		i\sim j \; \Leftrightarrow \; A_i\cap B_j \neq \emptyset,
	\end{equation}
	and let $K$ be the set of equivalence classes of the relation $\sim$.
	For $k\in K$ define	
	\begin{equation}
		C_k=\bigcup_{i\in k\cap I} A_i = \bigcup_{j\in k\cap J} B_j.
	\end{equation}
	The family $\cC=\cA\cup \cB=\{C_k\}_{k\in K}$ will be called \emph{the union} of the component systems $\cA$ and $\cB$. It is clear that the sets $C_k$ are pairwise disjoint and cover $X$.

\begin{thm}\label{t:SumOfComponentSystems}
	The family $\cC=\cA\cup \cB$ is a future $\fF$--component system on $X$. Furthermore, if $\cA$ or $\cB$ is closed, then $\cC$ is closed.
\end{thm}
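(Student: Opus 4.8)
The plan is to verify, one by one, the four conditions (a)--(d) of Definition \ref{d:StableComponentSystem} for the family $\cC=\{C_k\}_{k\in K}$, and then the statement about closedness. The structural fact to exploit throughout is that each $C_k$ is \emph{simultaneously} a union of $\cA$-components and a union of $\cB$-components, the index set $k$ being a connected component of the bipartite graph on $I\sqcup J$ with an edge $i$--$j$ whenever $A_i\cap B_j\neq\emptyset$. The basic technical tool I would set up first is the interval decomposition of d-paths: if $\gamma\in\vP_X$, then since each $A_i$ is d-convex every preimage $\gamma^{-1}(A_i)$ is order-convex in $[0,1]$ (a d-path with endpoints in $A_i$, suitably reparametrized, stays in $A_i$), and since $I$ is finite $\gamma$ passes through finitely many $\cA$-components $A_{c_1},\dots,A_{c_p}$ in a linear order; likewise for $\cB$. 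I will use this decomposition for d-convexity, and the pieces $\gamma|_{\gamma^{-1}(A_{c_r})}\in\vP_{A_{c_r}}$ for the stability statement.

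\emph{d-convexity of $C_k$} (condition (a)) is the main obstacle. Let $\gamma\in\vP_X$ with $\gamma(0),\gamma(1)\in C_k$; I must show $\gamma$ visits only $\cA$-components (equivalently, only $\cB$-components) indexed by $k$. Writing $\gamma$ through $A_{c_1},\dots,A_{c_p}$ and $B_{d_1},\dots,B_{d_q}$, for each $t$ we have $\gamma(t)\in A_{c_{r(t)}}\cap B_{d_{s(t)}}$, hence $c_{r(t)}\sim d_{s(t)}$, and $r,s$ are non-decreasing step functions of $t$. Running along $[0,1]$: at a transition where only $r$ jumps, the $\cA$-components just before and just after are both $\sim$-related to the common $\cB$-component (and symmetrically if only $s$ jumps); since $c_{1}\in k$ (as $\gamma(0)\in C_k$), transitivity puts every visited index in $k$. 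The difficulty is a \emph{simultaneous transition}: at some $t^*$ both $r$ and $s$ jump and $\gamma(t^*)$ may sit in the ``corner'' $A_{c_{r^-}}\cap B_{d_{s^-}}$ or $A_{c_{r^+}}\cap B_{d_{s^+}}$, yielding only a $\sim$-relation already known, so the two sides fail to link. To close this gap I plan to show such a configuration forces $A_{c_{r^-}}\cap B_{d_{s^+}}\neq\emptyset$ or $A_{c_{r^+}}\cap B_{d_{s^-}}\neq\emptyset$ — using that $\gamma(t^*)$ lies in the closures of $A_{c_{r^+}}$ and $B_{d_{s^+}}$ and that all components are future connected — and, failing a direct argument, to fall back on an induction on $p$, peeling off $A_{c_1}$ at the first transition. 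I expect this to be the technical heart of the proof.

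Once $C_k$ is d-convex, \emph{future connectedness} (condition (b)) follows by a chain argument: for $x\in A_i$, $y\in A_{i'}$ with $i\sim i'$, climb inside each $A_i$ and each $B_j$ along a $\sim$-chain joining them, transferring between the two systems through the intersection points $A_i\cap B_j$ and using future connectedness of the individual components; this genuinely uses that $C_k$ is a union in \emph{both} systems, since a union of future connected, d-convex sets with connected nerve need not be future connected (for instance the union of the two positive coordinate half-axes in $\vR^2$). With future connectedness in hand, iterating common futures and using finiteness shows that the restriction to $k\cap I$ of the component partial order (Proposition \ref{p:ComponentsSystemPartialOrder}) has a maximum $i^*$; then, by antisymmetry of that order, any point of $C_k$ lying above a point of $A_{i^*}$ must itself lie in $A_{i^*}$, so $C_k$ and $A_{i^*}$ are cofinal in the sense of Definition \ref{d:Cofinal}.

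The remaining items then reduce to properties of the single $\cA$-component $A_{i^*}$, which are part of the hypotheses. For \emph{future triviality} (condition (c)): by Proposition \ref{p:CommonStableTraces} applied to $(C_k,C_k)$ and $(A_{i^*},A_{i^*})$ (using $A_{i^*}\subseteq C_k$ and cofinality) one gets $\vSP^+_\fF(C_k,C_k)\overset{\fF}{\simeq}\vSP^+_\fF(A_{i^*},A_{i^*})$, which is $\fF$-contractible since $A_{i^*}\in\cA$. For \emph{future stability of $(C_k,C_l)$} (condition (d)): by Proposition \ref{p:CofinalityPreservesStability} it suffices to stabilize against the maximal $\cA$-component $A_{i^*_l}$ of $l$; given $\alpha\in\vP_{C_k}$, decompose it via the interval decomposition into pieces $\alpha_1,\dots,\alpha_p$ each lying in a single $A_{c_r}$ with $c_r\in k$, observe each $\alpha_r$ future stabilizes in $A_{i^*_l}$ because the pair $(A_{c_r},A_{i^*_l})$ lies within $\cA$ and is future stable, and assemble a common stabilizer using parts (e) and (\ref{i:PConcat}) of Proposition \ref{p:PropertiesOfStabilizers}. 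Finally, if $\cA$ is closed then $A_{i^*}$ has a final point $m$, and future connectedness of $C_k$ together with cofinality with $A_{i^*}$ forces $m$ to be final in $C_k$; the case where $\cB$ is closed is symmetric, so $\cC$ is closed whenever $\cA$ or $\cB$ is.
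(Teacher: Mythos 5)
Your verification of condition (d) contains a genuine error, and it is precisely the step where the paper has to work hardest. The decomposition you rely on does not exist: the components $A_i$ need not be closed subsets of $X$ (see the systems of Theorem \ref{t:InteriorSystem}, where the sets $U_c$ and $V^{\pm}_v$ are typically not closed), so the order-convex preimages $\alpha^{-1}(A_{c_r})$ are in general half-open intervals. A restriction of $\alpha$ to such an interval is not a d-path, and any splitting of $\alpha$ into finitely many genuine d-paths on closed subintervals produces, at each transition, a piece whose terminal endpoint already lies in the next component $A_{c_{r+1}}$; such a piece does not belong to $\vP_{A_{c_r}}$, and future stability of the pair $(A_{c_r},A_{i^*_l})$ says nothing about it. (Already for $X=\vR$, $A_1=(-\infty,0)$, $A_2=[0,\infty)$ and $\alpha(t)=t-\tfrac{1}{2}$ no decomposition of the kind you describe exists.) Hence parts (e) and (\ref{i:PConcat}) of Proposition \ref{p:PropertiesOfStabilizers} cannot be applied, and the paths that cross from one component into another --- exactly the problematic ones --- remain untreated. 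The paper never decomposes $\alpha$: in Propositions \ref{p:PathToMaximal} and \ref{p:CkStable} it builds paths $\beta,\gamma$ from $\alpha(0)$ and $\alpha(1)$ to a common point $u$ high in the maximal component $A_{i(k,+)}$ which stabilize in $C_l$, uses future $\fF$--triviality (condition (c)) to see that $\vP(\alpha(0),u)$ is path-connected, hence $\alpha*\gamma\sim\beta$, and then concludes by 2-out-of-3. That your route never invokes triviality is a warning sign; it is what makes the paper's argument work.

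Conditions (a) and (b) are also not established. For d-convexity you correctly isolate the simultaneous-transition case but leave it open: the missing claim (one of the mixed intersections $A_{c_{r^-}}\cap B_{d_{s^+}}$ or $A_{c_{r^+}}\cap B_{d_{s^-}}$ is nonempty) does not follow from any local consideration, and the fallback ``induction on $p$'' meets the same corner configuration. Your chain argument for future connectedness has a similar flaw: an upper bound of $x$ and $z\in A_i\cap B_j$ taken inside $A_i$ and an upper bound taken inside $B_j$ need not themselves admit a common upper bound, so the chain does not close; and since you obtain the maximum $i^*$ \emph{from} future connectedness, the plan is circular at this point. The paper avoids local analysis altogether: finiteness yields Proposition \ref{p:Ends} (the top end of each $A_i$ lies in a single $B_{j(i)}$), whence each class has a unique maximal $\cA$--component cofinal with $C_k$ (Propositions \ref{p:MaximalEnd}, \ref{p:CkCofinal}); future connectedness follows from reachability of that maximal component via Proposition \ref{p:TraceDominates}, and d-convexity is deduced globally (Proposition \ref{p:CkIsConvex}) from antisymmetry of the component order (Proposition \ref{p:ComponentsSystemPartialOrder}): a path leaving $C_k$ for $C_l$ and returning would force $i(k,+)\leq i(l,+)\leq i(k,+)$. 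Your treatment of (c), of cofinality with $A_{i^*}$, and of the closedness statement is fine in outline, but it rests on (a), (b) and (d), so as it stands the proposal is not a proof.
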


The proof of Theorem \ref{t:SumOfComponentSystems} uses several lemmas; they will be formulated for the component system $\cA$ only, since similar facts hold for $\cB$. For $x\in X$, the index of the component of $\cA$ (resp. $\cB$) that contains $x$ will be denoted by $i(x)$ (resp. $j(x)$), so that $x\in A_{i(x)}$ and $x\in B_{j(x)}$. Recall that the sets $I$ and $J$ are partially ordered, as shown in Proposition \ref{p:ComponentsSystemPartialOrder}.

\begin{prp}\label{p:Ends}
	For every $i\in I$, there exists a unique element $j(i)\in J$ such that
	\[
		(A_i)_{\geq y_i}\subseteq B_{j(i)} \tag{*}
	\]
	for some $y_i\in A_i\cap B_{j(i)}$.
\end{prp}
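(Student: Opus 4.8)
The plan is to use the fact that $A_i$ is future connected together with finiteness of the component system $\cB$ to produce a single component $B_{j(i)}$ that "eventually contains" all of $A_i$. First I would use future connectedness of $A_i$ to observe that for any two points $x,y\in A_i$ there is $z\in A_i$ with $x,y\leq z$; iterating over a suitable cofinal family, I want to show that there is a well-defined "ultimate" $\cB$-component of $A_i$. The key point is that the map $A_i\ni x\mapsto j(x)\in J$ is, in an appropriate sense, eventually constant: if $x\leq z$ in $A_i$, then since $A_i$ is d-convex any d-path from $x$ to $z$ lies in $A_i$, and in particular such a path exists inside both $A_i$ and inside the $\cB$-component structure, so $j(x)\leq j(z)$ in the partial order on $J$ from Proposition \ref{p:ComponentsSystemPartialOrder}. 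Thus $x\mapsto j(x)$ is order-preserving from $(A_i,\leq)$ to $(J,\leq)$.

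Next I would exploit that $A_i$ is future connected to conclude that the image $\{j(x): x\in A_i\}\subseteq J$ is a \emph{directed} subset: given $x,y\in A_i$, pick $z\in A_i$ with $x,y\leq z$ (this uses future connectedness — strictly one needs a common upper bound, which future connectedness provides up to replacing $z$ by a further point), and then $j(x),j(y)\leq j(z)$. Since $J$ is finite, a finite directed poset has a maximum; call it $j(i)$. So there is some $y_i\in A_i$ with $j(y_i)=j(i)$, i.e. $y_i\in A_i\cap B_{j(i)}$, and for every $x\in (A_i)_{\geq y_i}$ we have $j(i)=j(y_i)\leq j(x)\leq j(i)$, hence $j(x)=j(i)$, i.e. $x\in B_{j(i)}$. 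This gives $(A_i)_{\geq y_i}\subseteq B_{j(i)}$, which is $(*)$. Uniqueness of $j(i)$ is then immediate: if $y_i'\in A_i\cap B_{j'}$ also satisfies $(A_i)_{\geq y_i'}\subseteq B_{j'}$, take $w\in A_i$ with $w\geq y_i,y_i'$ (future connectedness again); then $w$ lies in both $B_{j(i)}$ and $B_{j'}$, and since the $\cB$-components are pairwise disjoint, $j'=j(i)$.

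I expect the main obstacle to be the claim that $x\leq z$ in $A_i$ implies $j(x)\leq j(z)$ in $J$, and more precisely that the image of $A_i$ under $x\mapsto j(x)$ is directed — one has to be slightly careful that "future connected" gives, for any finite subset of $A_i$, a common successor \emph{inside} $A_i$ (not merely pairwise successors), which follows by induction on the size of the subset using the two-point property repeatedly. The d-convexity of $A_i$ is what guarantees the relevant d-paths stay in $A_i$ so that the relation $x\leq z$ is the same whether computed in $X$ or in $A_i$; this is needed to invoke Proposition \ref{p:ComponentsSystemPartialOrder} for $\cB$, whose partial order is defined via $\vP_X$. Once monotonicity and directedness are in hand, finiteness of $J$ closes the argument with no further difficulty.
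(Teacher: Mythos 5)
Your argument is correct and rests on the same two ingredients as the paper's proof: the index map $x\mapsto j(x)$ is nondecreasing along $\leq$ (via the partial order on $J$ from Proposition \ref{p:ComponentsSystemPartialOrder}), together with the finiteness of $J$; and your uniqueness argument is identical to the paper's. The only real difference is packaging: the paper proves existence by contradiction, extracting an infinite sequence $x_0<x_1<x_2<\dots$ in $A_i$ with pairwise distinct indices $j(x_r)$, whereas you argue directly, using future connectedness to make the image $\{j(x)\;|\;x\in A_i\}$ a finite directed poset with a maximum $j(i)$ (note that the paper's existence step does not need future connectedness, and your appeal to d-convexity of $A_i$ for monotonicity is superfluous, since $j(x)\leq j(z)$ only requires some d-path in $X$ from $x$ to $z$).
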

\begin{proof}
	Assume otherwise. Fix $x_0\in A_i$. By the assumption, there exists $x_0<x_1\in A_i$ such that $j(x_0)<j(x_1)$. By repeating this argument, we construct an infinite sequence of points $x_0<x_1<x_2<\dots$ of $A_i$ such that $j(x_{r+1})\neq j(x_r)$. Since the sets $B_{j_r}$ are d-convex, all indices $j(x_r)$ must be different, which contradicts the finiteness of $J$. This proves the existence of $j(i)$ and $y_i$.
	
	If $j'(i)\in J$, $y_i'\in A_i$ is another pair satisfying the condition (*), then, since $A_i$ is future connected, there exists $y_i''\in (A_i)_{\geq y_i,y'_i}$. Then $y_i''\in A_i \cap B_{j(i)}\cap B_{j'(i)}$, which shows that $j(i)=j'(i)$. Thus, $j(i)$ is unique.
\end{proof}

\begin{prp}\label{p:MaximalEnd}
	Let $k\in K$, $i\in I\cap k$. The following conditions are equivalent:
	\begin{enumerate}[\normalfont (a)]
	\item{$i$ is a maximal element in $I\cap k$.}
	\item{$i$ is the unique maximal element in $I\cap k$.}
	\item{$A_i$ and $B_{j(i)}$ are cofinal.}
	\end{enumerate}
\end{prp}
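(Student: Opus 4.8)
The plan is to prove the cycle (b)$\Rightarrow$(a)$\Rightarrow$(c)$\Rightarrow$(b). The implication (b)$\Rightarrow$(a) is trivial. For (a)$\Rightarrow$(c), let $i$ be maximal in $I\cap k$. Proposition \ref{p:Ends} gives $j(i)\in J$ and $y_i\in A_i\cap B_{j(i)}$ with $(A_i)_{\geq y_i}\subseteq B_{j(i)}$, so it suffices to prove the reverse inclusion $(B_{j(i)})_{\geq y_i}\subseteq A_i$, which together with the witness $y_i$ gives cofinality of $A_i$ and $B_{j(i)}$. If $w\in B_{j(i)}$ with $w\geq y_i$, write $w\in A_{i'}$; then $A_{i'}\cap B_{j(i)}\neq\emptyset$ forces $i'\sim j(i)\sim i$, hence $i'\in I\cap k$, and a d-path $y_i\to w$ witnesses $\vP_X(A_i,A_{i'})\neq\emptyset$, i.e. $i\leq i'$; maximality of $i$ gives $i'=i$, so $w\in A_i$.

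The bulk of the work is (c)$\Rightarrow$(b). I would first record two order-theoretic facts, each a consequence of future connectedness. (F1) $j(i)=\max\{j\in J : A_i\cap B_j\neq\emptyset\}$: this maximum is attained by $j(i)$ itself (the set contains $j(i)$ since $y_i$ lies in the intersection), and if $A_i\cap B_j\ni p$ then choosing $c\in A_i$ above both $p$ and $y_i$ gives $c\in(A_i)_{\geq y_i}\subseteq B_{j(i)}$ and a d-path $p\to c$, so $j\leq j(i)$; symmetrically $i(j)=\max\{i\in I : A_i\cap B_j\neq\emptyset\}$. (F2) Hence $A_i\cap B_j\neq\emptyset$ implies $i\leq i(j)$ and $j\leq j(i)$; in particular, taking $j=j(i)$, one gets $i\leq\phi_I(i):=i(j(i))$ for every $i\in I\cap k$. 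Iterating, the chain $i\leq\phi_I(i)\leq\phi_I^{2}(i)\leq\cdots$ in the finite poset $I\cap k$ stabilizes at a $\phi_I$-fixed point $i^{*}(i)\geq i$.

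Next I would observe that the $\phi_I$-fixed points in $I\cap k$ are exactly the indices satisfying (c). Indeed, $\phi_I(i)=i$ means $i(j(i))=i$, which by the uniqueness clause of Proposition \ref{p:Ends} says $(B_{j(i)})_{\geq y'}\subseteq A_i$ for some $y'\in A_i\cap B_{j(i)}$; combining with $(A_i)_{\geq y_i}\subseteq B_{j(i)}$ and choosing $z\in B_{j(i)}$ above both $y_i$ and $y'$ (future connectedness of $B_{j(i)}$) yields $(A_i)_{\geq z}=(B_{j(i)})_{\geq z}$, so $A_i$ and $B_{j(i)}$ are cofinal; the converse is immediate from Proposition \ref{p:Ends}. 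So under (c) the index $i$ is a $\phi_I$-fixed point of $I\cap k$, and since $i'\leq i^{*}(i')$ for every $i'\in I\cap k$, proving (b) reduces to showing that $\phi_I$ has a unique fixed point in each equivalence class $I\cap k$. Finally, (a)$\Rightarrow$(b) will follow by combining (a)$\Rightarrow$(c) with this uniqueness: a maximal $i$ is a $\phi_I$-fixed point of $I\cap k$, hence the unique one, hence the maximum.

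The main obstacle is precisely the uniqueness of the $\phi_I$-fixed point in $I\cap k$ (equivalently, that $I\cap k$ has a maximum). The natural maps $i\mapsto j(i)$ and $j\mapsto i(j)$ are \emph{not} order-preserving — one can arrange configurations in which $i\mapsto j(i)$ reverses the order — so one cannot simply transport inequalities along the bipartite intersection graph $G_k$ on $(I\cap k)\cupdot(J\cap k)$. My plan for this step is: given $i_1,i_2\in I\cap k$, join them by a chain in $G_k$; by (F2) any two $\cA$-components sharing a $\cB$-neighbour $B_j$ both lie below $i(j)\in I\cap k$, and one must propagate this along the chain to conclude $i^{*}(i_1)=i^{*}(i_2)$. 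Making the propagation work requires a finer analysis than pure order theory: for a d-path witnessing $i_1\leq i_2$, track the sequences of $\cA$-cells and $\cB$-cells it meets (each linearly ordered, by d-convexity of the components) and use future connectedness to push its endpoints into the stabilizing regions $(A_i)_{\geq y_i}$ and $(B_j)_{\geq y'_j}$ — in effect, showing that from every point of $C_k$ one can reach, by a d-path lying in $C_k$, the common cofinal region of the (then provably unique) top pair of $C_k$. I expect this to be the technically delicate part of the argument.
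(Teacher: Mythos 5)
Your implications (b)$\Rightarrow$(a) and (a)$\Rightarrow$(c) are correct (the latter is essentially the paper's argument, run directly rather than by contradiction), and your identification of the $\phi_I$-fixed points in $I\cap k$ with the indices satisfying (c) is also sound. But the proposal does not prove the proposition: the implication (c)$\Rightarrow$(b) is reduced to the claim that $\phi_I$ has a unique fixed point in $I\cap k$ (equivalently, that $I\cap k$ has a maximum), and for that claim you offer only a plan --- propagating inequalities along chains in the bipartite intersection graph while tracking the cells met by d-paths --- which you yourself flag as the delicate part and do not carry out. That claim is the real content of the proposition (it is what later makes $C_k$ future connected and $\cC$ a component system), so the proof is incomplete exactly at its central point. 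Note also that your sketch asks for d-paths lying inside $C_k$; d-convexity of $C_k$ is established only afterwards (Proposition \ref{p:CkIsConvex}), using the present proposition, so that route risks circularity.

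The missing step has a short proof which your outline does not reach because it stays at the level of the order relations on $I$ and $J$; the key extra tool is Proposition \ref{p:TraceDominates}. Assuming (c), set $U:=(A_i)_{\geq y_i}=(B_{j(i)})_{\geq y_i}$ and consider $(C_k)_{\leq U}=\{x\in C_k\mid \vP(x,U)\neq\emptyset\}$. Future connectedness of $A_i$ lets any d-path ending in $A_i$ be extended to end in $U$, and Proposition \ref{p:TraceDominates} upgrades pointwise reachability to componentwise reachability: $\vP(x,A_i)\neq\emptyset$ if and only if $i(x)\leq i$. Hence $(C_k)_{\leq U}=\bigcup_{g\in I\cap k,\ g\leq i}A_g$, and the same argument applied to $\cB$ gives $(C_k)_{\leq U}=\bigcup_{h\in J\cap k,\ h\leq j(i)}B_h$. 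Being a nonempty union of complete $\cA$-components and simultaneously of complete $\cB$-components, $(C_k)_{\leq U}$ is saturated under the relation generating $k$, so it equals $C_k$; therefore every $g\in I\cap k$ satisfies $g\leq i$, i.e.\ $i$ is the unique maximal element of $I\cap k$. Supplying this (or an equivalent) argument is necessary before your proof can stand.
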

\begin{proof}
	Assume that $A_i$ and $B_{j(i)}$ are not cofinal. Thus, by Proposition \ref{p:Ends} there exists a point $y_i\in A_i\cap B_{j(i)}$ such that $(A_i)_{\geq y_i}\subseteq B_{j(i)}$ and a point $z\in (B_{j(i)})_{\geq y_i} \setminus A_i$. Obviously $i<i(z)$ and $i(z)\in k$, since $y_i\in B_{j(i)}\cap A_{i}\neq\emptyset$ and $z\in B_{j(i)}\cap A_{i(z)}\neq\emptyset$. This contradicts the maximality of $i$ in $I\cap k$ and proves that (a) implies (c).
	
	Assume that $A_i$ and $B_{j(i)}$ are cofinal. Thus, there exists $y_i\in A_i\cap B_{j(i)}$ such that $U:=(A_i)_{\geq y_i}= (B_{j(i)})_{\geq y_i}$. Since $A_i$ is future connected, every path ending at some point of $A_i$ extends to a path ending at some point of $U$. Furthermore, by Proposition \ref{p:TraceDominates}, $\vP_X(x,A_i)\neq \emptyset$ if and only if $i(x)\leq i$. Therefore,
	\[
		(C_k)_{\leq U}=\{x\in C_k\;|\; \vP(x,U)\neq \emptyset\}=\{x\in C_k\;|\; \vP(x,A_i)\neq \emptyset\}=\bigcup_{\{g\in I\cap k\;|\; g\leq i\}} A_g.
	\]
	Similarly we obtain $(C_k)_{\leq U}=\bigcup_{\{h\in J\cap k\;|\; h\leq j(i)\}} B_h$, which implies that $(C_k)_{\leq U}=C_k$. As a consequence,
	\[
		I\cap k= \{g\in I\cap k\;|\; g\leq i\}
	\]
	and then $i$ is the unique maximal element of $I\cap k$.  Thus, (c) implies (b). The implication (b) $\Rightarrow$ (c) is obvious.
\end{proof}

The unique maximal element in $I\cap k$ (resp. $J\cap k$) will be denoted $i(k,+)$ (resp. $j(k,+)$). Note that $j(i(k,+))=j(k,+)$ and $i(j(k,+))=i(k,+)$.

\begin{prp}\label{p:CkCofinal}
	For every $k\in K$, $C_k$ is future connected and the sets $A_{i(k,+)}$, $B_{j(k,+)}$ and $C_k$ are cofinal.
\end{prp}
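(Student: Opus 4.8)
The plan is to deduce everything from Propositions \ref{p:Ends} and \ref{p:MaximalEnd} together with the description of $(C_k)_{\leq U}$ obtained in the proof of \ref{p:MaximalEnd}. First I would fix $k\in K$, set $i=i(k,+)$, $j=j(k,+)$, and recall from \ref{p:MaximalEnd} that $A_i$ and $B_j$ are cofinal: there is $y\in A_i\cap B_j$ with $U:=(A_i)_{\geq y}=(B_j)_{\geq y}$, and moreover (from the proof of \ref{p:MaximalEnd}) that $(C_k)_{\leq U}=C_k$, i.e. every point of $C_k$ lies below some point of $U$. Since $U\subseteq A_i\subseteq C_k$, the identities $(A_i)_{\geq y}=U=(B_j)_{\geq y}$ immediately give that $A_i$, $B_j$, and $C_k$ are pairwise cofinal \emph{once we know $C_k$ is future connected and $(C_k)_{\geq y}=U$}; so the real content is (i) future connectedness of $C_k$ and (ii) the equality $(C_k)_{\geq y}=U$.

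For (ii): if $x\in C_k$ with $x\geq y$, then $x\geq y\in A_i$, so $\vP_X(A_i,x)\neq\emptyset$; since $x$ lies in some $A_g$, $g\in I\cap k$, Proposition \ref{p:TraceDominates} gives $i\leq g$ in $I$, hence $g=i$ by maximality, so $x\in A_i$ and therefore $x\in(A_i)_{\geq y}=U$. The reverse inclusion $U\subseteq(C_k)_{\geq y}$ is trivial. This shows $(C_k)_{\geq y}=U=(A_i)_{\geq y}=(B_j)_{\geq y}$, which is exactly the cofinality of all three sets with witness $y$ (Definition \ref{d:Cofinal}), provided each of the three is future connected.

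For (i), future connectedness of $C_k$: given $x,x'\in C_k$, I must produce $w\in C_k$ with $\vP_{C_k}(x,w)\neq\emptyset\neq\vP_{C_k}(x',w)$. Using $(C_k)_{\leq U}=C_k$, pick $u\geq x$, $u'\geq x'$ with $u,u'\in U$; choose d-paths from $x$ to $u$ and from $x'$ to $u'$. A priori these paths need not stay in $C_k$, but each is a path in $X$ between points of $A_i$ (since $u,u'\in U\subseteq A_i$ and the initial points lie in $C_k$ — here I must be careful: $x,x'$ need not be in $A_i$). The cleaner route is: $U=(A_i)_{\geq y}$ is future connected because $A_i$ is (given $u,u'\in U$, future connectedness of $A_i$ yields $w_0\in A_i$ above both, and by \ref{p:PropertiesOfStabilizers}.(d)-type cofinality — or directly, replacing $w_0$ by a point of $A_i$ above $w_0$ and $y$ — we may take $w_0\in U$, with the connecting paths lying in $A_i$, hence in $U$ by d-convexity-type reasoning applied to $(A_i)_{\geq y}$... actually $(A_i)_{\geq y}$ is d-convex in the sense that any d-path from $y'\geq y$ to $y''$ within $A_i$ stays $\geq y$). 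So $U$ is future connected with a common upper target $w\in U\subseteq C_k$ for $u,u'$. Then concatenate: the path $x\to u$ lies in $C_k$ because... this is the one point needing the $A_g$'s to be d-convex. In fact the path $x\to u$ with $x\in A_g$, $u\in U\subseteq A_i$ passes only through points $p$ with $\vP_X(A_g,p)\neq\emptyset$ and $\vP_X(p,A_i)\neq\emptyset$, so $g\leq i(p)\leq i$ in $I$, whence $i(p)\in I\cap k$ and $p\in C_k$. Thus the whole concatenated path $x\to u\to w$ lies in $C_k$, and similarly for $x'$; hence $C_k$ is future connected. The same argument shows $U$ and $A_i$, $B_j$ are future connected (the latter two by hypothesis).

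\textbf{Main obstacle.} The delicate step is verifying that the d-paths used to witness future connectedness of $C_k$ actually lie inside $C_k$ — one cannot quote d-convexity of $C_k$ directly (that is part of what Theorem \ref{t:SumOfComponentSystems} is proving), so one must argue pointwise along the path using Proposition \ref{p:TraceDominates} and the fact that each index set $I\cap k$, $J\cap k$ is ``convex'' under $\leq$ in the sense extracted from the proof of Proposition \ref{p:MaximalEnd} (namely $I\cap k=\{g\in I:g\leq i(k,+)\}$, so it is downward closed in $I$ toward its maximum, and a d-path from $A_g$ to $A_i$ with $g,i\in I\cap k$ stays in $\bigcup_{g\leq h\leq i}A_h\subseteq C_k$). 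Once this bookkeeping with the partial orders on $I$ and $J$ is in place, future connectedness and the cofinality of $A_{i(k,+)}$, $B_{j(k,+)}$, $C_k$ follow formally.
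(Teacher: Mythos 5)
Your cofinality argument (part (ii)) is correct and is essentially the paper's: by \ref{p:MaximalEnd} there is $y$ with $(A_{i(k,+)})_{\geq y}=(B_{j(k,+)})_{\geq y}$, and any $x\in C_k$ with $x\geq y$ has $i(k,+)\leq i(x)$, hence $i(x)=i(k,+)$ by maximality, so $(C_k)_{\geq y}=(A_{i(k,+)})_{\geq y}$. The core of your future-connectedness argument (push $x,x'$ up into the top component and use future connectedness of $A_{i(k,+)}$ to get a common upper bound) also matches the paper, which simply produces $z\in(A_{i(k,+)})_{\geq z_x,z_y}\subseteq (C_k)_{\geq x,y}$ and stops there.

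The genuine gap is in the step you yourself flag as delicate. You justify ``$g\leq i(p)\leq i(k,+)$ implies $i(p)\in I\cap k$'' by asserting that $I\cap k=\{g\in I\;:\;g\leq i(k,+)\}$, ``extracted from the proof of \ref{p:MaximalEnd}''. That is a misreading: the proof of \ref{p:MaximalEnd} only yields $I\cap k=\{g\in I\cap k\;:\;g\leq i(k,+)\}$, i.e.\ that $i(k,+)$ is the maximum of $I\cap k$; it says nothing about indices outside $k$ that happen to lie below $i(k,+)$. The downward-closedness you need is essentially equivalent to the d-convexity of $C_k$, which is Proposition \ref{p:CkIsConvex} and is proved later by a genuinely different argument using \ref{p:CrossStabilization} and the non-emptiness of stable path spaces -- it is not a formal consequence of \ref{p:MaximalEnd}. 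Two ways to repair your proof: (1) observe that \ref{p:CrossStabilization} and \ref{p:CkIsConvex} do not depend on the present proposition, so you may invoke d-convexity of $C_k$ to conclude that any d-path in $X$ between points of $C_k$ stays in $C_k$ (this is also how the paper's looser formulation, which only exhibits a common upper bound, is reconciled with the definition of future connectedness); or (2) build the connecting path from $x$ up to $A_{i(k,+)}$ as a concatenation of segments each contained in a single component $A_g$ or $B_{j(g)}$ with $g\in I\cap k$, zigzagging through the $B$-components as in the proof of \ref{p:PathToMaximal}; such a path lies in $C_k$ by construction, with no claim about interval-closedness of $I\cap k$ needed.
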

\begin{proof}
	If $x,y\in C_k$, then there exist $z_x\in (A_{i(k,+)})_{\geq x}$ and $z_y\in (A_{i(k,+)})_{\geq y}$, since $i(x),i(y)\leq i(k,+)$. Thus, there exists $z\in (A_{i(k,+)})_{\geq z_x,z_y}\subseteq (C_k)_{\geq x,y}$, since $A_{i(k,+)}$ is future connected.
	
	The cofinality of $A_{i(k,+)}$, $B_{j(k,+)}$ and $C_k$ is an immediate consequence of Proposition \ref{p:MaximalEnd}.
\end{proof}

\begin{prp}\label{p:CrossStabilization}
	For every $k\in K$ and every $j\in J$, the pair $(B_j,A_{i(k,+)})$ is future stable. Furthermore, if $l\in K$  then the spaces
	\begin{itemize}
	\item{
		$\vSP^+_\fF(B_j,A_{i(k,+)})$ for $j\in J\cap l$, and
	}
	\item{
		$\vSP^+_\fF(A_i,A_{i(k,+)})$ for $i\in I\cap l$
	}
	\end{itemize}
	are all $\fF$--equivalent.
\end{prp}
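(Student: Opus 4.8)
\emph{Proof plan.}
The plan is to treat the two assertions in turn. For the first, I would transport future stability across a cofinality. By Proposition~\ref{p:CkCofinal} the sets $A_{i(k,+)}$ and $B_{j(k,+)}$ are cofinal, and, since $\cB$ is a future $\fF$--component system, the pair $(B_j,B_{j(k,+)})$ is future $\fF$--stable by Definition~\ref{d:StableComponentSystem}.(d). Applying Proposition~\ref{p:CofinalityPreservesStability} to the three future connected subsets $B_j$, $B_{j(k,+)}$, $A_{i(k,+)}$ (all of them components of $\cA$ or $\cB$) then shows that $(B_j,A_{i(k,+)})$ is future $\fF$--stable, for every $j\in J$ and every $k\in K$.

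For the second assertion, the basic step concerns a single incidence: if $i\in I\cap l$ and $j\in J\cap l$ satisfy $A_i\cap B_j\neq\emptyset$, then $\vSP^+_\fF(A_i,A_{i(k,+)})\overset{\fF}{\simeq}\vSP^+_\fF(B_j,A_{i(k,+)})$. This is an instance of Proposition~\ref{p:CommonStableTraces} applied to the pairs $(A_i,A_{i(k,+)})$ and $(B_j,A_{i(k,+)})$: the former is future $\fF$--stable by Definition~\ref{d:StableComponentSystem}.(d) for $\cA$, the latter by the first assertion just proved, the two sources meet by hypothesis, and the two targets coincide and hence are trivially cofinal.

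It then remains to chain these equivalences. By construction $l$ is an equivalence class of the relation on $I\coprod J$ generated by $i\sim j\Leftrightarrow A_i\cap B_j\neq\emptyset$; since the generating relation only links indices of $I$ with indices of $J$, any two elements of $l$ are joined by a finite chain of elements of $l$ that alternates between $I$ and $J$, each consecutive pair being an incidence as in the basic step. Applying the basic step along such a chain and using that $\fF$--equivalence is transitive (it is the relation ``connected by a zig-zag of $\fF$--equivalences'') yields that all the spaces $\vSP^+_\fF(A_i,A_{i(k,+)})$ for $i\in I\cap l$ and $\vSP^+_\fF(B_j,A_{i(k,+)})$ for $j\in J\cap l$ are mutually $\fF$--equivalent, which is the claim. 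I do not expect a genuine obstacle here: the whole argument is bookkeeping with earlier results. The only points that need care are checking the hypotheses of Proposition~\ref{p:CommonStableTraces} at each link --- the common target $A_{i(k,+)}$ makes its cofinality hypothesis automatic --- and observing that the equivalence class $l$ supplies the alternating $I$/$J$ chains so that every link is indeed an incidence of the treated form.
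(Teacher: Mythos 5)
Your proposal is correct and follows essentially the same route as the paper: the first assertion is obtained from Proposition~\ref{p:CofinalityPreservesStability} via the cofinality of $A_{i(k,+)}$ and $B_{j(k,+)}$, and the second from Proposition~\ref{p:CommonStableTraces} applied to incidences $A_i\cap B_j\neq\emptyset$, chained through the equivalence class $l$. Your write-up merely spells out the zig-zag/transitivity bookkeeping that the paper's two-line proof leaves implicit.
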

\begin{proof}
	Since $A_{i(k,+)}$ is cofinal with $B_{j(i(k,+))}=B_{j(k,+)}$, the first statement follows from \ref{p:CofinalityPreservesStability}. The second statement follows from \ref{p:CommonStableTraces}.
\end{proof}

\begin{prp}\label{p:CkIsConvex}
	For every $k\in K$, $C_k$ is d-convex.
\end{prp}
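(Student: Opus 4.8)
The plan is to reduce the statement to a single order‑theoretic fact — that $k\cap I$ is an order‑convex subset of the poset $(I,\le)$ of Proposition~\ref{p:ComponentsSystemPartialOrder} — and to prove that fact by realizing $k\cap I$ as a fibre of a closure operator on $I$ built from the ``eventual component'' maps of Proposition~\ref{p:Ends}.

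First the reduction. Let $\gamma\in\vP_X$ with $\gamma(0),\gamma(1)\in C_k$. Since each $A_i$ is d‑convex, each $\gamma^{-1}(A_i)$ is a subinterval of $[0,1]$, and because $\{A_i\}_{i\in I}$ is a finite partition of $X$ these intervals tile $[0,1]$; hence $\gamma$ runs successively through components $A_{i_0},A_{i_1},\dots,A_{i_m}$ with $i_0<i_1<\dots<i_m$ (strictness: $\gamma$ cannot re‑enter a d‑convex set once it has left it, and consecutive components are comparable because $s<t$ forces $\gamma(s)\le\gamma(t)$). Now $i_0=i(\gamma(0))\in k\cap I$ and $i_m=i(\gamma(1))\in k\cap I$, and since the $A_i$ are pairwise disjoint, $\gamma([0,1])\subseteq C_k$ holds if and only if every $i_r$ lies in $k$. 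As $i_0\le i_r\le i_m$, it suffices to show that $k\cap I$ (equivalently, by the symmetric argument with $\cB$, $k\cap J$) is order‑convex in $(I,\le)$.

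For that fact I would argue as follows. Proposition~\ref{p:Ends} gives a map $j\colon I\to J$, $i\mapsto j(i)$, with $(A_i)_{\ge y_i}\subseteq B_{j(i)}$ for a suitable $y_i\in A_i\cap B_{j(i)}$; symmetrically there is a map $i(\cdot)\colon J\to I$ with $(B_h)_{\ge z_h}\subseteq A_{i(h)}$. Both are monotone: if $g\le g'$ in $I$ then $\vP_X(A_g,A_{g'})\neq\emptyset$, so by Proposition~\ref{p:TraceDominates} there is a d‑path from $y_g$ into $A_{g'}$, which one prolongs inside the future connected set $A_{g'}$ past $y_{g'}$, landing in $(A_{g'})_{\ge y_{g'}}\subseteq B_{j(g')}$; hence $\vP_X(B_{j(g)},B_{j(g')})\neq\emptyset$, i.e.\ $j(g)\le j(g')$. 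Set $\phi\colon I\to I$, $\phi(g)=i(j(g))$. Choosing inside $B_{j(g)}$ a point above both $y_g$ and $z_{j(g)}$ shows $g\le\phi(g)$ for all $g$, so $\phi$ is monotone with $\phi\ge\id$, and since $I$ is finite the iterates stabilise, giving an idempotent monotone $\bar\phi:=\phi^{|I|}$ with $\bar\phi\ge\id$, i.e.\ a closure operator on $I$. Next, $\phi(g)=g$ iff $A_g$ and $B_{j(g)}$ are cofinal: $\phi(g)=g$ means $(B_{j(g)})_{\ge z_{j(g)}}\subseteq A_g$, which with $(A_g)_{\ge y_g}\subseteq B_{j(g)}$ and future connectedness of $A_g$ produces a point $w$ with $(A_g)_{\ge w}=(B_{j(g)})_{\ge w}$, and conversely; by Proposition~\ref{p:MaximalEnd} this cofinality is equivalent to $g$ being the greatest element $i(k,+)$ of its class. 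Hence $\bar\phi(g)$, a $\phi$‑fixed point lying in the class of $g$, equals the greatest element of that class, so two elements of $I$ are $\sim$‑equivalent exactly when they share the same $\bar\phi$‑value; thus $k\cap I=\bar\phi^{-1}(i(k,+))$. Finally a fibre of a closure operator is order‑convex: if $g\le h\le g'$ with $\bar\phi(g)=\bar\phi(g')=i(k,+)$, monotonicity squeezes $\bar\phi(h)$ between them, so $\bar\phi(h)=i(k,+)$ and $h\in k\cap I$.

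The one genuine difficulty is the circularity one meets trying to prove order‑convexity of $k\cap I$ directly: through monotonicity of $j$ it reduces to order‑convexity of $k\cap J$, which reduces back, with no base case. Introducing the idempotent iterate $\bar\phi$ (legitimate only because $I$ is finite) and identifying its fibres with the $\sim$‑classes — the step where Propositions~\ref{p:Ends} and~\ref{p:MaximalEnd} actually do the work — is precisely what closes this loop; everything else is routine manipulation of the inclusions $(A_i)_{\ge y_i}\subseteq B_{j(i)}$ together with future connectedness and Proposition~\ref{p:TraceDominates}.
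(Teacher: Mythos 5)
Your proof is correct, and its second half takes a genuinely different route from the paper's. The reduction is essentially the one the paper makes too: the paper passes immediately from a putative d-path with endpoints in $C_k$ that leaves $C_k$ to indices $i_1<i_2<i_3$ with $i_1,i_3\in k$ and $i_2\in l\neq k$, i.e.\ to order-convexity of $k\cap I$; your interval-tiling argument is just a more explicit version of that step. Where you diverge is in proving this fact. The paper stays inside the stable-path-space machinery: by Proposition \ref{p:CrossStabilization}, $\vSP^+_\fF(A_{i_1},A_{i(l,+)})\simeq\vSP^+_\fF(A_{i(k,+)},A_{i(l,+)})$ and $\vSP^+_\fF(A_{i_2},A_{i(k,+)})\simeq\vSP^+_\fF(A_{i(l,+)},A_{i(k,+)})$, so Proposition \ref{p:TraceDominates} transfers nonemptiness and gives $i(k,+)\leq i(l,+)\leq i(k,+)$, contradicting antisymmetry (Proposition \ref{p:ComponentsSystemPartialOrder}). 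You instead prove the underlying monotonicity combinatorially: the maps $j\colon I\to J$ and $i\colon J\to I$ of Proposition \ref{p:Ends} (and its mirror for $\cB$) are monotone, $\phi=i\circ j\geq \id$, and on the finite poset $I$ a suitable iterate $\bar\phi$ is a closure operator whose fixed points are, via Proposition \ref{p:MaximalEnd}, exactly the tops $i(k,+)$ and whose fibres are the classes $k\cap I$, hence order-convex. Your argument uses only the existence of d-paths (Proposition \ref{p:TraceDominates}, (b)$\Leftrightarrow$(c), plus future connectedness) and never compares stable path spaces, so it is more elementary and manifestly independent of the equivalence system $\fF$; the paper's version is shorter in context because Proposition \ref{p:CrossStabilization} is needed anyway for Proposition \ref{p:CkStable} and Theorem \ref{t:SumOfComponentSystems}, so convexity comes almost for free. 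The two details you leave implicit are genuinely easy to fill: $\phi$ preserves $\sim$-classes because $y_g\in A_g\cap B_{j(g)}$ and $z_{j(g)}\in B_{j(g)}\cap A_{i(j(g))}$, and the converse of ``$\phi(g)=g$ iff $A_g$ and $B_{j(g)}$ are cofinal'' follows from the uniqueness clause of Proposition \ref{p:Ends}.
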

\begin{proof}
	Assume otherwise. Then there exist $k\neq l\in K$, $i_1,i_3\in k$, $i_2\in l$ and a d-path $\alpha\in \vP(A_{i_1},A_{i_3})$ such that $\alpha(\tfrac{1}{2})\in A_{i_2}$, i.e.,   $i_1<i_2<i_3$. Since $i_1<i_2\leq i(l,+)$ and $i_2<i_3\leq i(k,+)$, we have
	\begin{align*}
		\emptyset &\buildrel{(\ref{p:TraceDominates})}\over\neq \vSP^+_\fF(A_{i_1},A_{i(l,+)})\buildrel{(\ref{p:CrossStabilization})}\over\simeq \vSP^+_\fF(A_{i(k,+)},A_{i(l,+)})\\
		\emptyset &\buildrel{(\ref{p:TraceDominates})}\over\neq \vSP^+_\fF(A_{i_2},A_{i(k,+)})\buildrel{(\ref{p:CrossStabilization})}\over\simeq \vSP^+_\fF(A_{i(l,+)},A_{i(k,+)}).
	\end{align*}
	Thus $i(k,+)\leq i(l,+)\leq i(k,+)$, 	which contradicts Proposition \ref{p:ComponentsSystemPartialOrder}.
\end{proof}

\begin{prp}\label{p:PathToMaximal}
	For $k,l\in K$ and  $x,y\in C_k$ there exists a path $\alpha\in \vP_X(x,(C_k)_{\geq y})$ that future $\fF$--stabilizes in $C_l$.
\end{prp}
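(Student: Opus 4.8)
The plan is to reduce the statement to the construction of one well‑chosen path and then to build that path by a combinatorial induction which keeps it inside a single component at every step. Concretely, it suffices to produce a finite chain $x=p_0\le p_1\le\dots\le p_N$ of points of $C_k$ with $p_N\ge y$ such that for every $r$ the points $p_{r-1},p_r$ lie in a common member $D_r$ of $\cA$ or of $\cB$: choosing $\alpha_r\in\vP_X(p_{r-1},p_r)$ (nonempty since $p_{r-1}\le p_r$), d‑convexity of $D_r$ forces $\alpha_r\in\vP_{D_r}$, and then $\alpha:=\alpha_1*\dots*\alpha_N\in\vP_X(x,p_N)$ with $p_N\in(C_k)_{\ge y}$. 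So it remains to show (i) such an $\alpha$ future $\fF$--stabilizes in $C_l$, and (ii) such a chain exists.

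For (i) I would argue that each piece $\alpha_r$ future $\fF$--stabilizes in $A_{i(l,+)}$: if $D_r\in\cA$ this is axiom (d) of Definition \ref{d:StableComponentSystem} for $\cA$, and if $D_r\in\cB$ it is Proposition \ref{p:CrossStabilization} applied to the class $l$ (in either case the pair $(D_r,A_{i(l,+)})$ is future $\fF$--stable). Then Proposition \ref{p:PropertiesOfStabilizers}(e),(f) gives $\St^+(\alpha;A_{i(l,+)})\supseteq\bigcap_r\St^+(\alpha_r;A_{i(l,+)})\neq\emptyset$. Finally $A_{i(l,+)}$ and $C_l$ are cofinal (Proposition \ref{p:CkCofinal}); choosing $u$ with $(A_{i(l,+)})_{\ge u}=(C_l)_{\ge u}$ and, via Proposition \ref{p:PropertiesOfStabilizers}(d), a point $z\in\St^+(\alpha;A_{i(l,+)})$ with $z\ge u$, one has $(A_{i(l,+)})_{\ge z}=(C_l)_{\ge z}$, so Proposition \ref{p:PropertiesOfStabilizers}(b) upgrades $z\in\St^+(\alpha;A_{i(l,+)})$ to $z\in\St^+(\alpha;C_l)$.

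For (ii) I would start with $p_0=x\in A_{i(x)}$, $i(x)\in I\cap k$, and iterate the assignment $i\mapsto i(j(i))$, where $j(i)\in J\cap k$ and $y_i\in A_i\cap B_{j(i)}$ come from Proposition \ref{p:Ends} (so $(A_i)_{\ge y_i}\subseteq B_{j(i)}$) and $i(j),z_j$ are the $\cB$--analogues. From a current point $p\in A_i$ with $i<i(k,+)$: future connectedness of $A_i$ yields $q\in A_i$ with $q\ge p$ and $q\ge y_i$, whence $q\in(A_i)_{\ge y_i}\subseteq B_{j(i)}$; future connectedness of $B_{j(i)}$ yields $q'\in B_{j(i)}$ with $q'\ge q$ and $q'\ge z_{j(i)}$, whence $q'\in A_{i(j(i))}$. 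Appending $p,q,q'$ to the chain, the current component index rises to $i(j(i))\ge i$, and $i(j(i))=i$ would make $A_i$ cofinal with $B_{j(i)}$ (take a common upper bound of $y_i,z_{j(i)}$ in $A_i$, note it automatically lies in $B_{j(i)}$, and compare up–sets), hence $i=i(k,+)$ by Proposition \ref{p:MaximalEnd} --- a contradiction; so the index strictly increases and, $I\cap k$ being finite, after finitely many steps we reach a point $p^*\ge x$ in $A_{i(k,+)}$. Since $i(y)\le i(k,+)$, Proposition \ref{p:TraceDominates} gives $y''\in A_{i(k,+)}$ with $y''\ge y$; a final step inside $A_{i(k,+)}$ from $p^*$ to some $w\ge p^*,y''$ completes the chain with $p_N=w\ge y$.

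The hard part is getting the chain construction right, i.e. step (ii): one must insist that every step stays within a single member of $\cA$ or of $\cB$ (so that d‑convexity confines each piece inside its component and no "boundary‑straddling" piece ever occurs), and one must verify that the combinatorial iteration $i\mapsto i(j(i))$ genuinely makes progress below $i(k,+)$ and terminates precisely at $i(k,+)$ --- which is exactly where Propositions \ref{p:Ends} and \ref{p:MaximalEnd} are used. Once the chain exists, part (i) is a routine bookkeeping of stabilizers.
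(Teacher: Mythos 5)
Your proof is correct and follows essentially the same route as the paper's: you climb from the component of $x$ through alternating members of $\cA$ and $\cB$ up to the maximal component $A_{i(k,+)}$ using Propositions \ref{p:Ends} and \ref{p:MaximalEnd}, keep each segment of the path inside a single d-convex member so that the stability axioms and Proposition \ref{p:CrossStabilization} apply to it, and then combine the stabilizers via Proposition \ref{p:PropertiesOfStabilizers}(e),(f) together with the cofinality of $A_{i(l,+)}$ and $C_l$. The only differences are presentational: the paper runs a descending induction on $i(x)$ and escapes $A_{i(x)}$ via a point of $B_{j(i(x))}$ outside $A_{i(x)}$ supplied by non-cofinality, whereas you unroll this into an explicit finite chain and certify the strict increase of the component index using the $\cB$--analogue of Proposition \ref{p:Ends} and the contrapositive of Proposition \ref{p:MaximalEnd}.
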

\begin{proof}
	Induction with respect to $i(x)\in I\cap k$. If $i(x)=i(k,+)$, then there exists $y'\in (A_{i(k,+)})_{\geq x,y}$, since $C_k$ is future connected and cofinal with $A_{i(k,+)}$. Any d-path $\alpha\in \vP_X(x,y')$ is contained in $A_{i(k,+)}$. Thus, $\alpha$ future $\fF$--stabilizes in $A_{i(l,+)}$ and, by cofinality, in $C_l$.
	
	Now assume that $i(x)$ is not maximal in $I\cap k$ and the proposition holds for all $i\in (I\cap k)_{> i(x)}$. Choose $y_{i(x)}\in A_{i(x)}\cap B_{j(i(x))}$ such that $(A_{i(x)})_{\geq y_{i(x)}}\subseteq B_{j({i(x)})}$ (\ref{p:Ends}). By Proposition \ref{p:MaximalEnd}, $A_{i(x)}$ and $B_{j({i(x)})}$ are not cofinal; thus, there exists an element $u\in (B_{j({i(x)})})_{\geq y_{i(x)}}$ that does not belong to $A_{i(x)}$. Choose $v\in (A_{i(x)})_{\geq y_{i(x)},x}$; clearly $v\in B_{j({i(x)})}$ and we can choose $x'\in (B_{j({i(x)})})_{\geq u,v}$. Notice that $i(x)<i(x')$, since $A_i$ is d-convex.

	Eventually, pick $\beta\in \vP_{A_{i(x)}}(x,v)$ and $\gamma\in \vP_{B_{j({i(x)})}}(v,x')$. From the inductive hypothesis, there exists a path $\alpha'\in \vP_X(x',(C_k)_{\geq y})$ which future $\fF$--stabilizes in $C_l$. Since $\beta$ future $\fF$--stabilizes in $A_{i(l,+)}$, $\gamma$ future $\fF$--stabilizes in $B_{j(l,+)}$ and all $A_{i(l,+)}$, $B_{j(l,+)}$ and $C_l$ are cofinal, both $\beta$ and $\gamma$ future $\fF$--stabilize in $C_l$. Proposition \ref{p:PropertiesOfStabilizers}.(\ref{i:PConcat}) implies that $\beta*\gamma*\alpha'$ future $\fF$--stabilizes in $C_l$.
\end{proof}

\begin{prp}\label{p:CkStable}
	For every $k,l\in K$, the pair $(C_k,C_l)$ is stable.
\end{prp}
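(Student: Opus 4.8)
The plan is to bootstrap Proposition~\ref{p:PathToMaximal}, which produces only \emph{one} future $\fF$--stabilizing d-path out of a given point of $C_k$, into the statement that \emph{every} $\alpha\in\vP_{C_k}$ future $\fF$--stabilizes in $C_l$; the leverage point is that inside $C_k$ all the relevant d-path spaces become path-connected once one goes far enough into $A_{i(k,+)}$. Two auxiliary facts are wanted. First, a \emph{cut-invariance lemma}: if $D\subseteq X$ is future connected, $\gamma\in\vP_X$ future $\fF$--stabilizes in $D$ and $\eta\in\vP_X$ has $\eta(1)=\gamma(0)$, then $\eta$ future $\fF$--stabilizes in $D$ whenever $\eta*\gamma$ does. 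To see it, pick (using Proposition~\ref{p:PropertiesOfStabilizers}.(a),(d)) a point $z\in D$ that stabilizes both $\gamma$ and $\eta*\gamma$; for $\beta\in\vP_{D_{\geq z}}$ one factors $\vP(\eta,\beta)$ through the maps $\vP(\eta*\gamma,\beta)$, $\vP(\gamma,\beta)$ and suitable constant-path maps, and the $2$-out-of-$3$ argument of Proposition~\ref{p:PropertiesOfStabilizers}.(c) shows $\vP(\eta,\beta)$ is an $\fF$--equivalence, so $z\in\St^+_\fF(\eta;D)$. Second, a \emph{contractibility fact}: for $x\in C_k$ and any $W\in A_{i(k,+)}$ lying above some point of $\St^+_\fF(\sigma_x;A_{i(k,+)})$, the space $\vP_X(x,W)$ is $\fF$--contractible, hence (since $\fF\subseteq\fF_0$, so an $\fF$--contractible space has a single path-component) path-connected. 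Indeed $(A_{i(x)},A_{i(k,+)})$ is future $\fF$--stable by axiom (d) of Definition~\ref{d:StableComponentSystem} for $\cA$, so $\vP_X(x,W)\overset{\fF}{\simeq}\vSP^+_\fF(A_{i(x)},A_{i(k,+)})$ by Proposition~\ref{p:IndependenceOfPoint}; by Proposition~\ref{p:CrossStabilization} applied with both indices there equal to $k$ this is $\fF$--equivalent to $\vSP^+_\fF(A_{i(k,+)},A_{i(k,+)})$, which is $\fF$--contractible because $A_{i(k,+)}$ is future $\fF$--trivial (axiom (c) of Definition~\ref{d:StableComponentSystem} for $\cA$).

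Granting these, fix $\alpha\in\vP_{C_k}$, say $\alpha\colon x\to w$. Since $A_{i(k,+)}$ is future connected and cofinal with $C_k$ (Proposition~\ref{p:CkCofinal}), choose $y\in A_{i(k,+)}$ that is $\geq w$, above the cofinality point of $A_{i(k,+)}$ and $C_k$ (so that $(C_k)_{\geq y}\subseteq A_{i(k,+)}$), and above a point of $\St^+_\fF(\sigma_x;A_{i(k,+)})$. Apply Proposition~\ref{p:PathToMaximal} with target parameter $y$ twice: from $w$ one gets $\mu\colon w\to W_1$, from $x$ one gets $\delta\colon x\to W_2$, with $W_1,W_2\in(C_k)_{\geq y}\subseteq A_{i(k,+)}$ and both $\mu,\delta$ future $\fF$--stabilizing in $C_l$. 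Pick $W'\in A_{i(k,+)}$ above $W_1$ and $W_2$ and d-paths $\nu_1\colon W_1\to W'$, $\nu_2\colon W_2\to W'$; these lie in $A_{i(k,+)}$ by d-convexity of $A_{i(k,+)}$, hence future $\fF$--stabilize in $A_{i(l,+)}$ by axiom (d) for $\cA$, hence in $C_l$ by cofinality of $A_{i(l,+)}$ with $C_l$ (Proposition~\ref{p:CkCofinal}) and the single-path case of the proof of Proposition~\ref{p:CofinalityPreservesStability}. By Proposition~\ref{p:PropertiesOfStabilizers}.(e),(\ref{i:PConcat}) the paths $\mu*\nu_1\colon w\to W'$ and $\delta*\nu_2\colon x\to W'$ still future $\fF$--stabilize in $C_l$. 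Now $\alpha*\mu*\nu_1$ and $\delta*\nu_2$ are both d-paths in $\vP_X(x,W')$, which is path-connected by the contractibility fact (as $W'\geq W_1\geq y$ lies above a point of $\St^+_\fF(\sigma_x;A_{i(k,+)})$), so $\alpha*\mu*\nu_1\sim\delta*\nu_2$ and therefore $\alpha*\mu*\nu_1$ future $\fF$--stabilizes in $C_l$ by Proposition~\ref{p:PropertiesOfStabilizers}.(\ref{i:PHtp}). Finally $\mu*\nu_1$ future $\fF$--stabilizes in $C_l$ and starts at $w=\alpha(1)$, so the cut-invariance lemma applied to $\eta=\alpha$, $\gamma=\mu*\nu_1$ yields that $\alpha$ future $\fF$--stabilizes in $C_l$. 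As $\alpha\in\vP_{C_k}$ was arbitrary, $(C_k,C_l)$ is future stable.

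The main obstacle is the contractibility fact. A priori $\vP_X(x,W)$ could split into several path-components --- this is precisely Dubut's phenomenon --- in which case the single stabilizing d-path out of $x$ furnished by Proposition~\ref{p:PathToMaximal} would be useless for a general $\alpha$. What makes the argument go through is that the amalgamation $\cC=\cA\cup\cB$ merges only $\cA$--components that are already ``equivalent'' as measured by stable d-path spaces, so Proposition~\ref{p:CrossStabilization} collapses the whole family $\{\vSP^+_\fF(A_i,A_{i(k,+)})\}_{i\in I\cap k}$ onto the contractible space $\vSP^+_\fF(A_{i(k,+)},A_{i(k,+)})$ and thereby kills the spurious path-components. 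Once that collapse is secured, everything else is routine bookkeeping with the stabilizer calculus of Proposition~\ref{p:PropertiesOfStabilizers}, the cofinality statements, and the cut-invariance lemma; in particular the case analysis on how $\alpha$ passes through the various $A_i$'s, which one might expect to need, is entirely avoided.
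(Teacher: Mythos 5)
Your proposal is correct and follows essentially the same route as the paper's proof: apply Proposition \ref{p:PathToMaximal} from both endpoints of $\alpha$, push the resulting paths to a common point $W'$ of $A_{i(k,+)}$, use the chain $\vP(x,W')\overset{\fF}{\simeq}\vSP^+_\fF(A_{i(x)},A_{i(k,+)})\overset{\fF}{\simeq}\vSP^+_\fF(A_{i(k,+)},A_{i(k,+)})\overset{\fF}{\simeq}\{*\}$ to get $\alpha*\mu*\nu_1\sim\delta*\nu_2$, and then transfer stability to $\alpha$ by a $2$-out-of-$3$ argument. Your ``cut-invariance lemma'' is exactly the paper's final homotopy-commutative triangle packaged as a separate statement (and your routing through $A_{i(l,+)}$ before invoking cofinality with $C_l$ quietly fixes a small slip in the paper's wording), so the two proofs coincide in substance.
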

\begin{proof}
	Fix $\alpha\in \vP_{C_k}$. Choose a point $z\in \St^+_\fF(\sigma_{\alpha(0)};A_{i(k,+)})$ and, applying Proposition \ref{p:PathToMaximal}, paths $\beta'\in \vP_X(\alpha(0),(C_k)_{\geq z})$, $\gamma'\in \vP_X(\alpha(1),(C_k)_{\geq z})$ that future $\fF$--stabilize in $C_l$. Next, choose $u\in (A_{i(k,+)})_{\geq \beta'(1),\gamma'(1)}$ and d-paths $\beta''\in \vP(\beta'(1),u)$, $\gamma''\in \vP(\gamma'(1),u)$. Since $\beta'',\gamma''\in \vP_{A_{i(k,+)}}$, they both future $\fF$--stabilize in $A_{i(k,+)}$ and hence in $C_l$ (\ref{p:CkCofinal}, \ref{p:CofinalityPreservesStability}); thus, also $\beta=\beta'*\beta''$ and $\gamma=\gamma'*\gamma''$ future $\fF$--stabilize in $C_l$ (\ref{p:PropertiesOfStabilizers}.(\ref{i:PConcat})). Choose $v\in \St^+_\fF(\beta,\gamma;C_l)$.

Since $u\geq z$, we have $u\in \St^+_\fF(\alpha(0);A_{i(k,+)})$ and then
\[
	\vP(\alpha(0),u)\buildrel\fF\over\simeq \vSP^+_\fF(A_{i},A_{i(k,+)})\buildrel\fF\over\simeq \vSP^+_\fF(A_{i(k,+)},A_{i(k,+)})\buildrel\fF\over\simeq\{*\},
\]
where $i\in I\cap k$ is the index such that $\alpha(0)\in A_i$. 
As a consequence, $\vP(\alpha(0),u)$ is path-connected and then $\alpha*\gamma\sim\beta$. Thus, for every d-path $\omega\in \vP_{(C_l)_\geq v}$,
 the diagram
\[
	\begin{diagram}
		\node{\vP(u,\omega(0))}
			\arrow{e,tb}{\vP(\beta,\omega)}{\simeq}
			\arrow{s,lr}{\vP(\gamma,\sigma_{\omega(0)})}{\simeq}
		\node{\vP(\alpha(0),\omega(1))}
	\\
		\node{\vP(\alpha(1),\omega(0))}
			\arrow{ne,r}{\vP(\alpha,\omega)}
	\end{diagram}
\]
commutes up to homotopy. Since $\omega(0)\geq v$ and $v\in \St^+_\fF(\beta,\gamma;C_l)$, the maps marked with $\simeq$ are $\fF$--equivalences. Thus, $\vP(\alpha,\omega)$ is also an $\fF$--equivalence and then $v\in \St^+_\fF(\alpha;C_l)\neq \emptyset$.
\end{proof}

\begin{proof}[Proof of Theorem \ref{t:SumOfComponentSystems}]
	For every $k\in K$, $C_k$ is future connected (\ref{p:CkCofinal}), d-convex (\ref{p:CkIsConvex}) and every pair $(C_k,C_l)$, $l\in K$, is future $\fF$--stable (\ref{p:CkStable}). Proposition \ref{p:CommonStableTraces} implies that
	\[
		\vSP^+(C_k,C_k)\buildrel\fF\over\simeq \vSP^+(A_{i(k,+)},A_{i(k,+)})\buildrel\fF\over\simeq \{*\},
	\]
	so $C_k$ is future $\fF$--trivial.
	
	If $\cA$ is closed, then the final point of $A_{i(k,+)}$ is a final in $C_k$. Hence, $\cC$ is closed.
\end{proof}

\begin{prp}\label{p:SumOfTotalSystem}
	If $\cA$ and $\cB$ are stable total $\fF$--component systems on $X$, then  $\cC=\cA\cup \cB$ is also a stable total $\fF$--component system.
\end{prp}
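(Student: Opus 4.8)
The plan is to derive the statement from Theorem \ref{t:SumOfComponentSystems}, its counterpart for the opposite d-space $X^{op}$, and the total stability of $\cA$; the only substantive new step is to exhibit a stabilizing pair for each ordered pair $(C_k,C_l)$.

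First I would observe that $\cC$ is a future $\fF$--component system by Theorem \ref{t:SumOfComponentSystems}. For the past direction I would apply that theorem to $X^{op}$: the relation $\sim$ on $I\sqcup J$, and hence the blocks $C_k$ and the index set $K$, depend only on which of the sets meet and not on orientation, so $\cC^{op}=\cA^{op}\cup\cB^{op}$; since $\cA^{op}$ and $\cB^{op}$ are finite future $\fF$--component systems on $X^{op}$, Theorem \ref{t:SumOfComponentSystems} yields that $\cC^{op}$ is a future $\fF$--component system on $X^{op}$, i.e.\ that $\cC$ is a past $\fF$--component system on $X$. What then remains of the definition of a stable total component system is condition (b): that every ordered pair $(C_k,C_l)$, $k,l\in K$, admits a stabilizing pair.

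To produce one, I would pass to the small components at the two ends. Fix $k,l\in K$, let $i(l,+)\in I\cap l$ be the unique maximal index (Proposition \ref{p:MaximalEnd}) and $i(k,-)\in I\cap k$ the unique minimal one (the dual of Proposition \ref{p:MaximalEnd}, applied to $X^{op}$), so that $A_{i(l,+)}$ is cofinal with $C_l$ and $A_{i(k,-)}$ is coinitial with $C_k$ (Proposition \ref{p:CkCofinal} and its dual); fix witnesses $y^{+}\in A_{i(l,+)}$ with $(A_{i(l,+)})_{\geq y^{+}}=(C_l)_{\geq y^{+}}$ and $y^{-}\in A_{i(k,-)}$ with $(A_{i(k,-)})_{\leq y^{-}}=(C_k)_{\leq y^{-}}$. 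Since $\cA$ is a total component system, the pair $(A_{i(k,-)},A_{i(l,+)})$ is totally $\fF$--stable, so it has a stabilizing pair $(a_0\in A_{i(k,-)},\,b_0\in A_{i(l,+)})$. Using future connectedness of $A_{i(l,+)}$ and past connectedness of $A_{i(k,-)}$, I would choose $b\in (A_{i(l,+)})_{\geq b_0,\,y^{+}}$ and $a\in (A_{i(k,-)})_{\leq a_0,\,y^{-}}$; by Proposition \ref{p:PropertiesOfStabilizingPairs}.(b) and its dual, $(a,b)$ is again a stabilizing pair of $(A_{i(k,-)},A_{i(l,+)})$, so $b\in\St^{+}_{\fF}(\sigma_a;A_{i(l,+)})$ and $a\in\St^{-}_{\fF}(A_{i(k,-)};\sigma_b)$.

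The last step promotes these stabilizations to $C_l$ and $C_k$, exactly as in the proof of Proposition \ref{p:CommonStableTraces}. Since $b\geq y^{+}$ one has $(A_{i(l,+)})_{\geq b}=(C_l)_{\geq b}$, so Proposition \ref{p:PropertiesOfStabilizers}.(b) upgrades $b\in\St^{+}_{\fF}(\sigma_a;A_{i(l,+)})$ to $b\in\St^{+}_{\fF}(\sigma_a;C_l)$; dually, since $a\leq y^{-}$ one has $(A_{i(k,-)})_{\leq a}=(C_k)_{\leq a}$ and hence $a\in\St^{-}_{\fF}(C_k;\sigma_b)$. Thus $(a,b)$ is a stabilizing pair of $(C_k,C_l)$, and $\cC$ is a stable total $\fF$--component system. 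I do not expect a real obstacle here: no idea beyond Theorem \ref{t:SumOfComponentSystems} and the total stability of $\cA$ is needed. The points requiring care are the bookkeeping in the last two paragraphs --- keeping $A_{i(k,-)}$ coinitial with $C_k$ and $A_{i(l,+)}$ cofinal with $C_l$, so that a point low enough in $A_{i(k,-)}$ is low in $C_k$ and a point high enough in $A_{i(l,+)}$ is high in $C_l$, and checking via Proposition \ref{p:PropertiesOfStabilizingPairs}.(b) that pushing the endpoints apart preserves the stabilizing-pair property --- together with the observation at the start that $\cC^{op}$ is literally $\cA^{op}\cup\cB^{op}$, which is what makes the past half of the claim free.
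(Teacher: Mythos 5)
Your proposal is correct and follows essentially the same route as the paper's proof: both deduce the future and past parts from Theorem \ref{t:SumOfComponentSystems} (and its dual for $X^{op}$), then obtain a stabilizing pair of $(C_k,C_l)$ by taking a stabilizing pair of $(A_{i(k,-)},A_{i(l,+)})$ and moving its endpoints, via Proposition \ref{p:PropertiesOfStabilizingPairs}.(b), to points where coinitiality with $C_k$ and cofinality with $C_l$ let the stabilization transfer to the large components. Your version merely spells out a few steps the paper leaves implicit (the identification $\cC^{op}=\cA^{op}\cup\cB^{op}$ and the use of Proposition \ref{p:PropertiesOfStabilizers}.(b) in the last transfer).
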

\begin{proof}
	Theorem \ref{t:SumOfComponentSystems} implies that $\cC$ is both a future and a past $\fF$--component system. For every $k,l\in K$, there are points $x\in A_{i(k,-)}$ and $y\in A_{i(l,+)}$ such that $y\in \St^+_\fF(x;A_{i(l,+)})$ and $x\in \St_\fF^-(A_{i(k,-)};y)$. The index $i(k,-)$ denotes the unique minimal element of $I\cap k$.  Since $A_{i(l,+)}$ and $C_l$ are cofinal and $A_{i(k,-)}$ and $C_k$ are coinitial, there exist $x'\in (A_{i(k,-)})_{\leq x}$ and $y'\in (A_{i(l,+)})_{\geq y}$ such that $(A_{i(k,-)})_{\leq x'}=(C_k)_{\leq x'}$ and $(A_{i(l,+)})_{\geq y'}=(C_l)_{\geq y'}$. Now $x'\in \St^-_{\fF}(C_k;y')$ and $y'\in \St^+_{\fF}(x';C_l)$; therefore $(C_k,C_l)$ is totally stable.
\end{proof}

\begin{thm}\label{t:UniqueSystem}
	If $X$ admits a finite future (resp., past, total) $\fF$--component system, then there exists a coarsest $\fF$--component system on $X$. If $X$ admits a finite future (resp. past) closed $\fF$--component system, then the coarsest future (resp. past) $\fF$--component system is closed.
\end{thm}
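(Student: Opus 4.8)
Fix a finite future $\fF$--component system $\cA_0=\{A_i\}_{i\in I_0}$ on $X$; the past case is the future case for $X^{op}$, and the total case is handled identically with Proposition~\ref{p:SumOfTotalSystem} replacing Theorem~\ref{t:SumOfComponentSystems}. The starting observation is that if a future $\fF$--component system $\cD$ is coarser than $\cA_0$, then $\cD=\cA_0\cup\cD$: being coarser means precisely that each $A_i$ lies in a unique block of $\cD$, and then the equivalence classes defining the union just attach each $A_i$ to that block. In particular every block of such a $\cD$ is a union of blocks of $\cA_0$, so $\cD$ is encoded by a partition of the finite set $I_0$; hence there are only finitely many future $\fF$--component systems coarser than $\cA_0$. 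Call this finite set $\cP$. By Theorem~\ref{t:SumOfComponentSystems}, $\cP$ is closed under $\cup$: the union of two systems coarser than $\cA_0$ is a finite future $\fF$--component system, and it is again coarser than $\cA_0$. Since $\cA\cup\cB$ is always coarser than both $\cA$ and $\cB$ (immediate from $C_k=\bigcup_{i\in k\cap I}A_i$), forming the union of all members of $\cP$ one after another yields an element $\cA^{*}\in\cP$ coarser than every member of $\cP$, i.e.\ the minimum of $\cP$. If $\cA_0$ is closed, then every member of $\cP$ equals $\cA_0\cup(\text{itself})$, hence is closed by the last clause of Theorem~\ref{t:SumOfComponentSystems}, so $\cA^{*}$ is closed.

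Next, $\cA^{*}$ is coarser than every \emph{finite} future $\fF$--component system $\cB$: by Theorem~\ref{t:SumOfComponentSystems}, $\cA_0\cup\cB$ is a finite future $\fF$--component system coarser than $\cA_0$, hence lies in $\cP$, so $\cA^{*}$ is coarser than $\cA_0\cup\cB$, which in turn is coarser than $\cB$.

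The crux is that $\cA^{*}$ is coarser than \emph{every} future $\fF$--component system $\cB$, finite or not. The plan is to show that for an arbitrary such $\cB$ the partition $\cA^{*}\cup\cB$ --- which a priori only coarsens $\cA^{*}$ and $\cB$ as partitions --- is itself a future $\fF$--component system. Granting this, write $\cA^{*}=\{A^{*}_i\}_{i\in I^{*}}$; each block of $\cA^{*}\cup\cB$ is a union of $\cA^{*}$--blocks, so $\cA^{*}\cup\cB$ is a finite future $\fF$--component system coarser than $\cA^{*}$, whence $\cA^{*}\cup\cB=\cA^{*}$ by the minimality just established --- and this equality says exactly that every block of $\cB$ lies in a block of $\cA^{*}$. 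In particular, if $X$ admits a finite closed future system, then the $\cA^{*}$ constructed above is closed and is the coarsest among all future systems, so the coarsest future system is closed.

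So the whole matter comes down to extending Theorem~\ref{t:SumOfComponentSystems} to the case where one of the two systems ($\cA^{*}$) is finite while the other ($\cB$) need not be, and this is the main obstacle. The proof of Theorem~\ref{t:SumOfComponentSystems} is built on Propositions~\ref{p:Ends}--\ref{p:CkStable}, all of which ultimately rest on Proposition~\ref{p:Ends}. Of the two symmetric halves of that proposition, the one that produces, for each block $B_j$ of $\cB$, a unique $\cA^{*}$--block $A^{*}_{i(j)}$ with $(B_j)_{\ge y_j}\subseteq A^{*}_{i(j)}$, survives verbatim, since its proof only uses $|I^{*}|<\infty$. The problematic half --- producing, for each $A^{*}_i$, a unique block $B_{j(i)}$ of $\cB$ with $(A^{*}_i)_{\ge y_i}\subseteq B_{j(i)}$ --- genuinely uses finiteness of the index set of $\cB$ and can fail once $A^{*}_i$ has no maximal point. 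I would first settle the case that $\cA^{*}$ is \emph{closed}: then $(A^{*}_i)_{\ge y_i}$ may be taken to be the maximal point of $A^{*}_i$, the problematic half becomes trivial, and the proof of Theorem~\ref{t:SumOfComponentSystems} applies word for word. When $X$ admits no finite closed future system, one still needs a direct argument that no block of any future $\fF$--component system can meet two distinct blocks of $\cA^{*}$, which would otherwise contradict the minimality of $\cA^{*}$; carrying this out, with the attendant point-set bookkeeping along directed paths when the relevant $\cA^{*}$--blocks lack maximal points, is where I expect the genuine difficulty.
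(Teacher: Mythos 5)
Your first two paragraphs are, in substance, the whole of the paper's proof. The paper argues exactly along your lines: any system coarser than the given finite system $\cA_0$ has blocks that are unions of blocks of $\cA_0$, so there are only finitely many of them and one may pick a system $\cA$ with nothing strictly coarser than it (the paper calls this a ``minimal'' system); if $\cA$ and $\cB$ are two such systems, Theorem \ref{t:SumOfComponentSystems} (resp.\ Proposition \ref{p:SumOfTotalSystem} for the total case) makes $\cA\cup\cB$ a component system coarser than both, forcing $\cA=\cA\cup\cB=\cB$; and if some finite $\cB$ is closed, then $\cA=\cA\cup\cB$ is closed by the last clause of Theorem \ref{t:SumOfComponentSystems}. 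This is precisely your construction of $\cA^{*}$, your argument that it dominates every finite system, and your treatment of closedness, so on this part you and the paper coincide.

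The ``crux'' you isolate --- that $\cA^{*}$ must also be coarser than arbitrary, possibly infinite, component systems --- is simply not addressed in the paper: the union construction is only ever applied to two finite systems, and the proof passes directly from ``unique minimal system'' to ``coarsest system'' without comparing $\cA^{*}$ with systems to which Theorem \ref{t:SumOfComponentSystems} does not apply (indeed, even its uniqueness step tacitly assumes any minimal system is finite). So your diagnosis is legitimate relative to the paper's literal definition of coarsest (coarser than \emph{any} other system), and your identification of the obstruction --- the half of Proposition \ref{p:Ends} that produces $j(i)$, which uses finiteness of the second index set and is only salvaged when the blocks of $\cA^{*}$ have final points --- is accurate; but neither your sketch (the closed case ``word for word'' claim is plausible yet unchecked against Propositions \ref{p:MaximalEnd}--\ref{p:CkStable} with an infinite index set, and the non-closed case is left entirely open) nor the paper actually carries this step out. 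In short: what you prove is what the paper proves, by the same route; the additional step you flag as the genuine difficulty is one the paper omits rather than solves, so you will not find the missing argument there.
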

\begin{proof}
	The existence of a finite future $\fF$--component system implies the existence of a minimal system $\cA$, i.e., such that no future $\fF$--component system is coarser than $\cA$.
	If $\cA$ and $\cB$ are minimal future $\fF$--components systems, then, by Theorem \ref{t:SumOfComponentSystems},  the sum $\cA\cup \cB$ is a future component system that is coarser than both $\cA$ and $\cB$. Thus, $\cA\cup\cB=\cA=\cB$ is a unique minimal system.
	The same argument applies for past component systems and, by Proposition \ref{p:SumOfTotalSystem}, for total systems.
	
	If $\cA$ is the coarsest future $\fF$--component system on $X$, and $\cB$ is a closed one, then $\cA=\cA\cup \cB$ is closed.
\end{proof}

\begin{rem}\label{r:Counterexample}
Theorem \ref{t:SumOfComponentSystems} is not valid for infinite component systems.	The d-space
 \begin{equation}
	X=\vI \times \vR/\{(1,y)\sim (0,y+1)\}_{y\in \R}
\end{equation}
admits total component systems
\begin{equation}
	\cA=\{[0,1)\times [n,n+1)\}_{n\in\Z} \qquad \text{and} \qquad \cB=\{[0,1)\times [n-\tfrac{1}{2},n+\tfrac{1}{2})\}_{n\in\Z}
\end{equation}
but $\cA\cup \cB=\{X\}$ is neither a future nor a past component system, since $(X,X)$ is neither future nor past stable. In particular, $X$ admits no coarsest component system.
\end{rem}

The coarsest stable future (resp. past, total) $\fF$--component system on $X$ will be denoted by $\cM^+_\fF(X)$ (resp. $\cM^-_\fF(X)$, $\cM^\pm_\fF(X)$).

\section{Component systems on pre-cubical sets}
\label{s:Cubical}

In this section, we prove that every finite pre-cubical set $K$ with no loops admits a finite total (resp. past, total) $\fF$--component system and hence, by Theorem \ref{t:UniqueSystem}, a coarsest one. Moreover, we show that the coarsest finite future (resp. past) $\fF$--component system on $|K|$ is closed. Since every $\fF_\infty$--component system is an $\fF$--component system for every equivalence system $\fF$, we assume $\fF=\fF_\infty$.

Here we recall basic definitions; for a survey on pre-cubical sets and their applications in concurrency, see eg. \cite{FGHMR,FGR}.
\begin{df}
	\emph{A pre-cubical set} $K$ is a sequence of disjoint sets $(K[n])_{n\geq 0}$ equipped with \emph{face maps} $d^\varepsilon_i:K[n]\to K[n-1]$, $1\leq i\leq n$, $\varepsilon\in\{0,1\}$, satisfying the \emph{pre-cubical relations} $d^\varepsilon_i d^\eta_j=d^\eta_{j-1}d^\varepsilon_i$ for $i<j$. The elements of $K[n]$ will be called \emph{$n$--cubes}.  \emph{The geometric realization} $|K|$ of a pre-cubical set $K$ is the quotient d-space
	\begin{equation}
		|K|=\coprod_{n\geq 0} K[n]\times \vI^n/\sim
	\end{equation}	
	where $\sim$ is generated by
	\begin{equation}
		 [d^\varepsilon_i(c),(t_1,\dots,t_{n-1})]\sim [c,(t_1,\dots,t_{i-1},\varepsilon,t_i,\dots,t_{n-1})]
	\end{equation}	
	for $n\geq 0$, $c\in K[n]$, $1\leq i\leq n$ and $\varepsilon\in\{0,1\}$.
\end{df}

Let $K$ be a pre-cubical set. For an $n$--cube $c\in K[n]$, let $d^0(c)=(d^0_1)^n(c)\in K[0]$ and $d^1(c)=(d^1_1)^n(c)\in K[0]$ denote the initial and the final vertex of $c$, respectively.

\begin{exa}\label{x:nCube}
\emph{The standard $n$--cube $\square^n$} is the pre-cubical set whose $k$--cubes are defined by
\begin{equation}
	\square^n[k]=\{(a_1,\dots,a_n)\in \{0,*,1\}^n\; | \; \card(\{i: a_{i}=*\})=k\},
\end{equation}  
and the face map $d^\varepsilon_i$ converts the $i$--th occurrence of $*$ into $\varepsilon$. The geometric realization of $\square^n$ is d-homeomorphic to the directed $n$--cube $\vI^n$.
\end{exa}

We say that a pre-cubical set \emph{has no loops} if the following conditions are satisfied, which are equivalent:
\begin{itemize}
	\item{$|K|$ has no loops,}
	\item{there is no non-empty sequence of $1$--cubes $e_1,\dots,e_n$ such that $d^1(e_n)=d^0(e_1)$ and $d^1(e_i)=d^0(e_{i+1})$ for $1\leq i<n$.}
\end{itemize}

Every point $x\in |K|$ has a unique presentation $x=[c;(t_1,\dots,t_n)]$ such that $c\in K[n]$ and $0< t_i < 1$ for all $i$. For every $n$--cube $c\in K[n]$, define the set
\begin{equation}\label{e:Uc}
        U_c=\{[c;(t_0,\dots,t_n)]\;|\; \forall_{i}\; 0<t_i<1 \} \subseteq |K|
\end{equation}
For a vertex ($0$--cube) $v\in K[0]$, let
\begin{equation}
       V^+_v=\bigcup_{c\in K\;|\; d^1(c)=v} U_c,\qquad V^-_v=\bigcup_{c\in K\;|\; d^0(c)=v} U_c.
\end{equation}
The point $v=[v;()]$ is a final point of $V^+_v$ and an initial point of $V^-_v$.

\begin{exa}\label{x:UVCube}
	For $K=\square^n$,
	\begin{itemize}
		\item{$U_{(a_1,\dots,a_n)}=J_{a_1}\times \dots\times J_{a_n}\subseteq \vI^n$, where $J_0=\{0\}$, $J_1=\{1\}$ and $J_*=(0,1)$.}
		\item{$V^+_{(a_1,\dots,a_n)}=J^+_{a_1}\times \dots\times J^+_{a_n}\subseteq \vI^n$, where $J^+_0=\{0\}$, $J^+_1=(0,1]$.}
		\item{$V^-_{(a_1,\dots,a_n)}=J^-_{a_1}\times \dots\times J^-_{a_n}\subseteq \vI^n$, where $J^-_0=[0,1)$, $J^-_1=\{1\}$.}
	\end{itemize}
\end{exa}

The main result is the following theorem:

\begin{thm}\label{t:InteriorSystem}
	Let $K$ be a pre-cubical set with no loops. For every equivalence system $\fF$,
	\begin{enumerate}[\normalfont (a)]
	\item{$\{U_c\}_{c\in \bigcup K[n]}$ is a total $\fF$--component system on $|K|$,}
	\item{$\{V^+_v\}_{v\in K[0]}$ is a future $\fF$--component system on $|K|$,}
	\item{$\{V^-_v\}_{v\in K[0]}$ is a past $\fF$--component system on $|K|$.}
	\end{enumerate}		
\end{thm}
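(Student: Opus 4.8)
We may take $\fF=\fF_\infty$, as noted above, so that $\fF$--equivalence means weak homotopy equivalence and $\fF$--contractibility means weak contractibility. Part (c) is part (b) for the opposite pre-cubical set: $V^-_v\subseteq|K|$ is exactly $V^+_v\subseteq|K^{op}|=|K|^{op}$, and d-convexity, future/past connectedness, $\fF$--triviality and stability of pairs are all interchanged by $X\mapsto X^{op}$; so it suffices to prove (b) and (a), i.e.\ to verify the four conditions of Definition \ref{d:StableComponentSystem} for $\{V^+_v\}$ and for $\{U_c\}$. The single elementary input drawn from the no-loops hypothesis is: for every $c\in K[n]$ the attaching map $\{c\}\times\vI^n\to|K|$ is injective on every pair of comparable vertices of $\vI^n$ (otherwise the sub-edge joining them would realize a non-constant directed loop); equivalently, for a face $F$ of $c$ one has $d^1(F)=d^1(c)$ iff $F$ is an ``upper'' face of $c$, and dually for $d^0$. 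Using this I would argue as follows for $\{V^+_v\}$. The vertex $v$ is a final point of $V^+_v$ (from $x\in U_c$ with $d^1(c)=v$ the straight segment to the top corner of $c$ stays in $U_c\cup\{v\}$), so $V^+_v$ is future connected and the system is closed. For d-convexity one checks that along any d-path $\alpha$ the vertex $d^1(\mathrm{carr}(\alpha(s)))$ is non-decreasing in $s$ (inside a closed cell the carrier's final vertex is constant, and at a transition $d^1$ of the exited upper face equals $d^1$ of the old cell and is $\le d^1$ of the new one); since this vertex is $v$ at both ends of a path in $V^+_v$, it is $v$ throughout. For $\fF$--triviality, the same transition analysis together with the injectivity input shows that \emph{every} d-path from $a\in U_c$, $d^1(c)=v$, to $v$ stays inside $\overline{U_c}\cong\vI^{\dim c}$ (leaving through an upper face $F$ forces $d^1(F)=v$, while re-using $F$ as a lower face of the next cell forces $d^1(F)\ne v$), so $\vP_{|K|}(a,v)=\vP_{\overline{U_c}}(a,v)$ is a convex set of non-decreasing paths, hence contractible; thus $\vSP^+_\fF(V^+_v,V^+_v)\simeq\vP_{|K|}(a,v)\simeq\{*\}$, and $(V^+_v,V^+_v)$ is future stable because for $\alpha\in\vP_{V^+_v}$ both $\vP_{|K|}(\alpha(0),v)$ and $\vP_{|K|}(\alpha(1),v)$ are weakly contractible and so $\vP(\alpha,v)$ is a weak equivalence.

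The crux is condition (d): each pair $(V^+_u,V^+_v)$ is future stable. Since $v$ is the final point of $V^+_v$, this says exactly that for every $\alpha\in\vP_{V^+_u}$ the map $\vP(\alpha,v)\colon\vP_{|K|}(\alpha(1),v)\to\vP_{|K|}(\alpha(0),v)$, $\omega\mapsto\alpha*\omega$, is a weak homotopy equivalence. If $\alpha(0)\not\le v$ this is a map between empty spaces; and $\alpha(0)\le v$ forces $\alpha(1)\le v$, because $\alpha(0)\le v$ implies $d^1(\mathrm{carr}(\alpha(0)))=u\le v$ (follow a parallel route through the cells met by a d-path $\alpha(0)\to v$, using the injectivity input) while $\alpha(1)\le u$ since $u$ is final in $V^+_u$. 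So it remains to treat $\alpha$ with $\alpha(0)\le v$. Writing $\alpha$ as a concatenation of d-paths each contained in a single closed cell (possible by compactness) and combining Proposition \ref{p:PropertiesOfStabilizers}.(\ref{i:PConcat}) with the 2--out--of--3 property of $\fF$, we reduce to the case $\alpha\subseteq\overline{U_c}$ for one cell $c$; the statement to prove is then that prepending a directed path lying inside a closed cube does not change, up to weak equivalence, the d-path space to $v$. I expect this last step to be the main obstacle. I would prove it either directly, by deforming $\vP_{|K|}(\alpha(0),v)$ so as to push the initial portion of each d-path onto $\alpha$ — well defined because that portion lives in $\overline{U_c}\cong\vI^{\dim c}$, where directed paths with fixed endpoints form a convex, hence contractible, set — or by transporting the question to the combinatorial ``cube-path'' model of $\vT_{|K|}(-,v)$, in which moving the source within a cube is visibly a homotopy equivalence. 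This establishes (b), and (c) follows by duality.

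For (a), the family $\{U_c\}$ refines $\{V^+_v\}$ and $\{V^-_v\}$ (indeed $U_c\subseteq V^+_{d^1(c)}\cap V^-_{d^0(c)}$). Each $U_c\cong(0,1)^{\dim c}$ is d-convex — a d-path with both endpoints in $U_c$ cannot leave $\overline{U_c}$ and return, again by the transition analysis and no loops — and is both future and past connected; future $\fF$--triviality of $U_c$ and future $\fF$--stability of the pairs $(U_c,U_{c'})$ are obtained exactly as for $\{V^+_v\}$, with the top vertex of the ambient cube, and the stability lemma with $V^+$ in place of a final point, and dually for the past. It remains to produce a stabilizing pair for each $(U_c,U_{c'})$: by the stability lemma the future stabilizer $\St^+_\fF(\sigma_a;U_{c'})$ contains all sufficiently high points of $U_{c'}$ and the past stabilizer $\St^-_\fF(U_c;\sigma_b)$ all sufficiently low points of $U_c$, so, choosing $a\in U_c$ low enough and $b\in U_{c'}$ high enough (and using Proposition \ref{p:PropertiesOfStabilizingPairs}.(b) to lower $a$ and raise $b$), $(a,b)$ is a stabilizing pair; hence $(U_c,U_{c'})$ is totally $\fF$--stable and $\{U_c\}$ is a total $\fF$--component system. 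Finally, since $K$ is finite all three systems are finite, so Theorem \ref{t:UniqueSystem} shows $|K|$ carries coarsest future, past and total $\fF$--component systems.

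The geometric facts invoked above — monotonicity of $d^1(\mathrm{carr}(\alpha(s)))$ along d-paths, the confinement of d-paths from $a\in U_c$ to $d^1(c)$ inside $\overline{U_c}$, the impossibility of re-entering an open cell, and the reduction of the stability of pairs to the one-cube statement — are all consequences of the no-loops hypothesis and of the local structure of $|K|$ as a union of open cubes; the one genuinely technical point, and the expected obstacle, is the invariance (up to weak equivalence) of the d-path space to $v$ under sliding the source point inside a closed cube, for which the cube-path model of trace spaces is the natural tool.
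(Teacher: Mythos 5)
Your overall skeleton matches the paper's: everything is reduced to the statement that pre-composition with a d-path confined to one cube induces an $\fF$--equivalence on d-path spaces, and your peripheral verifications (d-convexity and connectedness of the pieces, confinement of a path in $V^+_v$ to a single closed cube via monotonicity of the final vertex of the carrier plus the no-loop hypothesis, duality for (c)) are sound and in fact more explicit than the paper's ``easy to check''. But the central lemma is exactly where your proposal stops. You reduce to ``prepending a directed path lying inside a closed cube does not change, up to weak equivalence, the d-path space to $v$'', declare it the expected main obstacle, and offer only two unexecuted strategies (a deformation pushing initial segments onto $\alpha$, for which the continuous choice of cut points is precisely the difficulty, or an unspecified combinatorial cube-path model). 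This is the actual content of the theorem: the paper devotes Propositions \ref{p:CubPathHEq} and \ref{p:CubPaths} to it, proving it by an explicit global deformation $h^K_s$ of $|K|$ (a monotone reparametrization of all cube coordinates squashing the interval below a level $r$ while fixing $0$ and $[r,1]$), which produces a homotopy inverse to concatenation. Note moreover that your reduced statement is false as phrased: on $\partial\vI^2$, prepending the bottom-edge path $\alpha$ from $(0,0)$ to $(\tfrac12,0)$ (a d-path inside a closed cube) sends the contractible space $\vP_{\partial\vI^2}((\tfrac12,0),(1,1))$ to $\vP_{\partial\vI^2}((0,0),(1,1))\simeq S^0$, so $\vP(\alpha,v)$ is not even a $\pi_0$--bijection. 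The hypothesis that saves it, and that any proof must use, is the one in Proposition \ref{p:CubPathHEq}: in the presentation $\alpha=[c;\gamma]$ one needs $\gamma_i(0)>0$, i.e.\ $\alpha(0)$ lies in the open cell of the cube containing $\alpha$ (available in your situation because $\alpha\in\vP_{V^+_u}$, but never invoked in your sketches).

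For part (a) the gap is wider still. The target component $U_{c'}$ has no final point, so stability of $(U_c,U_{c'})$ cannot be reduced to a single map $\vP(\alpha,z)$ with $z$ a vertex: one must exhibit $y\in U_{c'}$ such that $\vP(\alpha,\beta)$ is an $\fF$--equivalence for \emph{every} $\beta\in\vP_{(U_{c'})_{\geq y}}$, and this two-sided statement holds only under a quantitative threshold, namely $\gamma_i(1)\leq r\leq \delta_j(0)$ as in Proposition \ref{p:CubPaths}, after which one takes $y=[c';(r,\dots,r)]$ with $r=\max_i\gamma_i(1)<1$ (and similarly for the stabilizing pair, e.g.\ the points with all coordinates $\tfrac12$). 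Your phrase ``with the top vertex of the ambient cube, and the stability lemma with $V^+$ in place of a final point'' does not supply this: $d^1(c')$ is not a point of $U_{c'}$, and ``low enough''/``high enough'' is exactly the threshold condition that the two-sided lemma encodes; so (a) rests on an even stronger, and equally unproven, version of the lemma you isolated. (Minor points: $\fF$--triviality of $V^+_v$ is free once stability is known, since $\vSP^+_\fF(V^+_v,V^+_v)\simeq\vP(v,v)=\{*\}$, so your confinement-plus-convexity argument, which also ignores possible identifications of incomparable faces of a single closed cube, is unnecessary; and the closing appeal to Theorem \ref{t:UniqueSystem} needs $K$ finite, which the theorem itself does not assume.)
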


The proof of Theorem \ref{t:InteriorSystem} uses the following:

\begin{prp}\label{p:CubPathHEq}
	Let $K$ be a pre-cubical set. Let $\gamma=(\gamma_1,\dots,\gamma_n)\in \vP_{\vI^n}$ denote a path such that $\gamma_i(0)>0$ for all $1\leq i\leq n$ and, for some $0\leq r\leq 1$, $\gamma_i(1)\leq r$ for all $i$. Fix $a\in  K[n]$ and denote $\alpha=[a;\gamma]\in \vP_{|K|}$. Then, for every $z=[b;(t_1,\dots,t_m)]$, $b\in K[m]$, such that $r\leq t_i$ for all $i$, the map
	\[
		F=\vP(\alpha,z): \vP_{|K|}(\alpha(1),z)\ni \omega \mapsto \alpha * \omega \in \vP_{|K|}(\alpha(0),z)
	\]
	is a homotopy equivalence.
\end{prp}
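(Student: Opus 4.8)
The plan is to show that $F=\vP(\alpha,z)$ is a homotopy equivalence by exhibiting an explicit homotopy inverse built from ``pushing paths back'' along the coordinate directions, and then checking the two composites are homotopic to the identity. The key geometric fact is that inside a single cube $\vI^n$ the directed structure is a product of intervals, so we have a lot of room to deformation-retract: concatenating with $\alpha$ essentially only moves the starting point around inside a region that is directed-contractible onto the actual start $\alpha(0)$ relative to its end.

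First I would reduce to a statement entirely inside the single cube $\vI^n$. The path $\alpha=[a;\gamma]$ stays in the closed cube $a\times\vI^n$ (it never needs a face identification since $\gamma_i(0)>0$; it may hit the boundary only at its endpoint, where $\gamma_i(1)\le r$). Any d-path $\omega$ from $\alpha(1)$ to $z$ leaves this cube in general, so the reduction is not that $F$ is an intracubical map, but rather that the prepending operation factors through the subspace of paths that begin by traversing the ``corner region'' $R=\{x\in\vI^n: x_i\le r \text{ or } x \text{ near } \alpha(1)\}$ in a controlled way. Concretely, I would use that $\alpha(1)=[a;(\gamma_1(1),\dots,\gamma_n(1))]$ with all coordinates $\le r$, and that $z$ has all coordinates $\ge r$; hence every d-path $\alpha*\omega$ from $\alpha(0)$ to $z$ has an initial segment lying in $a\times\vI^n$ that is a d-path from $\alpha(0)$ to the first exit point, whose coordinates are coordinatewise $\ge$ those of $\alpha(0)$.

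The core step is the homotopy equivalence $\vP_{\vI^n}(p,q)\simeq \vP_{\vI^n}(p',q)$ whenever $p\le p'\le q$ in $\vI^n$ and $p,p'$ agree ``enough'' — more precisely, the map $\vP(\beta,q)$ induced by any d-path $\beta$ from $p$ to $p'$ inside $\vI^n$ is a homotopy equivalence, because directed path spaces in $\vI^n$ between fixed endpoints are contractible (they are products of spaces of non-decreasing functions, each of which is convex) and so any such reparametrization/prepending map is a map between contractible spaces. I would leverage Proposition~\ref{p:PropertiesOfStabilizers} together with the product d-structure of $\vR^n$: the relevant stabilizer computations are trivial in $\vI^n$ since every subcube has a maximal point and all path spaces there are contractible. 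Then the general case follows by writing $\omega$ as ``an intracubical initial segment followed by the rest'' and observing that the prepending of $\alpha$ only alters the intracubical initial segment, within a contractible space of choices, so $F$ is a homotopy equivalence by a fibration-type argument (the restriction-to-the-tail map $\vP_{|K|}(\alpha(1),z)\to \bigsqcup \vP_{|K|}(\text{exit point},z)$ and its counterpart for $\alpha(0)$ fit into a commuting square whose vertical maps are homotopy equivalences because the fibers — spaces of intracubical connecting paths — are contractible).

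The main obstacle I expect is making the ``factor through the tail'' argument precise: one must choose, continuously in $\omega$, a point where the path $\omega$ (resp. $\alpha*\omega$) first reaches a coordinate configuration that is coordinatewise $\ge$ some fixed threshold, and this ``first exit'' function is not continuous in general. The standard fix — and what I would carry out — is to avoid choosing an exit point altogether and instead build the homotopy inverse $G$ and the homotopies $GF\simeq\id$, $FG\simeq\id$ directly by a coordinatewise ``straightening'' of the first portion of each path: define $G(\omega')$ by cutting off a suitable initial fraction of $\omega'$ (parametrized so the cut happens before $\omega'$ leaves $a\times\vI^n$, which is possible since $\alpha$ traverses a definite initial parameter interval) and replacing it by a linear path back to $\alpha(1)$ inside $\vI^n$; continuity is then manifest, and the composite homotopies are linear homotopies of non-decreasing functions in each coordinate, hence stay in $\vP_{|K|}$. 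This is the analogue, in the one-cube setting, of the contractibility arguments used throughout Section~3, and it is routine once set up, so I would state it and relegate the coordinatewise verification to a short computation.
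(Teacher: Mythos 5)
There is a genuine gap, and it sits exactly where you predicted trouble: the construction of the homotopy inverse $G$. Your fix --- ``cut off a suitable initial fraction of $\omega'$ (parametrized so the cut happens before $\omega'$ leaves $a\times\vI^n$) and replace it by a linear path back to $\alpha(1)$'' --- is not available, because the ``definite initial parameter interval'' you invoke is a property of paths in the \emph{image} of $F$ (paths of the form $\alpha*\omega$ spend parameter time $[0,\tfrac12]$ tracing $\alpha$ inside the carrier cube), not of arbitrary elements of the codomain $\vP_{|K|}(\alpha(0),z)$. A general d-path starting at $\alpha(0)$ may reach the upper boundary of the cube of $a$ and leave it after an arbitrarily short parameter time, so there is no fraction that can be cut off uniformly; any choice of how much to cut must depend on $\omega'$, and that dependence is exactly the discontinuous ``first exit'' datum you set out to avoid. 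The same problem undermines the ``fibration-type'' square with the restriction-to-the-tail maps: the exit point (and even the face through which the path exits) does not vary continuously with the path, so that square is not defined. The intracubical contractibility facts you cite are true but do not by themselves bridge the interface between the initial cube and the rest of $|K|$.

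The paper avoids any decomposition of paths by deforming the ambient space instead. Setting $q=\min_i\gamma_i(0)$, it chooses a non-decreasing $h:[0,1]\to[0,1]$ with $h(q)=r$ and $h=\mathrm{id}$ on $\{0\}\cup[r,1]$, and puts $h_s=(1-s)\mathrm{id}+sh$. Since $h_s(0)=0$ and $h_s(1)=1$, applying $h_s$ coordinatewise in every cube gives a well-defined global self-deformation $h^K_s$ of $|K|$ through d-maps; because all coordinates of $z$ are $\geq r$ one has $h^K_s(z)=z$, while $h^K_1(\alpha(0))=[a;(r,\dots,r)]$. Then $G(\omega)=h^K_1\circ\omega$ is manifestly continuous on all of $\vP_{|K|}(\alpha(0),z)$, and the homotopies $G\circ F\simeq\mathrm{id}$ and $F\circ G\simeq\mathrm{id}$ are assembled from $h^K_s$ together with a contraction of $\alpha$ to its endpoint; the case $\gamma_i(1)\leq r$ (rather than $=r$) is then recovered by a two-out-of-three argument using auxiliary intracubical paths into $(r,\dots,r)$. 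If you want to salvage your outline, you would need to replace the path-surgery step by some such globally defined deformation; as written, the key map $G$ is not constructed.
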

\begin{proof}
	First we will consider the case when $\gamma_i(1)=r$ for all $i$.
	Denote $q=\min_{1\leq i\leq n} \gamma_i(0)$. Let $h:[0,1]\to[0,1]$ be a non-decreasing continuous function satisfying  $h(q)=r$ and $h(x)=x$ for $x\in \{0\}\cup [r,1]$ and let $h_s(x)=(1-s)x+sh(x)$. The function $h_s$ induces a self-deformation of $|K|$ via continuous d-maps, given by the formula
	\[
        h_s^K: |K|\ni [c;(t_1,\dots,t_k)] \mapsto [c; h_s(t_1),\dots, h_s(t_k))] \in |K|
    \]
    for all $k\geq 0$ and $c\in K[k]$. Clearly $h_1^K(\alpha(0))=\alpha(1)=[a; (r,\dots,r)]$ and $h_1^K(z)=z$, so we can define the continuous map
	\[
		G:\vP_{|K|}(\alpha(0),z) \ni  \omega \mapsto h_1^K\circ \omega \in \vP_{|K|}(\alpha(1),z).
	\]
	We will show that $F$ and $G$ are homotopy inverses. We have
	\[
		(G\circ F)(\omega)=G(\alpha*\omega)=h^K_1\circ (\alpha*\omega)=(h^K_1\circ \alpha)*(h^K_1\circ \omega)=\sigma_{\alpha(1)}*(h^K_1\circ\omega).
	\]
	Thus, $G\circ F$ is homotopic to the map $\omega\mapsto h^K_1\circ \omega$ and then, by the self-deformation $h^K_s$, to the identity on $\vP(\alpha(1),z)$.
	Next,
	\[
		(F\circ G)(\omega)=F(h^K_1\circ \omega) = \alpha * (h^K_1\circ \omega)
	\]
	The formula
	\[
		H(\omega,s) = (c;(1-s)\alpha+s\sigma_{\alpha(0)})*(h^K_{1-s}\circ \omega)
	\]
	defines the homotopy between $F\circ G$ and $\omega \mapsto \sigma_{\alpha(0)}*\omega$, and the latter map is homotopic to the identity on $\vP(\alpha(0),z)$.
	
	To prove the general case, choose any paths $\delta\in\vP_{\vI^n}(\gamma(0),(r,\dots,r))$, $\delta'\in\vP_{\vI^n}(\gamma(1),(r,\dots,r))$. Since $\delta\sim \gamma*\delta'$, the diagram
\begin{equation}
	\begin{diagram}
		\node{}
		\node{\vP_{|K|}([a;(r,\dots,r)],z)}
			\arrow{se,tb}{\vP([a;\delta],z)}{\simeq}
			\arrow{sw,tb}{\vP([a;\delta'],z)}{\simeq}
		\node{}
	\\
		\node{\vP_{|K|}([a;\gamma(1)],z)}
			\arrow[2]{e,t}{\vP([a;\gamma],z)}
		\node{}
		\node{\vP_{|K|}([a;\gamma(0)],z)}
	\end{diagram}
\end{equation}
commutes up to homotopy. It is already shown that the upper maps are homotopy equivalences.
Thus, $\vP_{|K|}([c;\gamma],z)=\vP_{|K|}(\alpha,z)$ is also a homotopy equivalence.
\end{proof}

A slight generalization of Proposition \ref{p:CubPathHEq} reads as follows:
\begin{prp}\label{p:CubPaths}
	Let $K$ be a pre-cubical set with no loops. Choose  $a\in K[n]$, $b\in K[m]$, $\gamma\in \vP_{\vI^n}$, $\delta\in \vP_{\vI^m}$ and let $\alpha=(a;\gamma)$, $\beta=(b;\delta)$. Assume that there exists $0<r<1$ such that $0<\gamma_i(1)\leq r\leq \delta_{j}(0)<1$ for all $1\leq i\leq n$ and $1\leq j\leq m$. Then the map
	 \[
	 	\vP_{|K|}(\alpha,\beta):\vP_{|K|}(\alpha(1),\beta(0)) \to \vP_{|K|}(\alpha(0),\beta(1)) \tag{*}
	 \]
	 is a homotopy equivalence.
\end{prp}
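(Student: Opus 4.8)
The plan is to factor the concatenation map $\vP_{|K|}(\alpha,\beta)$ as a composite of two maps, each of which is a homotopy equivalence by Proposition~\ref{p:CubPathHEq} --- the second of the two only after passing to the opposite d-space. Concretely, up to inserting constant paths and applying non-decreasing reparametrizations (operations that do not change the homotopy class of a map of path spaces) one has
\[
	\vP_{|K|}(\alpha,\beta)\;\simeq\;\vP_{|K|}(\alpha,\sigma_{\beta(1)})\circ\vP_{|K|}(\sigma_{\alpha(1)},\beta),
\]
where $\vP_{|K|}(\sigma_{\alpha(1)},\beta)\colon\vP_{|K|}(\alpha(1),\beta(0))\to\vP_{|K|}(\alpha(1),\beta(1))$ is $\omega\mapsto\omega*\beta$ and $\vP_{|K|}(\alpha,\sigma_{\beta(1)})\colon\vP_{|K|}(\alpha(1),\beta(1))\to\vP_{|K|}(\alpha(0),\beta(1))$ is $\psi\mapsto\alpha*\psi$. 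Since a composite of homotopy equivalences is one, and a map homotopic to a homotopy equivalence is one, it suffices to treat the two factors separately.

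For $\vP_{|K|}(\alpha,\sigma_{\beta(1)})$: because $\delta$ is directed, its coordinates are non-decreasing, so $\beta(1)=[b;(\delta_1(1),\dots,\delta_m(1))]$ is the normal form of $\beta(1)$ and $r\le\delta_j(0)\le\delta_j(1)<1$ for every $j$; in particular every coordinate of $\beta(1)$ is $\ge r$. Since the coordinates of $\gamma$ are positive and $\le r$, Proposition~\ref{p:CubPathHEq} applies verbatim with $z=\beta(1)$ and shows that $\vP_{|K|}(\alpha,\sigma_{\beta(1)})$ is a homotopy equivalence.

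For $\vP_{|K|}(\sigma_{\alpha(1)},\beta)$ I would pass to the opposite d-space. The space $|K|^{op}$ is the geometric realization of the pre-cubical set $K^{op}$ having the same cubes as $K$ and face maps reindexed and reoriented so as to preserve the pre-cubical identities; $K^{op}$ again has no loops. Reversing paths, together with the coordinate reversal $(t_1,\dots,t_k)\mapsto(1-t_1,\dots,1-t_k)$ on cubes, identifies $\vP_{|K|}(\sigma_{\alpha(1)},\beta)$ with a concatenation map of exactly the type covered by Proposition~\ref{p:CubPathHEq}. Under this reversal, with $r':=1-r\in(0,1)$, the directed path $\delta$ (coordinates in $[r,1)$) becomes a directed path whose coordinates lie in $(0,1-r]$ --- hence positive and $\le r'$ --- while the point $\alpha(1)$ (coordinates $\le r$) becomes a point all of whose coordinates are $\ge r'$. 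Thus Proposition~\ref{p:CubPathHEq}, applied to $K^{op}$ with parameter $r'$, shows that $\vP_{|K|}(\sigma_{\alpha(1)},\beta)$ is a homotopy equivalence as well, and the displayed factorization then gives the claim.

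I expect the only delicate point to be this last step: making the identification $|K|^{op}=|K^{op}|$ precise, checking that $K^{op}$ is a genuine pre-cubical set with no loops, and verifying that reversing paths and flipping coordinates converts the right-concatenation map into exactly the left-concatenation map that Proposition~\ref{p:CubPathHEq} treats, with all the inequalities on the coordinates of $\gamma$, $\delta$, $\alpha(1)$, $\beta(1)$ transforming correctly --- here one uses that the coordinates of $\gamma$ are $>0$ and those of $\delta$ are $<1$, the dual of the positivity hypothesis in Proposition~\ref{p:CubPathHEq}. One could also bypass the opposite d-space and argue in one step, imitating the proof of Proposition~\ref{p:CubPathHEq} directly: first reduce to the case $\gamma_i(1)=r$ and $\delta_j(0)=r$ for all $i,j$ via its homotopy-commutative triangles, then choose a non-decreasing $h\colon[0,1]\to[0,1]$ with $h(0)=0$, $h(1)=1$ and $h\equiv r$ on $[\min_i\gamma_i(0),\max_j\delta_j(1)]$; the induced self-deformation $h^K_s$ of $|K|$ collapses both $\alpha$ and $\beta$ to constant paths while fixing the relevant endpoints, so that $\omega\mapsto h^K_1\circ\omega$ provides a homotopy inverse.
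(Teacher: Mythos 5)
Your proof is correct and essentially the paper's own: both factor $\vP_{|K|}(\alpha,\beta)$, up to homotopy, into a one-sided concatenation with $\alpha$ handled directly by Proposition~\ref{p:CubPathHEq} and a one-sided concatenation with $\beta$ handled by passing to the opposite pre-cubical set $K^{op}$ via the coordinate flip $(t_1,\dots,t_k)\mapsto(1-t_1,\dots,1-t_k)$; the only difference is that you route through the intermediate space $\vP_{|K|}(\alpha(1),\beta(1))$ while the paper routes through $\vP_{|K|}(\alpha(0),\beta(0))$, which is immaterial. Note that, exactly as in the paper's argument, you implicitly use $\gamma_i(0)>0$ and $\delta_j(1)<1$ (needed to invoke \ref{p:CubPathHEq} in $K$ and in $K^{op}$, respectively); this is the intended reading of the hypothesis and holds in the application, where the paths lie in the open cells $U_c$.
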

\begin{proof}
	The map (*) is the composition
	\[
		\vP_{|K|}(\alpha(1),\beta(0)) \xrightarrow{\vP(\alpha,\beta(0))} \vP_{|K|}(\alpha(0),\beta(0)) \xrightarrow{\vP(\alpha(0),\beta)} \vP_{|K|}(\alpha(0),\beta(1)).
	\]
	Proposition \ref{p:CubPathHEq} applied to $\alpha$ and $z=\beta(0)$ implies that $\vP(\alpha,\beta(0))$ is a homotopy equivalence. Let $K^{op}$ be the pre-cubical complex opposite to $K$, i.e., $K^{op}[n]=K[n]$, $(d^\varepsilon_i)^{op}=d^{1-\varepsilon}_i$. There is an obvious d-homeomorphism
	\[
		|K|^{op}\ni (b;t_1,\dots,t_k) \mapsto (b;1-t_1,\dots,1-t_k)\in |K^{op}|.
	\]
	Now \ref{p:CubPathHEq} applied to $\beta^{op}$, where $\beta^{op}(t)=\beta(1-t)$, and $z=\alpha(0)$ on $K^{op}$ implies that $\vP(\alpha(0),\beta)$ is a homotopy equivalence.
\end{proof}

\begin{proof}[Proof of \ref{t:InteriorSystem}]
	(a):
	It is easy to check that the sets $U_c$ are pair-wise disjoint, future and past connected, future and past $\fF$--trivial and that they cover $|K|$. It remains to prove that every pair $(U_a,U_b)$ is future, past and totally $\fF$--stable for all $a\in K[n]$, $b\in K[m]$.

	Fix a path $\alpha\in \vP_{U_c}$; there is a presentation $\alpha=[a;\gamma]$ with $\gamma=(\gamma_i)_{i=1}^n\in \vP_{\vI^n}$. Clearly $0<\gamma_i(0)\leq \gamma_i(1)<1$  for all $i$, and then $r=\max_{i=1}^n \gamma_i(1)<1$. Let $y=[b;(r,\dots,r)]$; for every path $\beta\in\vP_{(U_{b})_{\geq y}}$, Proposition \ref{p:CubPaths} implies that $\vP_{|K|}(\alpha,\beta)$ is an $\fF$--equivalence. Thus, $y\in \St^+_\fF(\alpha;U_{b})$, which implies that the pair $(U_a,U_{b})$ is future $\fF$--stable. A similar argument shows that this pair is past stable.
	
	To prove total stability, note that $([a;(\tfrac{1}{2},\dots,\tfrac{1}{2})],[b;(\tfrac{1}{2},\dots,\tfrac{1}{2})])$ is a total stabilizer of $(U_a,U_b)$, again by Proposition \ref{p:CubPaths}.

	(b) and (c):
	Again, the only non-trivial part to check is the stability of all pairs $(V^+_v,V^+_w)$, $v,w\in K[0]$. Since $w$ is the final point of $V^+_w$, it remains to check that $\vP(\alpha,w)$ is an $\fF$--equivalence for $\alpha\in \vP_{V_v^+}$. Since $\alpha$ lies in a single cube, i.e., $\alpha=[c;\gamma]$ for $c\in K[n]$ and $\gamma\in \vP_{\vI^n}$, then this follows from Proposition \ref{p:CubPathHEq}.
\end{proof}

Combining Theorems \ref{t:UniqueSystem} and \ref{t:InteriorSystem} we obtain
\begin{cor}
	Every finite pre-cubical set $K$ with no loops admits, for every equivalence system $\fF$:
	\begin{enumerate}[\normalfont (a)]
	\item{A coarsest total stable $\fF$--component system $\cM^\pm_{\fF}(K)$.}
	\item{A coarsest future (resp. past) stable $\fF$--component system $\cM^+_{\fF}(K)$ (resp. $\cM^-_{\fF}(K)$), which is closed.}
	\end{enumerate}
\end{cor}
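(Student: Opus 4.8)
The plan is to read off the corollary from Theorems \ref{t:UniqueSystem} and \ref{t:InteriorSystem}; no further geometry of $|K|$ is required. First I would note that finiteness of $K$ makes the relevant index sets finite: the set $\bigcup_{n\geq 0}K[n]$ of all cubes and the set $K[0]$ of vertices are both finite. Hence the three families produced by Theorem \ref{t:InteriorSystem} are \emph{finite} component systems on $|K|$: by part (a), $\{U_c\}_c$ is a finite total $\fF$-component system, and by parts (b) and (c), $\{V^+_v\}_v$ and $\{V^-_v\}_v$ are a finite future and a finite past $\fF$-component system, respectively. So $|K|$ meets the hypothesis of Theorem \ref{t:UniqueSystem} in each of the three flavours.

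For part (a) of the corollary I would then invoke the total case of Theorem \ref{t:UniqueSystem}: since $|K|$ admits a finite total $\fF$-component system, it admits a coarsest one, which we take as the definition of $\cM^\pm_\fF(K)$. For part (b), the future (resp. past) case of Theorem \ref{t:UniqueSystem} yields a coarsest future (resp. past) $\fF$-component system $\cM^+_\fF(K)$ (resp. $\cM^-_\fF(K)$). To obtain closedness I would use the extra observation, recorded immediately after the definition of $V^\pm_v$, that $v=[v;()]$ is a final point of $V^+_v$ and an initial point of $V^-_v$; this is exactly the requirement in the definition of a closed system, so $\{V^+_v\}_v$ is a closed future $\fF$-component system and $\{V^-_v\}_v$ a closed past one. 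The second sentence of Theorem \ref{t:UniqueSystem} then guarantees that the coarsest future (resp. past) system $\cM^+_\fF(K)$ (resp. $\cM^-_\fF(K)$) is itself closed.

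I do not expect any genuine obstacle here: the corollary is bookkeeping on top of the two cited theorems, whose proofs carry all the content. The only points worth spelling out are that finiteness of $K$ transfers to finiteness of the three canonical systems, and that the vertex-indexed systems are closed because each $V^+_v$ (resp. $V^-_v$) has the distinguished vertex $v$ as a final (resp. initial) point.
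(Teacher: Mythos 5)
Your proposal is correct and matches the paper's own argument: the corollary is stated there precisely as a combination of Theorems \ref{t:UniqueSystem} and \ref{t:InteriorSystem}, with finiteness of $K$ supplying finite systems and the distinguished vertices making $\{V^+_v\}_v$ and $\{V^-_v\}_v$ closed, so the coarsest future and past systems are closed. Nothing is missing.
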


The component systems given in Theorem \ref{t:InteriorSystem} may or may not be the coarsest; some examples are presented in the last section.

\section{Stable component category}
\label{s:ComponentCategory}

In this section we construct the component category associated to a stable (future/past/total) $\fF$--component system. The objects of the component category are the components of the component system, and the morphisms carry information about the stable path space between particular components. Component categories are enriched in some category $\cS$, i.e., their "morphism sets" are objects of $\cS$. In most typical cases, $\cS$ will be the homotopy category of topological spaces, the category of graded $R$--modules for a principal ideal domain $R$  or the category of sets (in the last case, we obtain categories in the usual sense).

We refer the reader to the book by Kelly \cite{Kelly} for the basic definitions concerning enriched categories.

Let us fix an equivalence system $\fF$, a monoidal category $\cS$ and a monoidal functor $L:\Sp\to \cS$ that sends $\fF$--equivalences to isomorphisms in $\cS$. Equivalently, we require that there is a factorization
\begin{equation}
	\begin{diagram}
		\node{}
		\node{\Sp[\fF^{-1}]}
			\arrow{se,t}{\tilde{L}}
		\node{}
	\\
		\node{\Sp}
			\arrow[2]{e,t}{L}
			\arrow{ne}
		\node{}
		\node{\cS}
	\end{diagram}
\end{equation}

Fix a d-space $X$ with no loops and a stable future $\fF$--component system $\cA:=\{A_i\}_{i\in I}$ on $X$. As noted before, the indexing set $I$ is partially ordered by $i\leq j$ if and only if $\vP_X(A_i,A_j)\neq \emptyset$.

In the case when $\fF=\fF_\infty$ is the family of weak homotopy equivalences and all components $A_i$ have the maximal elements $a_i\in A_i$ we can define the future component category as the $\Sp$--category with $I$ as the set of objects and $\vT_X(a_i,a_j)$ as the morphism object from $i$ to $j$. This is a full subcategory of the trace category of $\vT(X)$ defined by Raussen \cite[Section 3]{RInv}. However, we do not want to make such an assumption. In most cases, one can restrict to closed future component systems but we want to obtain a definition of a component category that works also for total component systems. Apart from the most trivial cases, d-spaces do not admit total component systems having components with final points.

\begin{df}\label{d:FutureSystemOfRepresentatives}
	\emph{A future system of representatives} of $\cA$ is a collection of points $\{a_i\in A_i\}_{i\in I}$ such that $a_j\in \St^+_\fF(a_i,A_j)$ for every $i<j\in I$.
\end{df}

\begin{rem}
    Obviously, $a_j\in \St^+_\fF(a_i,A_j)$ if $i\not\leq j$, since $\vP(A_i,A_j)=\emptyset$ in this case. However, we do not assume that $a_i\in \St^+(a_i,A_i)$.
\end{rem}

Note that if $\{a_i\}_{i\in I}$ is a future system of representatives of $\cA$, then $\vSP_\fF^+(A_i,A_j)$ and $\vP(a_i,a_j)$ are $\fF$--equivalent for all $i,j\in I$. Indeed, for $i< j$ this follows immediately from the definition, for $i=j$ both $\vSP_\fF^+(A_i,A_i)$ and $\vP(a_i,a_i)\cong\{*\}$ are $\fF$--contractible, and for $i\not\leq j$ both $\vSP_\fF^+(A_i,A_i)$ and $\vP(a_i,a_i)$ are empty.

\begin{prp}\label{p:ExistenceOfRepresentants}
	Assume that the component system $\cA$ is finite. For any collection of points $\{b_i\in A_i\}_{i\in I}$, there exists a future system of representatives $\{a_i\}_{i\in I}$ of $\cA$ such that $a_i\geq b_i$ for all $i\in I$.
\end{prp}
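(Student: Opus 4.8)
The plan is to build the points $a_i$ by descending induction on the partial order on the finite poset $I$, starting from the maximal elements. Process the indices of $I$ in an order that refines the reverse partial order: whenever $i < j$, the point $a_j$ is chosen before $a_i$. Concretely, enumerate $I = \{i_1, i_2, \dots, i_N\}$ so that $i_p < i_q$ implies $p > q$ (a linear extension of the opposite order), and construct $a_{i_1}, a_{i_2}, \dots$ in this sequence. At stage $q$, when we pick $a_{i_q}$, all points $a_j$ with $i_q < j$ have already been selected, and the constraint to satisfy is that $a_{i_q} \in \St^+_\fF(a_{i_q}, A_j)$, i.e.\ that $a_{i_q}$ future $\fF$--stabilizes the \emph{constant} path at $a_{i_q}$ in $A_j$, for each such $j$. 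Note carefully that this constraint involves $a_{i_q}$ on both sides, so we cannot just quote the stabilizers of a fixed earlier path; this circularity is the main subtlety to handle.

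To resolve it, I would first use future connectedness of $A_{i_q}$ together with finiteness to find a single point $c \in A_{i_q}$ dominating $b_{i_q}$ and dominating enough auxiliary points, and then invoke Proposition~\ref{p:PropertiesOfStabilizers} to replace the ``constant path at $a_{i_q}$'' condition by a condition that does not depend on where $a_{i_q}$ sits, as long as $a_{i_q} \geq c$. More precisely: for each $j$ with $i_q < j$, the pair $(A_{i_q}, A_j)$ is future $\fF$--stable, so by Proposition~\ref{p:PropertiesOfStabilizers}.(\ref{i:PCofinal}) applied with any starting path, and by Proposition~\ref{p:PropertiesOfStabilizers}.(b), whether a point $a \in A_{i_q}$ lies in $\St^+_\fF(\sigma_a, A_j)$ can be detected inside $A_{i_q}{}_{\geq a}$ and $A_j{}_{\geq z}$ for a suitable $z$; and by Proposition~\ref{p:PropertiesOfStabilizers}.(a) and (c), once some point of $A_{i_q}$ future stabilizes some path, all larger points do too. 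The key point is that $\St^+_\fF(\sigma_a, A_j)$ for varying $a \in A_{i_q}$ is, up to cofinality, controlled by $\St^+_\fF(\sigma_c, A_j)$ for a fixed large $c$: if $a \geq c$ and $\alpha \in \vP_{A_{i_q}}(c,a)$, then Proposition~\ref{p:PropertiesOfStabilizers}.(c) gives $\St^+_\fF(\alpha, A_j) \subseteq \St^+_\fF(\sigma_a, A_j)$, so any common stabilizer of the finitely many paths $\alpha$ also stabilizes the relevant constant paths.

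So the inductive step runs as follows. Suppose we have already chosen $a_j$ for all $j$ with $i_q < j$. Using finiteness of $\{j : i_q < j\}$ and future connectedness of $A_{i_q}$, pick $c \in A_{i_q}$ with $c \geq b_{i_q}$. For each $j$ with $i_q < j$, choose by Proposition~\ref{p:PropertiesOfStabilizers}.(\ref{i:PCofinal}) a point $z_j \in \St^+_\fF(\sigma_c; A_j)$ with $z_j \geq a_j$; since there are finitely many such $j$, Proposition~\ref{p:PropertiesOfStabilizers}.(e) lets us arrange these consistently. Now set $a_{i_q}$ to be any point of $A_{i_q}$ with $a_{i_q} \geq c$ — for instance $a_{i_q} := c$ works, after re-choosing $c$ large enough — and verify $a_{i_q} \in \St^+_\fF(\sigma_{a_{i_q}}; A_j)$ for each relevant $j$ using Proposition~\ref{p:PropertiesOfStabilizers}.(b),(c) and the fact that $a_{i_q} \geq c$ implies $\St^+_\fF(\sigma_c; A_j) \cap (A_j)_{\geq a_{i_q}} \ne \emptyset$ again by Proposition~\ref{p:PropertiesOfStabilizers}.(\ref{i:PCofinal}). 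For $j$ with $i_q \not\leq j$ there is nothing to check since $\vP_X(A_{i_q}, A_j) = \emptyset$. The only place care is genuinely needed — and the step I expect to be the main obstacle — is disentangling the ``$a$ on both sides'' nature of the condition $a \in \St^+_\fF(\sigma_a, A_j)$, i.e.\ showing that choosing $a_{i_q}$ large enough (above $c$) forces it, uniformly over the finitely many $j > i_q$, without the earlier-chosen $a_j$ needing to be revised; finiteness of $I$ is what makes this uniformity possible.
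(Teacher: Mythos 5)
There is a genuine gap, and it starts with a misreading of Definition~\ref{d:FutureSystemOfRepresentatives}. The condition for $i<j$ is $a_j\in \St^+_\fF(\sigma_{a_i};A_j)$: it constrains the point $a_j\in A_j$ (membership in a subset of $A_j$ that depends on $a_i$), not the point $a_{i_q}$ as you write (your condition ``$a_{i_q}\in\St^+_\fF(\sigma_{a_{i_q}},A_j)$'' does not even typecheck, since $\St^+_\fF(-;A_j)\subseteq A_j$ and the components are disjoint). Because of this, your processing order is backwards. In your descending induction, when you reach $i_q$ the points $a_j$ for $j>i_q$ are already frozen, and what you must arrange is that each \emph{fixed} $a_j$ lies in $\St^+_\fF(\sigma_{a_{i_q}};A_j)$ for the point $a_{i_q}$ you are about to choose. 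Proposition~\ref{p:PropertiesOfStabilizers} gives no leverage here: it says stabilizers are upward-closed and cofinal \emph{in $A_j$}, but gives no monotonicity in the source point, and the containment you invoke, $\St^+(\alpha;A_j)\subseteq\St^+(\sigma_{\alpha(1)};A_j)$ from part (c), only helps if the fixed $a_j$ is already known to stabilize the path $\alpha$ from $c$ to $a_{i_q}$ --- which it cannot be, since $a_j$ was chosen before $c$, $a_{i_q}$ and $\alpha$ existed. In fact the dependence typically goes the wrong way: as the source point grows, the stabilizer in $A_j$ shrinks (this is exactly the phenomenon in Dubut's example from the introduction, where the threshold $z(D,x,y)$ moves up as $x,y$ do), and since $A_j$ need not have a final point there is no ``safe'' advance choice of $a_j$. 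So the step you yourself flag as the main obstacle is not a technicality; it is where the argument breaks, and choosing $a_{i_q}$ ``large enough above $c$'' does not force it.

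The fix is simply to run the induction in the opposite (increasing) direction, which is what the paper does: for $j$ minimal take any $a_j\in(A_j)_{\geq b_j}$; for $j$ not minimal, with all $a_i$, $i<j$, already chosen, take $a_j$ to be any point of $\bigcap_{i<j}\St^+_\fF(a_i;A_j)_{\geq b_j}$. This set is non-empty by Proposition~\ref{p:PropertiesOfStabilizers} (parts (\ref{i:PCofinal}) and (e), using finiteness of $I$ and future connectedness of $A_j$), because now every constraint is a condition on the single point $a_j$ relative to already-fixed constant paths $\sigma_{a_i}$, and all such constraints are satisfied simultaneously by any sufficiently large point of $A_j$ above $b_j$. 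With the increasing order there is no circularity at all, so none of the machinery you set up to ``disentangle'' it is needed.
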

\begin{proof}
	The points $a_i$ can be chosen inductively. If $j\in I$ is minimal, let $a_j\in (A_j)_{\geq b_j}$ be an arbitrary point. If $j$ is not minimal and all points $a_i\in A_i$ for $i<j$ are chosen according to Definition \ref{d:FutureSystemOfRepresentatives}, then let $a_j$ be an arbitrary point of the set
 \[
    \bigcap_{i\in I_{<j}} \St^+(a_i;A_j)_{\geq b_j},
 \]
    which is non-empty by Proposition \ref{p:PropertiesOfStabilizers}.
\end{proof}

\begin{rem}
    There exist infinite future component systems that does not admit any system of representatives. For example, consider the d-space
    \[
    	X=\{(x,y)\in \vR^2\;|\; x+y\geq 0\}
    \]
    with the component system $\{A_{i}\}_{i\in \Z}$, $A_i=\{(x,y)\in X\;|\; i\leq x < i+1\}$. For any choice of points $a_i=(a_i^x,a_i^y)\in A_i$ we have $\lim_{i\to -\infty}a_i^y=+\infty$ and, therefore, the space $\vP(a_i,a_j)$ is empty for every $j\in \Z$ if $i<j$ is small enough. On the other hand, $\vSP^+(A_j,A_i)\simeq \{*\}$ is non-empty.
\end{rem}

\begin{df}
	Let $\ba=\{a_i\}_{i\in I}$ be a future system of representatives of $\cA$. \emph{The future component category} of $\cA$ with representatives $\ba$ is the $\cS$--category $\cT^{\ba}=\cT_{\cA,L}^{\ba}$ with
	\begin{itemize}
	\item{$I$ as the set of objects,
	}
	\item{$\cT^{\ba}(i,j)=L(\vP(a_i,a_j))$ as the morphism objects,
	}
	\item{$L(\{*\}\to \{\sigma_{a_i}\}=\vP(a_i,a_i))$ as the identities,
	}
	\item{For $i<j<k\in I$, the composition is given by
\[
	L(\vP(a_i,a_j))\otimes L(\vP(a_j,a_k)) \to L(\vP(a_i,a_j)\times \vP(a_j,a_k)) \xrightarrow{L(c_{ijk})} L(\vP(a_i,a_k)),
\]	
	where $c_{ijk}$ stands for the concatenation map. The left-hand morphism is the structure morphism of the monoidal functor $L$.
	}
	\end{itemize}
\end{df}

This definition make sense; the only non-trivial thing to check is that the composition is associative and this follows immediately from properties of monoidal functors  and the fact that the concatenation of d-paths is associative up to homotopy.

Equivalently, we can define $\cT^\ba_\cA$ as $L_*(\vT(X)|_{\ba})$, where $\vT(X)|_{\ba}$ is the full subcategory of the trace category with the objects $\{a_i\}_{i\in I}$, and $L_*:\Sp\mhyphen\Cat\to \cS\mhyphen\Cat$ is the functor changing the enriching category induced by $L$.

Our next goal is to prove that the $\cS$--category $\cT^\ba$ does not depend on the choice of the system of representatives $\ba$ of $\cA$.

\begin{prp}\label{p:NaturalEquiv}
	Let $\ba=\{a_i\}_{i\in I}$ and $\bb=\{b_i\}_{i \in I}$ be collections of points in $X$ and let $\{\omega_i\in \vP(a_i,b_i)\}_{i\in I}$ be a collection of d-paths. Assume that both maps
	\begin{align*}
		\vP(\omega_i,b_j)&:  \vP(b_i,b_j)\to \vP(a_i,b_j)\\
		\vP(a_i,\omega_j)&:  \vP(a_i,a_j)\to \vP(a_i,b_j)
	\end{align*}	
	are $\fF$--equivalences for every $i,j\in I$. Then the formula
	\begin{equation}\label{e:Fij}
		F_{i,j}: L(\vP(a_i,a_j))\xrightarrow{L(\vP(a_i,\omega_j))} L(\vP(a_i,b_j))\xrightarrow{L(\vP(\omega_i,b_j))^{-1}} L(\vP(b_i,b_j)),
	\end{equation}
	defines an equivalence of the $\cS$--categories $L_*(\vT(X)|_{\ba})$ and $L_*(\vT(X)|_{\bb})$.
\end{prp}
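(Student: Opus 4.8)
The plan is to reduce everything to the trace category and then to show that $F=\{F_{i,j}\}$ is in fact an \emph{isomorphism} (hence a fortiori an equivalence) of $\cS$--categories. Since each quotient map $\vP_X(x,y)\to\vT_X(x,y)$ is a weak homotopy equivalence, hence an $\fF$--equivalence, it induces an isomorphism $L(\vP(x,y))\cong L(\vT(x,y))$ in $\cS$; under these identifications $F_{i,j}$ of (\ref{e:Fij}) becomes the composite
\[
	L(\vT(a_i,a_j))\xrightarrow{\,L(p_{ij})\,}L(\vT(a_i,b_j))\xrightarrow{\,L(q_{ij})^{-1}\,}L(\vT(b_i,b_j)),
\]
where $p_{ij}([\alpha])=[\alpha]\cdot[\omega_j]$ and $q_{ij}([\gamma])=[\omega_i]\cdot[\gamma]$ and $\cdot$ is trace concatenation. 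The point of passing to traces is that $\cdot$ is \emph{strictly} associative with the constant traces as strict units, so $\cT^{\ba}:=L_*(\vT(X)|_{\ba})$ and $\cT^{\bb}:=L_*(\vT(X)|_{\bb})$ are honest $\cS$--categories with object set $I$. By hypothesis $p_{ij}$ and $q_{ij}$ are $\fF$--equivalences, so $L(p_{ij})$ and $L(q_{ij})$ are isomorphisms and every $F_{i,j}$ is an isomorphism in $\cS$. Since $F$ is moreover the identity on objects, once we know $F$ is an $\cS$--functor it follows automatically that $F$ is an isomorphism of $\cS$--categories (its object-wise inverse is then an $\cS$--functor too), in particular an equivalence. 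So the entire task is to check that $F$ preserves identities and composition.

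For identities: the unit $[\sigma_{a_i}]$ of $\cT^{\ba}$ at $i$ goes to $L(q_{ii})^{-1}L(p_{ii})[\sigma_{a_i}]=L(q_{ii})^{-1}[\omega_i]=[\sigma_{b_i}]$, using $[\sigma_{a_i}]\cdot[\omega_i]=[\omega_i]=[\omega_i]\cdot[\sigma_{b_i}]$. For composition, the only extra input beyond strict associativity is the intertwining relation $L(q_{ij})\circ F_{i,j}=L(p_{ij})$, i.e. ``$[\omega_i]\cdot F_{i,j}(\xi)=\xi\cdot[\omega_j]$'' for $\xi\in L(\vT(a_i,a_j))$. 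Fixing $i\le j\le k$ and composing the desired equality $F_{i,k}\circ c^{\ba}_{ijk}=c^{\bb}_{ijk}\circ(F_{i,j}\otimes F_{j,k})$ on the left with the isomorphism $L(q_{ik})$, it reduces to the chain
\[
	[\omega_i]\cdot\bigl(F_{i,j}(\xi)\cdot F_{j,k}(\eta)\bigr)=\bigl(\xi\cdot[\omega_j]\bigr)\cdot F_{j,k}(\eta)=\xi\cdot\bigl(\eta\cdot[\omega_k]\bigr)=(\xi\cdot\eta)\cdot[\omega_k],
\]
which equals $L(p_{ik})\bigl(c^{\ba}_{ijk}(\xi\otimes\eta)\bigr)$; the re-bracketings are strict associativity and the two genuine steps are the intertwining relation at $(i,j)$ and at $(j,k)$.

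The step I expect to demand the most care is turning the element-level computation above into an honest diagram in $\cS$. The subtlety is that $F$ is \emph{not} induced by any continuous or d-map: because $X$ has no loops there is no d-path from $b_i$ back to $a_i$, so one cannot literally conjugate by $\omega_i$, and every manipulation has to be carried out in $\cS$, tracking the invertible morphisms $L(q_{ij})^{\pm1}$ together with the (lax) monoidal structure maps $L(A)\otimes L(B)\to L(A\times B)$ of $L$ and using their naturality to move those isomorphisms into position. Working with trace spaces from the outset is precisely what makes the associativity and unitality invoked above strictly true, so that no separate lemma (that $L$ identifies homotopic maps) is required; the rest is routine bookkeeping with the monoidal constraints.
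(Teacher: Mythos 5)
Your proof is correct, and it differs from the paper's in a worthwhile way. The paper stays at the level of path spaces: it writes down a large diagram of spaces $\vP(a_i,b_j)\times\vP(b_j,b_k)$, etc., whose triangles and squares commute only \emph{up to homotopy} (because concatenation of paths is associative and unital only up to reparametrization), and then uses the fact that $L$ factors through $\Sp[\fF^{-1}]$, so that homotopic maps are identified after applying $L$; the functoriality of the inverse $G$ is dealt with by "a similar argument". You instead transport everything to the trace category, where concatenation is strictly associative and strictly unital, so all the relevant space-level squares (e.g.\ $q_{ik}\circ\mathrm{concat}=\mathrm{concat}\circ(q_{ij}\times\id)$ and $\mathrm{concat}\circ(p_{ij}\times\id)=\mathrm{concat}\circ(\id\times q_{jk})$) commute on the nose; the only categorical input left is naturality of the structure maps $L(A)\otimes L(B)\to L(A\times B)$, and your intertwining chain is exactly the paper's diagram, just executed strictly. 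This buys you two things: you never need the (unstated in the paper, though true) lemma that $L$ coequalizes homotopic maps, and your remark that an identity-on-objects $\cS$--functor with invertible hom-components is automatically an isomorphism of $\cS$--categories disposes of $G$ without a separate argument. Two small glosses to make explicit if you write this up: the hypotheses are about the path-level maps $\vP(\omega_i,b_j)$, $\vP(a_i,\omega_j)$, so you should note that the quotient maps $\vP\to\vT$ strictly intertwine these with your $q_{ij}$, $p_{ij}$ and invoke 2-out-of-3 to see the trace-level maps are in $\fF$; and the "routine bookkeeping" converting the element-level chain into diagrams (naturality of the monoidal constraint in each variable) should be at least indicated, though it is genuinely routine and no less detailed than the paper's "elementary to check" step.
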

\begin{proof}
	Clearly, for every $i,j\in I$, the morphism $F_{i,j}$ is an isomorphism in $\cS$ and hence we can define $G_{i,j}=F^{-1}_{i,j}$. 
	It remains to prove that the collections of morphisms $\{F_{i,j}\}$ and $\{G_{i,j}\}$ define functors $F$ and $G$, respectively; if so, $F$ and $G$ are mutual inverses. This will be shown only for $F$; the argument for $G$ is similar. It is clear that identities are preserved by $F$. To show that $F$ preserves compositions of morphisms, we need to check that the diagram below is commutative, for all $i\leq j\leq k \in I$:
\begin{equation}\label{e:FunctorComposition}
	\begin{diagram}
		\node{L(\vP(a_i,a_j))\otimes L(\vP(a_j,a_k))}
			\arrow{e}
			\arrow{s,r}{\simeq}
		\node{L(\vP(a_i,a_j)\times\vP(a_j,a_k))}
			\arrow{e}
			\arrow{s,r}{\simeq}
		\node{L(\vP(a_i,a_k))}
			\arrow{s,r}{\simeq}
	\\
		\node{L(\vP(a_i,b_j))\otimes L(\vP(a_j,b_k))}
			\arrow{e}
			\arrow{s,r}{\simeq}
		\node{L(\vP(a_i,b_j)\times\vP(a_j,b_k))}
			\arrow{s,r}{\simeq}
		\node{L(\vP(a_i,b_k))}
			\arrow{s,r}{\simeq}
	\\
		\node{L(\vP(b_i,b_j))\otimes L(\vP(b_j,b_k))}
			\arrow{e}
		\node{L(\vP(b_i,b_j)\times\vP(b_j,b_k))}
			\arrow{e}
		\node{L(\vP(b_i,b_k))}
	\end{diagram}
\end{equation}	
All vertical morphisms are induced by concatenation with one of the paths $\omega_i, \omega_j,\omega_k$, or are inverses of such morphisms. They are all isomorphisms, since the products of $\fF$--equivalences are $\fF$--equivalences and $L$ sends $\fF$--equivalences into isomorphisms. The left-hand part of this diagram commutes since $L$ is a monoidal functor.	To prove that the right-hand part commutes, consider the diagram of spaces:
	\begin{equation}	
	\begin{diagram}
		\node{\vP(a_i,a_j)\times \vP(a_j,a_k)}
			\arrow[2]{e,t}{f}
			\arrow{se,tb}{k}{\simeq}
			\arrow[2]{s,lr}{a}{\simeq}
		\node{}
		\node{\vP(a_i,a_k)}
			\arrow[2]{s,lr}{\simeq}{c}
	\\
		\node{}
		\node{\vP(a_i,a_j)\times\vP(a_j,b_k)}
			\arrow{sw,tb}{l}{\simeq}
			\arrow{se,t}{h}
		\node{}
	\\
		\node{\vP(a_i,b_j)\times \vP(a_j,b_k)}
		\node{\vP(a_i,a_j)\times \vP(b_j,b_k)}
			\arrow{w,tb}{r}{\simeq}
			\arrow{n,lr}{q}{\simeq}
			\arrow{s,lr}{s}{\simeq}
		\node{\vP(a_i,b_k)}
	\\
		\node{}
		\node{\vP(a_i,b_j)\times\vP(b_j,b_k)}
			\arrow{nw,tb}{m}{\simeq}
			\arrow{ne,t}{j}
		\node{}
	\\
		\node{\vP(b_i,b_j)\times\vP(b_j,b_k)}	
			\arrow[2]{e,t}{g}
			\arrow[2]{n,lr}{b}{\simeq}
			\arrow{ne,tb}{n}{\simeq}
		\node{}
		\node{\vP(b_i,b_k)}
			\arrow[2]{n,lr}{\simeq}{d}
	\end{diagram}
	\end{equation}
	in which, again, every map is given either by concatenation or by pre- or post-composition with one of the paths $\omega_i$, $\omega_j$, $\omega_k$. All maps marked $\simeq$ are $\fF$--equivalences for the reasons mentioned before. It is elementary to check that all triangles and squares in this diagram commute up to homotopy and then, after composing with $L$, they commute strictly. Thus,
	\begin{multline*}
		L(d)^{-1}L(c)L(f)=
		L(d)^{-1}L(h)L(k)=
		L(d)^{-1}L(h)L(l)^{-1}L(a)=\\
		L(d)^{-1}L(h)L(q)L(r)^{-1}L(a)=
		L(d)^{-1}L(j)L(s)L(r)^{-1}L(a)=
		L(d)^{-1}L(j)L(m)^{-1}L(a)=\\
		L(d)^{-1}L(j)L(n)L(b)^{-1}L(a)=
		L(d)^{-1}L(d)L(g)L(b)^{-1}L(a)=
		L(g)L(b)^{-1}L(a),
	\end{multline*}
	which shows that also the right-hand part of (\ref{e:FunctorComposition}) commutes.
\end{proof}

\begin{prp}\label{p:TraceCatIndependent}
	The category $\cT^\ba$ does not depend, up to isomorphism,  on the choice of a future representative system $\ba$ of $\cA$.
\end{prp}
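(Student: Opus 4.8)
The plan is to deduce Proposition \ref{p:TraceCatIndependent} from Proposition \ref{p:NaturalEquiv} by showing that any two future systems of representatives can be compared, possibly after passing through a third one that dominates both. So let $\ba=\{a_i\}$ and $\bb=\{b_i\}$ be two future systems of representatives of $\cA$. First I would reduce to the case where the two systems are pointwise comparable, say $b_i\geq a_i$ for all $i$, connected by d-paths $\omega_i\in\vP(a_i,b_i)$. To achieve this, use Proposition \ref{p:ExistenceOfRepresentants}: applied to the collection $\{b_i\}$ it produces a future system of representatives $\bc=\{c_i\}$ with $c_i\geq b_i$, and applied again (or simultaneously) we may also arrange $c_i\geq a_i$. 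It then suffices to prove $\cT^{\ba}\cong\cT^{\bc}$ and $\cT^{\bb}\cong\cT^{\bc}$ separately, i.e.\ to treat the comparable case with the roles of the "smaller" and "larger" system played by $\ba,\bc$ (and then $\bb,\bc$).

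So assume $b_i\geq a_i$, pick $\omega_i\in\vP(a_i,b_i)$, and I must verify the two hypotheses of Proposition \ref{p:NaturalEquiv}: that $\vP(\omega_i,b_j):\vP(b_i,b_j)\to\vP(a_i,b_j)$ and $\vP(a_i,\omega_j):\vP(a_i,a_j)\to\vP(a_i,b_j)$ are $\fF$--equivalences for all $i,j$. The second one, $\vP(a_i,\omega_j)=\omega\mapsto\omega*\omega_j$, is an $\fF$--equivalence precisely because $\omega_j\in\vP_{A_j}$ (as $A_j$ is d-convex, the path from $a_j$ to $b_j\in A_j$ stays in $A_j$) and $b_j\in\St^+_\fF(a_i;A_j)$, which holds since $\{b_i\}$ is a system of representatives and $b_j\in\St^+_\fF(a_i,A_j)$ whenever $i\leq j$ (and trivially the path spaces are empty when $i\not\leq j$). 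The first hypothesis, $\vP(\omega_i,b_j)=\omega\mapsto\omega_i*\omega$, requires knowing that $\omega_i$ is a \emph{past} equivalence with respect to $b_j$; here I would use that $\omega_i\in\vP_{A_i}$ and that $b_j$ is large enough to stabilize things in $A_i$ as well — more precisely, one should observe that for any $x,y\in A_i$ with $x\leq y$ and any $z$ with $i\leq j\leq z$ suitably chosen, concatenation by a path inside $A_i$ is an $\fF$--equivalence; this follows by combining $a_i,b_i\in A_i$, the cofinality properties from Proposition \ref{p:PropertiesOfStabilizers}, and the diagram-chase technique (2-out-of-3) already used in the proof of Proposition \ref{p:PropertiesOfStabilizers}(c). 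Concretely: both $\vP(a_i,\omega_j)$ and $\vP(\omega_i,b_j)$ together with $\vP(a_i,a_j)\to\vP(b_i,a_j)$ (when it makes sense) fit into a homotopy-commutative square whose remaining edge $\vP(\omega_i,a_j)$ is an $\fF$--equivalence by the d-convexity and future-triviality of $A_i$, so 2-out-of-3 gives what we want.

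Once both hypotheses are checked, Proposition \ref{p:NaturalEquiv} directly furnishes an equivalence (in fact an isomorphism, since the comparison morphisms $F_{i,j}$ are isomorphisms in $\cS$) of $\cS$--categories $L_*(\vT(X)|_{\ba})\simeq L_*(\vT(X)|_{\bb_{\text{via }\bc}})$, and composing the two comparisons $\cT^{\ba}\cong\cT^{\bc}\cong\cT^{\bb}$ yields the claim. I expect the main obstacle to be the verification of the first hypothesis of Proposition \ref{p:NaturalEquiv} — namely that post-composition with $\omega_i$ (which is only a path \emph{inside} the component $A_i$, not a path landing in a "large" component) is an $\fF$--equivalence onto $\vP(a_i,b_j)$. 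The key realization that resolves it is that d-convexity of $A_i$ together with Proposition \ref{p:PropertiesOfStabilizers}(c) (stabilizers of a path are contained in stabilizers of its endpoints) and the future-triviality of $A_i$ let us compare path spaces with source $a_i$ and source $b_i$ by a homotopy-commutative square all of whose other edges are known $\fF$--equivalences, so 2-out-of-3 closes the argument; this is a routine but slightly fiddly diagram chase of exactly the kind already performed earlier in the paper.
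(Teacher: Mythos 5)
Your overall strategy (compare both systems to a third one and invoke Proposition \ref{p:NaturalEquiv} twice) is the same as the paper's, but there is a genuine gap in how you build the third system and, consequently, in how you verify the first hypothesis of Proposition \ref{p:NaturalEquiv}. You fix the dominating system $\bc$ first (via Proposition \ref{p:ExistenceOfRepresentants}, using only the condition $c_i\geq a_i,b_i$) and only afterwards choose the comparison paths $\omega_i\in\vP(a_i,c_i)$. Then you need $\vP(\omega_i,c_j):\vP(c_i,c_j)\to\vP(a_i,c_j)$ to be an $\fF$--equivalence, i.e.\ you need $c_j\in\St^+_\fF(\omega_i;A_j)$. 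Being a future system of representatives only gives that $c_j$ stabilizes the \emph{constant} paths $\sigma_{c_i}$, and stabilizing the endpoints of $\omega_i$ does not imply stabilizing $\omega_i$ itself. Your proposed repair --- a homotopy-commutative square whose ``remaining edge $\vP(\omega_i,a_j)$ is an $\fF$--equivalence by the d-convexity and future-triviality of $A_i$'' --- is exactly where the argument breaks: d-convexity and future-triviality of $A_i$ control path spaces \emph{inside} $A_i$, and say nothing about precomposition with $\omega_i$ on path spaces into another component; Dubut's example (paths inside the square $A$ and targets in $D$ that are not large enough) is precisely a situation where such a map fails to be an equivalence, and asserting it for $a_j$ is essentially the statement you are trying to prove for $c_j$, only with a smaller (hence worse) target point. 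The paper avoids this by an interleaved inductive construction: the point $b_j$ of the third system is chosen \emph{after} the paths $\omega_i$, $i<j$, inside $\St^+_\fF(a_j,a_j';A_j)\cap\bigcap_{i<j}\St^+_\fF(\omega_i;A_j)$ (nonempty by Proposition \ref{p:PropertiesOfStabilizers}), so that it stabilizes the comparison paths themselves. Choosing the representative system before the paths loses exactly this control, so your reduction does not go through as stated.

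Two further points. First, Proposition \ref{p:NaturalEquiv} is also needed at $i=j$: the maps $\{\sigma_{a_i}\}=\vP(a_i,a_i)\to\vP(a_i,c_i)$ and $\{\sigma_{c_i}\}\to\vP(a_i,c_i)$ are $\fF$--equivalences only if $\vP(a_i,c_i)$ is $\fF$--contractible, which requires $c_i\in\St^+_\fF(a_i;A_i)$ (recall the paper explicitly does not assume $a_i\in\St^+(a_i;A_i)$); Proposition \ref{p:ExistenceOfRepresentants} does not give this, whereas the paper's choice $b_j\in\St^+_\fF(a_j,a_j';A_j)$ does. This is fixable by taking the base points in $\St^+_\fF(a_i,b_i;A_i)$, but you must say so. Second, your justification of the second hypothesis cites the wrong stabilizer: what makes $\vP(a_i,\omega_j)$ an equivalence is $a_j\in\St^+_\fF(a_i;A_j)$ (from $\ba$ being a representative system, with $\omega_j$ a path in $(A_j)_{\geq a_j}$ by d-convexity), not $b_j\in\St^+_\fF(a_i;A_j)$; this slip is harmless, but the main gap above is not.
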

\begin{proof}
	Given two future representative systems $\ba=\{a_i\}$ and $\ba'=\{a'_i\}$ of $\cA$, one can find a representative system $\bb=\{b_i\}$ and paths $\omega_i\in\vP(a_i,b_i)$ and $\omega'_i\in \vP(a'_i,b_i)$ such that Proposition \ref{p:NaturalEquiv} applies both for $\ba$, $\bb$ and $\{\omega_i\}$ and for $\ba'$, $\bb$ and $\{\omega'_i\}$. This can be done inductively in a similar way as in the proof of \ref{p:ExistenceOfRepresentants}: if $j\in I$ is minimal, we choose any $b_j\in \St^+_\fF(a_j,a_j';A_j)$ and any $\omega_j\in \vP(a_j,b_j)$, $\omega'_j\in\vP(a_j',b_j)$. If $j$ is not minimal and $b_i$, $\omega_i$, $\omega'_i$ are chosen for all $i<j$ according to Proposition \ref{p:NaturalEquiv}, let $b_j$ be an element of
	\begin{equation}
		\St^+_\fF(a_j,a'_j;A_j)\cap \bigcap_{i<j}\St^+_\fF(\omega_i;A_j),
	\end{equation}
	which is non-empty by \ref{p:PropertiesOfStabilizers}. Now, Proposition \ref{p:NaturalEquiv} provides natural equivalences of $\cS$--categories
	\[
		\cT^\ba \xrightarrow\simeq \cT^\bb \xleftarrow\simeq \cT^{\ba'}.\qedhere
	\]
\end{proof}

Propositions \ref{p:ExistenceOfRepresentants} and \ref{p:TraceCatIndependent} allow to formulate the following.

\begin{df}\label{d:ComponentCategory}
    \emph{The future $\fF$--component category} of a finite stable future $\fF$--component system is the $\cS$--category
    \begin{equation*}
	   \vSP^+_{\fF,L}(X,\cA):= \cT^\ba
    \end{equation*}
    for any future system of representatives $\ba$ of $\cA$. Similarly, for a past $\fF$--component system $\cB$ we define its \emph{past $\fF$--component category} as $\vSP^-_{\fF,L}(X,\cB)=\cT^{\bb}$ for a past system of representatives $\bb$.
\end{df}

Given a finite total $\fF$--component system $\cA=\{A_i\}_{i\in I}$, one can define the future (resp. past) component category $\vSP^+_{\fF,L}(X,\cA)$ (resp. $\vSP^-_{\fF,L}(X,\cA)$). A natural requirement is that these $\cS$--categories should be equivalent.

By Proposition \ref{p:CommonStabilizingPairs}, there exist $c_i,d_i\in A_i$, $i\in I$ such that $d_j\in \St^+_\fF(c_i;A_j)$ and $c_i\in \St^-_\fF(A_i;d_j)$ for all $i,j\in I$. Fix two representative systems of $\cA$: the future one $\bb=\{b_i\}_{i\in I}$ and the past one $\ba=\{a_i\}_{i\in I}$ such that $a_i\leq c_i$ and $d_i\leq b_i$ for all $i\in I$. The existence of such systems is guaranteed by Proposition \ref{p:ExistenceOfRepresentants}. Choose d-paths $\omega_i\in\vP(a_i,b_i)$.

\begin{prp}\label{p:FPEq}
	For all $i,j\in I$, both maps $\vP(\omega_i,b_j)$ and $\vP(a_i,\omega_j)$ are $\fF$--equivalences.
\end{prp}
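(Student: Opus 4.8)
The plan is to verify the two conditions of Proposition \ref{p:NaturalEquiv} for the specified data, i.e., that $\vP(\omega_i,b_j)$ and $\vP(a_i,\omega_j)$ are $\fF$--equivalences for all $i,j\in I$. The whole point is that the chosen points were set up so that $a_i$ lies below the past-stabilizer $c_i$ of the totally stable pair $(A_i,A_j)$ and $b_j$ lies above the future-stabilizer $d_j$, so that one is always concatenating with paths that live in regions where stabilization has already occurred. I would treat the two maps separately, each in turn using one half of the total stability.

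First I would handle $\vP(a_i,\omega_j):\vP(a_i,a_j)\to\vP(a_i,b_j)$, which is given by $\alpha\mapsto \alpha*\omega_j$, i.e., by $\vP(\sigma_{a_i},\omega_j)$ in the notation of the concatenation maps. Here $\omega_j\in\vP(a_j,b_j)$ with $d_j\leq b_j$, and $d_j\in\St^+_\fF(c_i;A_j)$. Since $a_i\leq c_i$, Proposition \ref{p:PropertiesOfStabilizers}.(c) (or rather the monotonicity in the first argument, via \ref{p:PropertiesOfStabilizers}.(a) applied in $A_i$ together with the definition of the stabilizer) gives $d_j\in\St^+_\fF(a_i;A_j)$ as well — more carefully, $d_j$ future $\fF$--stabilizes $\sigma_{c_i}$ in $A_j$, and since every d-path from $a_i$ factors appropriately one gets that $(A_j)_{\geq d_j}$ is future $\fF$--invariant with respect to $\sigma_{a_i}$. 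Then $\omega_j\in\vP(a_j,b_j)\subseteq\vP_{(A_j)_{\geq d_j}}$ (using $b_j,d_j\in A_j$ and $d$-convexity of $A_j$, so $\omega_j$ stays in $A_j$, and that $\omega_j$ starts at $a_j\geq d_j$ — wait, here one needs $a_j\geq d_j$, which one can arrange, or instead invoke $\St^+$ cofinality). To be safe I would instead argue: by definition of the future system of representatives $\bb$ with $d_j\leq b_j$, and total stability, the concatenation with $\omega_j$ into $b_j$ is an $\fF$--equivalence because $b_j$ future $\fF$--stabilizes $\sigma_{a_i}$ in $A_j$ — this is exactly Proposition \ref{p:PropertiesOfStabilizingPairs}.(a) applied with the stabilizing pair $(c_i,d_j)$, the path $\alpha=\sigma_{a_i}\in\vP_{(A_i)_{\leq c_i}}$ and $\beta=\omega_j\in\vP_{(A_j)_{\geq d_j}}$ (noting $\omega_j$ can be chosen inside $(A_j)_{\geq d_j}$, or replaced by a homotopic such path using \ref{p:PropertiesOfStabilizers}.(g)). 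Composing, $\vP(a_i,\omega_j)=\vP(\sigma_{a_i},\omega_j)$ is an $\fF$--equivalence.

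Symmetrically, for $\vP(\omega_i,b_j):\vP(b_i,b_j)\to\vP(a_i,b_j)$, which is $\alpha\mapsto\omega_i*\alpha=\vP(\omega_i,\sigma_{b_j})$, I would apply the past half: $\omega_i\in\vP(a_i,b_i)$ with $a_i\leq c_i$, and $c_i\in\St^-_\fF(A_i;d_j)$ with $d_j\leq b_j$; again using $\vP(\omega_i,\sigma_{b_j})$ with $\omega_i\in\vP_{(A_i)_{\leq c_i}}$ and $\sigma_{b_j}\in\vP_{(A_j)_{\geq d_j}}$, Proposition \ref{p:PropertiesOfStabilizingPairs}.(a) (for the stabilizing pair $(c_i,d_j)$ of the totally stable pair $(A_i,A_j)$) shows it is an $\fF$--equivalence. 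This is genuinely symmetric to the first case via the opposite d-space $X^{op}$, so it requires no new idea.

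The main obstacle is bookkeeping: making sure the chosen paths $\omega_i$ really can be taken to lie in $(A_i)_{\geq?}$ / start or end at the right stabilizers, and invoking the correct monotonicity statements from Proposition \ref{p:PropertiesOfStabilizers} (parts (a), (b) and (g)) to move between "$z$ stabilizes $\sigma_{a}$" and "$z$ stabilizes $\sigma_{c}$ when $a\leq c$", and between a path and its homotopy class. Once the indices are lined up, each of the two statements is a single application of Proposition \ref{p:PropertiesOfStabilizingPairs}.(a). I would write the proof as: "Both statements follow from Proposition \ref{p:PropertiesOfStabilizingPairs}.(a) applied to the totally stable pair $(A_i,A_j)$ with stabilizing pair $(c_i,d_j)$, once one observes $a_i\in(A_i)_{\leq c_i}$, $b_j\in(A_j)_{\geq d_j}$, and that $\omega_i$ may be chosen in $(A_i)_{\leq c_i}$ (resp. $(A_j)_{\geq d_j}$) up to the homotopy that does not affect $\fF$--equivalence of the induced map, by Proposition \ref{p:PropertiesOfStabilizers}.(g)."
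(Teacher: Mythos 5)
Your proposal has a genuine gap, and it is the same gap in both halves. You want each statement to be a single application of Proposition \ref{p:PropertiesOfStabilizingPairs}.(a) for the stabilizing pair $(c_i,d_j)$, but the hypotheses of that proposition require the concatenated paths to lie in $(A_i)_{\leq c_i}$ (first slot) and $(A_j)_{\geq d_j}$ (second slot). The connecting path $\omega_j\in\vP(a_j,b_j)$ runs from the \emph{past} representative $a_j$ (chosen with $a_j\leq c_j$) to the \emph{future} representative $b_j$ (chosen with $b_j\geq d_j$), so in general it is contained in neither $(A_j)_{\leq c_j}$ nor $(A_j)_{\geq d_j}$; likewise $\omega_i\in\vP(a_i,b_i)$ is not contained in $(A_i)_{\leq c_i}$, since nothing forces $b_i\leq c_i$ (typically $b_i$ is a ``large'' point and $c_i$ is not above it). Your proposed fixes do not work: a homotopy in $\vP_X(a_j,b_j)$ fixes the endpoints, so Proposition \ref{p:PropertiesOfStabilizers}.(g) cannot move $\omega_j$ into $(A_j)_{\geq d_j}$ when $a_j\not\geq d_j$; and one cannot ``arrange $a_j\geq d_j$'' --- the points $a_j,b_j,c_j,d_j,\omega_j$ are already fixed in the statement, and the whole construction requires $a_j$ to be a small point ($a_j\leq c_j$, past representative) while $d_j$ is a future stabilizer, so these are generally incomparable in the wrong direction.

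The missing idea is that one must stabilize the \emph{whole path} $\omega_i$ (resp. $\omega_j$), not just constant paths, and then conclude by 2--out--of--3. Concretely, for $\vP(\omega_i,b_j)$ the paper chooses $b'_j\in\St^+_\fF(\omega_i;A_j)_{\geq b_j}$ (nonempty because the pair $(A_i,A_j)$ is future stable with respect to the d-path $\omega_i\in\vP_{A_i}$, together with Proposition \ref{p:PropertiesOfStabilizers}.(\ref{i:PCofinal})) and a path $\eta_j\in\vP(b_j,b'_j)$, and compares $\vP(\omega_i,b_j)$ with $\vP(\omega_i,b'_j)$ through the square whose vertical maps are concatenation with $\eta_j$: the map $\vP(b_i,b_j)\to\vP(b_i,b'_j)$ is an $\fF$--equivalence because $\bb$ is a future system of representatives ($b_j\in\St^+_\fF(b_i;A_j)$ and $\eta_j\in\vP_{(A_j)_{\geq b_j}}$), the map $\vP(a_i,b_j)\to\vP(a_i,b'_j)$ is an $\fF$--equivalence by Proposition \ref{p:PropertiesOfStabilizingPairs}.(a) (here $\eta_j$, unlike $\omega_j$, really does lie in $(A_j)_{\geq d_j}$ since $b_j\geq d_j$), and the bottom map $\vP(\omega_i,b'_j)$ is an $\fF$--equivalence because $b'_j$ stabilizes $\omega_i$; then 2--out--of--3 gives the claim, and the dual argument (choosing $a'_i\in\St^-_\fF(A_i;\omega_j)_{\leq a_i}$) handles $\vP(a_i,\omega_j)$. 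Your write-up never invokes the stabilizer of the path $\omega_i$ or $\omega_j$, which is exactly where total stability of the component system (beyond the single stabilizing pair $(c_i,d_j)$) is needed.
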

\begin{proof}
	There exist $b'_j\in \St^+_\fF(\omega_i;A_j)_{\geq b_j}$ and $\eta_j\in \vP(b_j,b'_j)$. We have a commutative diagram
	\[
		\begin{diagram}
			\node{\vP(b_i,b_j)}
				\arrow{e,t}{\vP(\omega_i,b_j)}
				\arrow{s,l}{\simeq}
			\node{\vP(a_i,b_j)}
				\arrow{s,r}{\simeq}
		\\
			\node{\vP(b_i,b'_j)}
				\arrow{e,t}{\simeq}
			\node{\vP(a_i,b'_j)}
		\end{diagram}
	\]
	induced by the paths $\eta_j$ and $\omega_i$. All maps marked $\simeq$ are $\fF$--equivalences:
	\begin{itemize}
	\item{The left vertical map, since $b_j\in\St^+_\fF(b_i;A_j)$.}
	\item{The right vertical map, from Proposition \ref{p:PropertiesOfStabilizingPairs}, since $(c_i,d_j)$ is a stabilizing pair.}
	\item{The bottom map, since $b'_j\in\St^+_\fF(\omega_i;A_j)$.}
	\end{itemize}
	Thus, $\vP(\omega_i,b_j)$ is an $\fF$--equivalence. The dual argument shows that $\vP(a_i,\omega_j)$ is also an $\fF$--equivalence.
\end{proof}

\begin{prp}
	Assume that $\cA$ is a finite total $\fF$--component system on $X$. Then the categories $\vSP^+_{\fF,L}(X,\cA)$ and  $\vSP^-_{\fF,L}(X,\cA)$ are isomorphic.
\end{prp}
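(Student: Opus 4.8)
The proof is essentially a bookkeeping exercise, since all the substantive work has already been done. First I would unwind the definitions: by Definition \ref{d:ComponentCategory}, $\vSP^+_{\fF,L}(X,\cA)$ is the $\cS$--category $\cT^\bb = L_*(\vT(X)|_\bb)$ for a future system of representatives $\bb$ of $\cA$, while $\vSP^-_{\fF,L}(X,\cA) = \cT^\ba = L_*(\vT(X)|_\ba)$ for a past system of representatives $\ba$. By Proposition \ref{p:TraceCatIndependent}, applied both to $\cA$ and to $\cA$ regarded as a future component system on $X^{op}$, neither $\cS$--category depends (up to isomorphism) on the chosen system of representatives. Consequently, in order to compare them, I am free to use the particular $\ba = \{a_i\}$, $\bb = \{b_i\}$ and the connecting d-paths $\{\omega_i \in \vP(a_i,b_i)\}_{i\in I}$ constructed right before Proposition \ref{p:FPEq}, where $a_i \le c_i$ and $d_i \le b_i$ for the stabilizing pairs $(c_i,d_i)$ furnished by Proposition \ref{p:CommonStabilizingPairs}.

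With those choices fixed, Proposition \ref{p:FPEq} asserts exactly that both maps $\vP(\omega_i,b_j)\colon \vP(b_i,b_j) \to \vP(a_i,b_j)$ and $\vP(a_i,\omega_j)\colon \vP(a_i,a_j) \to \vP(a_i,b_j)$ are $\fF$--equivalences for all $i,j \in I$; these are precisely the hypotheses of Proposition \ref{p:NaturalEquiv}. Invoking that proposition produces the morphisms $F_{i,j}$ of (\ref{e:Fij}), which assemble into a functor $F\colon L_*(\vT(X)|_\ba) \to L_*(\vT(X)|_\bb)$ that is the identity on the (common) object set $I$. Since each $F_{i,j}$ is an isomorphism in $\cS$ — being the composite of an isomorphism with the inverse of an isomorphism — the functor $F$ is not merely an equivalence but an isomorphism of $\cS$--categories, with strict inverse given by the $G_{i,j} = F_{i,j}^{-1}$ appearing in the proof of \ref{p:NaturalEquiv}; the fact that $F$ and $G$ preserve identities and compositions is already verified there.

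Putting this together yields the isomorphism of $\cS$--categories
\[
	\vSP^-_{\fF,L}(X,\cA) = L_*(\vT(X)|_\ba) \xrightarrow{\ F\ } L_*(\vT(X)|_\bb) = \vSP^+_{\fF,L}(X,\cA),
\]
which is the claim. I do not anticipate a genuine obstacle: the difficult content lives in Propositions \ref{p:NaturalEquiv} and \ref{p:FPEq} and in the preparatory construction of $\ba$, $\bb$ and $\{\omega_i\}$. The only things needing a little care are (i) quoting Proposition \ref{p:TraceCatIndependent} in both its future and its opposite (past) form, so that the two component categories really are the specific $\cT^\bb$ and $\cT^\ba$ to which Proposition \ref{p:FPEq} applies, and (ii) observing that the ``equivalence'' output by Proposition \ref{p:NaturalEquiv} is in fact an honest isomorphism because it fixes $I$ and is invertible on every morphism object.
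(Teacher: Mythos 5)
Your proof is correct and follows exactly the paper's route: the paper also fixes the future system $\bb$, the past system $\ba$ and the paths $\omega_i$ as in the paragraph before Proposition \ref{p:FPEq}, and then cites Propositions \ref{p:NaturalEquiv} and \ref{p:FPEq} to conclude, with the independence of the choice of representatives handled by Proposition \ref{p:TraceCatIndependent}. Your added remark that the resulting equivalence is in fact an isomorphism (identity on objects, invertible on all morphism objects) is a correct and welcome clarification of what the paper leaves implicit.
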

\begin{proof}
	This is an immediate consequence of Propositions \ref{p:NaturalEquiv} and \ref{p:FPEq}.
\end{proof}

If $X$ admits a coarsest future (resp. past, total) stable $\fF$--component system, we define \emph{the stable future (resp. past, total) $\fF$--component category} of $X$ as
\begin{equation}
	\vSP^+_{\fF,L}(X):= \vSP^+_{\fF,L}(X,\cM^+(X)),
\end{equation}
resp. $\vSP^-_{\fF,L}(X):= \vSP^-_{\fF,L}(X,\cM^-(X))$,
\begin{equation}
	\vSP^\pm_{\fF,L}(X):= \vSP^+_{\fF,L}(X,\cM^\pm(X))=\vSP^-_{\fF,L}(X,\cM^\pm(X)).
\end{equation}

\section{Examples and final remarks}

Below we provide several examples of calculations of coarsest stable component systems and  stable component categories.

\subsection*{Dubut's example and related examples}

\begin{exa}\label{x:Dubut}
	Consider three d-spaces:
	\begin{equation}
	\begin{tikzpicture}
		\fill (0,0) circle (0.06);
		\fill (2,2) circle (0.06);
		\draw[thick,->](0,0) arc (270:315:2 and 2);
		\draw[thick](0,0) arc (270:360:2 and 2);
		\draw[thick,->](0,0) arc (0:45:-2 and 2);
		\draw[thick](0,0) arc (0:90:-2 and 2);
		
		\fill[color=black!10!white] (4,0)--(4,2)--(6,2)--(6,0)--(4,0);
		\fill[color=white] (4.6,0.6)--(4.6,1.4)--(5.4,1.4)--(5.4,0.6)--(4.6,0.6);
		\draw[thick] (4,0)--(4,2)--(6,2)--(6,0)--(4,0);
		\draw[thick] (4.6,0.6)--(4.6,1.4)--(5.4,1.4)--(5.4,0.6)--(4.6,0.6);
		\draw[thick,->] (4,0)--(5,0);
		\draw[thick,->] (4,2)--(5,2);
		\draw[thick,->] (4,0)--(4,1);
		\draw[thick,->] (6,0)--(6,1);
		
		\fill[color=black!10!white] (8,0)--(11,0)--(11,1)--(9,1)--(9,2)--(8,2);
		\draw[thick] (8,0)--(11,0)--(11,1)--(8,1);
		\draw[thick] (8,0)--(8,2)--(9,2)--(9,0);
		\draw[thick] (10,0)--(10,1);
		\node[above] at (8.5,2) {v};
		\node[below] at (10.5,0) {v};
		\draw[thick,->] (8,0)--(8,0.5);
		\draw[thick,->] (9,0)--(9,0.5);
		\draw[thick,->] (10,0)--(10,0.5);
		\draw[thick,->] (11,0)--(11,0.5);
		\draw[thick,->] (8,1)--(8,1.5);
		\draw[thick,->] (9,1)--(9,1.5);
		\draw[thick,->] (8,0)--(8.5,0);
		\draw[thick,->] (9,0)--(9.5,0);
		\draw[thick,->] (10,0)--(10.5,0);
		\draw[thick,->] (8,1)--(8.5,1);
		\draw[thick,->] (9,1)--(9.5,1);
		\draw[thick,->] (10,1)--(10.5,1);
		\draw[thick,->] (8,2)--(8.5,2);
		
		\node[left] at (0,1) {$X_1:$};
		\node[left] at (4,1) {$X_2:$};
		\node[left] at (8,1) {$X_3:$};
		
		\node[below left] at (0,0) {$\mathsf{A}$};
		\node[above right] at (2,2) {$\mathsf{D}$};
		\node[below right] at (0.5,1.5) {$\mathsf{B}$};
		\node[above left] at (1.5,0.5) {$\mathsf{C}$};
		
		\node at (8.5,0.5) {$\mathsf{A}$};
		\node at (8.5,1.5) {$\mathsf{B}$};
		\node at (9.5,0.5) {$\mathsf{C}$};
		\node at (10.5,0.5) {$\mathsf{D}$};

		\node at (4.3,0.3) {$\mathsf{A}$};
		\node at (4.3,1.7) {$\mathsf{B}$};
		\node at (5.7,0.3) {$\mathsf{C}$};
		\node at (5.7,1.7) {$\mathsf{D}$};
		
		\draw (4,0.6)--(4.6,0.6)--(4.6,0);
		\draw (5.4,2)--(5.4,1.4)--(6,1.4);
	\end{tikzpicture}
	\end{equation}
	In the space $X_3$, the edges marked v are identified. All points in the boundaries between the areas $A,B,C,D$ belong either to $A$ or to $D$.  All spaces of d-paths between particular points are discrete up to homotopy; therefore, the coarsest $\fF$--component system does not depend on the choice of the system of equivalences $\fF$.	
	The coarsest systems of all these spaces are similar:
	\begin{itemize}
	\item{The coarsest future system is $\{A,B\cup C\cup D\}$,}
	\item{The coarsest past system is $\{A\cup B\cup C, D\}$,}
	\item{The coarsest total system is $\{A,B, C, D\}$.}
	\end{itemize}
	
	For every  $\mu\in\{+,-,\pm\}$, the categories $\vSP^\mu_{\fF,L}(X_i)$ are all isomorphic for $i=1,2,3$. For $\mu\in \{+,-\}$, $\vSP^\mu_{\fF,L}(X_i)$ has two objects $0,1$ and the morphisms given by $\vSP^\mu_{\fF,L}(X_i)(0,1)=L(S^0)$. The category $\vSP^\pm_{\fF,L}(X_i)$ has four objects $A,B,C,D$.
	For $X_1$ and $X_2$ these coincide with the component categories defined in \cite{GHComp2}.
\end{exa}

\subsection*{The boundary of the directed cube}

The boundary of the directed $n$--cube, $\partial\vI^n$, is the geometric realization of the pre-cubical set $\partial \square^n$, which is obtained from $\square^n$ by removing the sole $n$--cube $(*,\dots,*)$ (see Example \ref{x:nCube}). 	
	As a consequence of Theorem \ref{t:InteriorSystem}, $\mathcal{U}:=\{U_c\}_{c\in\coprod_k \partial\square^n[k]}$ is a total stable component system on $X$ (for any class of equivalences $\fF$). Our goal is to find the coarsest stable $\fF_k$--component systems for all $k$. To achieve this, we need to calculate the homotopy types of the d-path spaces between any two particular points of $\partial \vI^n$.

\begin{prp}\label{p:DPathSpacesInPartialCube}
	Let $\bx=(x_1,\dots,x_n)$ and $\by=(y_1,\dots,y_n)$ be points of $\partial\vI^n$. Let $\ba=(a_1,\dots,a_n)$ and $\bb=(b_1,\dots,b_n)$, $a_i,b_i\in \{0,1,*\}$, be the cubes of $\partial\square^n$ such that $\bx\in U_\ba$, $\by\in U_\bb$ (see Example \ref{x:UVCube}). Then:
	\begin{itemize}
	\item{If $x_i>y_i$ for some $i$, then $\vP(\partial\vI^n)_\bx^\by=\emptyset$.}
	\item{Otherwise, if $(a_i,b_i)\in \{(0,0),(0,*),(*,1),(1,1)\}$ for some $i$, then $\vP(\partial\vI^n)_\bx^\by$ is contractible.}
	\item{If neither of the conditions above is satisfied, then we have $(a_i,b_i)\in\{(0,1),(*,*)\}$ for all $i$. In this case, $\vP(\partial\vI^n)_\bx^\by$ is homotopy equivalent to the sphere $S^{r-2}$, where $r=\card(\{i\;|\; (a_i,b_i)=(0,1)\})$.}
	\end{itemize}
\end{prp}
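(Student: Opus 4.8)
The first bullet is immediate from the definition of the partial order on $\vI^n$ (a directed path in $\partial\vI^n\subseteq\vI^n$ has non-decreasing coordinates), so the only content is in the second and third bullets, where we may assume $x_i\leq y_i$ for all $i$. The plan is to reduce the computation to a known model for the path space of a directed cube and then quotient by the missing top cell. First I would normalize: by Proposition \ref{p:CubPaths} (or rather its underlying idea, since $\bx,\by$ need not lie in disjoint cubes here) the homotopy type of $\vP(\partial\vI^n)_\bx^\by$ depends only on the pair of cubes $(\ba,\bb)$ carrying $\bx$ and $\by$, not on the precise interior coordinates; concretely one slides $\bx$ down and $\by$ up within their carrier cells using non-decreasing reparametrization d-maps as in the proof of \ref{p:CubPathHEq}, so without loss of generality $\bx$ and $\by$ are "generic" interior points of $U_\ba$ and $U_\bb$. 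Coordinatewise, $x_i\leq y_i$ together with $a_i\in\{0,*,1\}$, $b_i\in\{0,*,1\}$ forces $(a_i,b_i)$ to lie in one of the five pairs $(0,0),(1,1),(0,*),(*,1),(0,1)$, or $(*,*)$ — all other combinations either violate $x_i\leq y_i$ or are impossible for generic points.

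Next I would dispose of the contractible case. If $(a_i,b_i)\in\{(0,0),(0,*),(*,1),(1,1)\}$ for some coordinate $i$, then every directed path from $\bx$ to $\by$ is constrained in coordinate $i$: it either stays on the face $\{t_i=0\}$, stays on $\{t_i=1\}$, runs from $0$ to something in $(0,1)$, or runs from something in $(0,1)$ to $1$. In each case the path is forced to lie in a proper face of $\vI^n$ of the form $\{t_i=\varepsilon\}$ for part of its length, but more usefully: since coordinate $i$ is "pinned" at an endpoint of the cube, the whole path is confined to a sub-pre-cubical set of $\partial\square^n$ that is d-homeomorphic to (a subcomplex of) $\vI^{n-1}$ — in particular it avoids the missing top cell entirely and lives in a genuinely lower-dimensional directed cube. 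The directed path space between two points of $\vI^{n-1}$ is contractible (it is a directed cube, where all d-path spaces are contractible), and confining to the relevant face does not change this. So $\vP(\partial\vI^n)_\bx^\by$ is contractible. I would write this out by identifying the minimal face of $\vI^n$ containing both $\bx$ and $\by$ and observing it is a proper face, hence contained in $\partial\vI^n$, hence all d-paths between the two points already lie in that face.

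Finally, the sphere case. Suppose $(a_i,b_i)\in\{(0,1),(*,*)\}$ for all $i$; let $S=\{i:(a_i,b_i)=(0,1)\}$ with $|S|=r$. On the coordinates outside $S$ we have $(a_i,b_i)=(*,*)$, meaning $x_i,y_i\in(0,1)$; those coordinates contribute a contractible factor and can be collapsed, so the homotopy type only depends on the restriction to the $r$ coordinates in $S$, i.e.\ we may assume $\bx$ is near the bottom vertex $\mathbf{0}$ and $\by$ near the top vertex $\mathbf{1}$ of $\vI^r$, and the question becomes the homotopy type of the space of directed paths from (near) $\mathbf{0}$ to (near) $\mathbf{1}$ in $\partial\vI^r = \vI^r\setminus\interior(\vI^r)$. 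This is exactly the classical computation: $\vP(\vI^r)_\mathbf{0}^\mathbf{1}$ is contractible, $\vP(\partial\vI^r)_\mathbf{0}^\mathbf{1}$ is obtained by removing the directed paths that pass through the interior top cell, and one shows it is homotopy equivalent to $S^{r-2}$ — e.g.\ via the standard deformation of d-paths onto "staircase" paths and the nerve of the cover by the $2r$ facets, or by noting $\vT(\partial\vI^r)_\mathbf{0}^\mathbf{1}$ deformation retracts onto the order complex of proper nonempty subsets of $\{1,\dots,r\}$, which is the barycentric subdivision of $\partial\Delta^{r-1}\cong S^{r-2}$. For $r=1$ we get $S^{-1}=\emptyset$ (consistent: there is no directed path from $0$ to $1$ staying on $\partial\vI^1=\{0,1\}$), and for $r=0$ the empty sphere convention again matches the fact that then $\bx=\by$ forces... — actually $r\geq 1$ whenever the previous bullets fail to apply, so the edge cases are $r=1$ (empty) and $r=2$ ($S^0$, two staircase paths).

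The main obstacle is the sphere computation: identifying $\vP(\partial\vI^r)_\mathbf{0}^\mathbf{1}\simeq S^{r-2}$ rigorously. I expect to handle it by an explicit homotopy equivalence to a combinatorial model — replacing path spaces by trace spaces (justified by the weak equivalence $\vP\to\vT$ recalled in the preliminaries), stratifying traces by which facets they meet, and recognizing the resulting poset/nerve as $\sd\,\partial\Delta^{r-1}$. This is a known result for directed cubes, so the work is in citing or reproving it in the pre-cubical language of this paper rather than in any genuinely new argument.
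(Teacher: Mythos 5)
Your outline is fine for the first and third bullets, but the second bullet contains a genuine gap, and it sits exactly where the paper has to work hardest. Your concrete plan there --- identify the minimal face of $\vI^n$ containing both $\bx$ and $\by$, observe it is proper, and conclude that all d-paths already lie in it --- is only valid when some coordinate satisfies $(a_i,b_i)\in\{(0,0),(1,1)\}$, i.e.\ $x_i=y_i\in\{0,1\}$, so that the coordinate is constant along every d-path. In the subcases $(a_i,b_i)=(0,*)$ or $(*,1)$ the $i$-th coordinate is \emph{not} constant, hence not pinned in the minimal face: for $\bx=(0,0,0)$, $\by=(1,1,\tfrac{1}{2})$ in $\partial\vI^3$ one has $(a_3,b_3)=(0,*)$, yet the minimal face containing both points is all of $\vI^3$, and no single proper face contains all the d-paths (they sweep across the whole bottom square before climbing the walls). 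Your fallback justification --- the path avoids the missing top cell and lives in a lower-dimensional region, ``hence'' the path space is as for a directed cube, hence contractible --- is also insufficient: every element of $\vP(\partial\vI^n)_\bx^\by$ avoids the top cell by definition, and the region $(\vI^{n-1}\times\{0\})\cup(\partial\vI^{n-1}\times[0,y_n])$ in which such paths live is not a directed cube; indeed the third bullet itself shows that $(n-1)$-dimensional sub-d-spaces of this kind can have spherical path spaces. The paper handles precisely this subcase by an explicit homotopy equivalence $\vP(\partial\vI^n)_\bx^\by\simeq\vP(\vI^{n-1})_{\bx'}^{\by'}\times\vP(\vI)_0^{y_n}$, with maps $F(\alpha)=(\alpha',\alpha_n)$ and $G(\beta,\gamma)=(\beta,\sigma_0)*(\sigma_{\by'},\gamma)$ and a homotopy $H_s$ whose values must be checked to remain in $\partial\vI^n$; the check uses that for such a path there is a time $c_\alpha$ before which $\alpha_n\equiv 0$ and after which $\alpha'\in\partial\vI^{n-1}$ (in particular $\by'\in\partial\vI^{n-1}$, which is what makes $G$ well defined). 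Your proposal has no substitute for this step, so as written the contractibility claim is unproved whenever the only witnessing coordinates are of type $(0,*)$ or $(*,1)$.

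The rest essentially coincides with the paper. Bullet 1 is immediate for both of you; for bullet 3 the paper observes $(\partial\vI^n)_{\geq\bx}^{\leq\by}=\prod_{j\in B}[x_j,y_j]\times\partial\vI^{A}$, so the path space factors as a contractible piece times $\vP(\partial\vI^{A})_{(0,\dots,0)}^{(1,\dots,1)}$, and then cites the known equivalence with $S^{r-2}$ from Raussen--Ziemia\'nski; your ``collapse the $(*,*)$ coordinates and quote the sphere computation'' is the same argument (note that in this case the $A$-coordinates of $\bx$ and $\by$ are exactly $0$ and $1$, so no ``near the corner'' adjustment is needed), and your combinatorial model would serve equally well. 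Finally, the opening normalization ``slide $\bx$ and $\by$ within their carrier cells, so the homotopy type depends only on $(\ba,\bb)$'' is not what Proposition \ref{p:CubPaths} says and would itself need proof, but it is also unnecessary: the list of admissible pairs $(a_i,b_i)$ follows directly from $x_i\leq y_i$ and the definition of $U_\ba$, $U_\bb$.
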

\begin{proof}
	The first statement is obvious. Assume that $x_i\leq y_i$ for all $i$ and that there exists $i$ such that $(a_i,b_i)\in \{(0,0),(0,*),(*,1),(1,1)\}$; without loss of generality we may assume that $i=n$. Denote $\bx'=(x_1,\dots,x_{n-1})$, $\by'=(y_1,\dots,y_{n-1})$. For a path $\alpha\in\vP(\partial\vI^n)_\bx^\by$, let $\alpha=(\alpha',\alpha_n)$ be the presentation such that $\alpha'\in\vP(\vI^{n-1})_{\bx'}^{\by'}$, $\alpha_n\in\vP(\vI)_{x_n}^{y_n}$.
	 If $(a_n,b_n)=(0,0)$, then $x_n=y_n=0$ and
	\[
		\vP(\partial\vI^n)_\bx^\by=\vP(\vI^{n-1}\times \{0\})_{\bx}^{\by}\cong \vP(\vI^{n-1})_{\bx'}^{\by'}
	\]
	 is a contractible space. A similar argument works for $a_n=b_n=1$. If $(a_n,b_n)=(0,*)$, then we will show that the maps
	 \begin{align*}
	 	\vP(\partial\vI^n)_\bx^\by  \xtofrom[(\beta,\sigma_0)*(\sigma_{\by'},\gamma)\reflectbox{\tiny $\mapsto$}(\beta,\gamma):G]{F:\alpha\mapsto (\alpha',\alpha_n)}  \vP(\vI^{n-1})_{\bx'}^{\by'} \times \vP(\vI)_0^{y_n}
	 \end{align*}
	 are mutual homotopy inverses. A homotopy between $FG$ and $\id$ is given by point-wise convex combinations. The formula
	 \[
	 	H_s(\alpha)(t)=
	 	\begin{cases}
	 		(\alpha'((1+s)t),0) & \text{for $0\leq t\leq s(1+s)^{-1}$},\\
	 		(\alpha'((1+s)t),\alpha_n((1+s)t-s)) & \text{for $s(1+s)^{-1}\leq t\leq (1+s)^{-1}$},\\
	 		(\by',\alpha_n((1+s)t-s)) & \text{for $(1+s)^{-1}\leq t\leq 1$,}\\
	 	\end{cases}
	 \]
	$s,t\in[0,1]$, defines a homotopy between $\id_{\vP(\partial\vI^n)_\bx^\by}$ and $GF$ but we need to check that $H_s(\alpha)(t)\in\partial\vI^n$ for all $s,t,\alpha$. This is obvious for $t\leq s(1+s)^{-1}$.  Since $\alpha(t)\in\partial\vI^n$ for every $t$, then there exists $c_\alpha\in[0,1)$ such that $\alpha_n(t)=0$ for $t\in [0,c_\alpha]$ and $\alpha'(t)\in\partial\vI^{n-1}$ for $t\in [c_\alpha,1]$. In particular, $\by'\in\partial\vI^{n-1}$, which implies that $H_s(\alpha)(t)\in \partial\vI^n$ for $t\in[(1+s)^{-1}, 1]$. For $t\in [s(1+s)^{-1},(1+s)^{-1}]$, we have either $(1+s)t\geq c_\alpha$ or $((1+s)t-s)\leq (1+s)t\leq c_\alpha$. Thus, the homotopy $H_s$ is well-defined. As a consequence, $\vP(\partial\vI^n)_\bx^\by$ is contractible also in this case, as well as for $(a_n,b_n)=(*,1)$.
	 
	 Finally, assume that $(a_i,b_i)\in\{(0,1),(*,*)\}$ for all $i$, and denote $A=\{i\;|\; (a_i,b_i)=(0,1)\}$, $B=\{j\;|\; (a_j,b_j)=(*,*)\}$. Since
	 \[
	 	(\partial\vI^n)_{\geq \bx}^{\leq \by}= \prod_{j\in B} [x_j,y_j] \times \partial \vI^{A},
	 \]
	 we have
	 \[
	 	\vP(\partial\vI^n)_\bx^\by = \vP(\prod_{j\in B} [x_j,y_j] \times \partial \vI^{A})_\bx^\by \cong \prod_{j\in B} \vP([x_j,y_j])_{x_j}^{y_j} \times \vP(\partial\vI^A)_{(0,\dots,0)}^{(1,\dots,1)} \simeq \{*\} \times S^{\card(A)-2}.
	 \]
	 The last equivalence is proven in \cite[2.6.1]{RZ}.
\end{proof}

Now we are ready to calculate the coarsest future $\fF_k$--component systems on $\partial\vI^n$.

\begin{exa}
	 By Proposition \ref{p:DPathSpacesInPartialCube} we have
	\begin{equation}
		\vP_{\partial\vI^n}((0,\dots,0),(1,\dots,1))\simeq S^{n-2},
	\end{equation}
	and $\vP_{\partial\vI^n}(\bx,(1,\dots,1))$ is contractible for all other points $\bx\in \partial\vI^n$. Thus,
	\begin{enumerate}[\normalfont (a)]
	\item{
		If $k\geq n-2$, then  the coarsest future $\fF_k$--component system on $\partial\vI^n$ consists of two subsets $A_0=\{(0,0,\dots,0)\}$ and $A_1=\partial\vI^n\setminus \{0,0,\dots,0\}$. The future stable component category $\vSP^+_{\fF_k,L}(\partial\vI^n)$ is determined by $\vSP^+_{\fF_k,L}(\partial\vI^n)(0,1)=L(S^{n-2})$.
	}
	\item{
		If $k<n-2$, then $S^{n-2}$ is $\fF_k$--contractible. Thus, the singleton $\{\partial\vI^n\}$ is the coarsest total $\fF_k$--component system on $\partial\vI^n$, and the category $\vSP^+_{\fF_k,L}(\partial\vI^n)$ is trivial.
	}
	\end{enumerate}
	Since $\partial\vI^n$ is d-homeomorphic to its opposite d-space, similar facts hold for past component systems.
\end{exa}

\begin{prp}
	For $n\geq 3$ and $k\geq n-2$, $\mathcal{U}=\{U_c\}_{c\in \coprod_{k\geq 0} \partial\square^n[k]}$ (see {\normalfont(\ref{e:Uc})}) is the coarsest total stable component system on $\partial\vI^n$.
\end{prp}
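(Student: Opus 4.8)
The plan is to use the machinery already in place. By Theorem~\ref{t:InteriorSystem}(a), $\mathcal U=\{U_c\}$ is a finite total stable $\fF$--component system on $\partial\vI^n$ for every $\fF$, in particular for $\fF=\fF_k$; so Theorem~\ref{t:UniqueSystem} gives a coarsest total stable $\fF_k$--component system $\cM:=\cM^\pm_{\fF_k}(\partial\vI^n)$, and $\cM$ is coarser than $\mathcal U$. Hence each cell $U_c$ lies in a unique $M(c)\in\cM$, and it suffices to show $M(c)=U_c$ for all $c$; equivalently, I will show that no component $M$ of any total stable $\fF_k$--component system $\cA$ on $\partial\vI^n$ meets two distinct cells.

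First I would set up the combinatorics. Write $\preceq$ for the product over the $n$ coordinates of the total order $0\prec\ast\prec 1$; since a d-path from $x$ to $y$ in $\partial\vI^n$ forces $\operatorname{supp}(x)\preceq\operatorname{supp}(y)$, d-convexity of $M$ together with future and past connectedness produce a $\preceq$--least cell $c_-$ and a $\preceq$--greatest cell $c_+$ of $M$; if $M$ meets two cells then $c_-\prec c_+$, every cell $d$ with $c_-\preceq d\preceq c_+$ has $U_d\subseteq M$, and every point of $M$ is reached inside $M$ by a d-path from a point of $U_{c_-}$ and reaches inside $M$ a point of $U_{c_+}$. The key computational input is Proposition~\ref{p:DPathSpacesInPartialCube}: for $x\le y$ the space $\vP_{\partial\vI^n}(x,y)$ is $\emptyset$, contractible, or a sphere $S^{r-2}$ with $r=\card\{i:(\operatorname{supp}(x)_i,\operatorname{supp}(y)_i)=(0,1)\}$. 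Because $n\ge 3$ and $k\ge n-2$, neither $\emptyset$ nor any $S^m$ with $0\le m\le n-2$ is $\fF_k$--contractible (a suitable $\pi_m$, or $\pi_0$, detects it); so $\vP_{\partial\vI^n}(x,y)$ is $\fF_k$--contractible exactly when it is contractible, i.e.\ when some coordinate pair $(\operatorname{supp}(x)_i,\operatorname{supp}(y)_i)$ lies in $\{(0,0),(0,\ast),(\ast,1),(1,1)\}$.

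The contradiction would be produced as follows. Choose a d-path $\alpha\in\vP_M$ from $p\in U_{c_-}$ to $q\in U_{c_+}$ and a point $z_0\ge q$, and let $N\in\cA$ be the component containing $z_0$. If $(M,N)$ were future stable, then by Proposition~\ref{p:PropertiesOfStabilizers}(d) there is $z\in N$ with $z\ge z_0$ and $z\in\St^+_{\fF_k}(\alpha;N)$, and applying future $\fF_k$--invariance to the constant path $\sigma_z\in\vP_{N_{\ge z}}$ shows that the concatenation map $\vP_{\partial\vI^n}(q,z)\to\vP_{\partial\vI^n}(p,z)$, $\omega\mapsto\alpha\ast\omega$, is an $\fF_k$--equivalence. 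So the goal is to pick $(c_-,c_+)$ and $z_0$ so that, for \emph{every} $z\in N$ with $z\ge z_0$, the spaces $\vP_{\partial\vI^n}(q,z)$ and $\vP_{\partial\vI^n}(p,z)$ differ $\fF_k$--visibly (different numbers of path-components, e.g.\ $\{*\}$ versus $S^0$, or different $\pi_m$ for some $m\le n-2$), which contradicts future stability of $(M,N)$ and hence the fact that $\cA$ is a component system. For a typical merged component one takes $\operatorname{supp}(z_0)$ to be $c_-$ with its $0$--entries changed to $1$ and its $\ast$--entries left as $\ast$: then $\vP_{\partial\vI^n}(p,z_0)\simeq S^{r-2}$, $r=\card\{i:(c_-)_i=0\}$, while $\vP_{\partial\vI^n}(q,z_0)$ is contractible because $c_-\ne c_+$ forces a coordinate pair $(\ast,1)$. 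The extreme vertices need a separate (and, in fact, guiding) observation: $\mathbf 0=(0,\dots,0)$ and $\mathbf 1=(1,\dots,1)$ are their own components of $\cA$ — if $\mathbf 0\in M'\ne\{\mathbf 0\}$, then a non-constant d-path from $\mathbf 0$ inside $M'$ is not future $\fF_k$--stabilized by the component of $\mathbf 1$ (which is distinct from $M'$, since $\mathbf 0$ and $\mathbf 1$ cannot lie in one component), because $\vP_{\partial\vI^n}(\mathbf 0,\mathbf 1)\simeq S^{n-2}$ is not $\fF_k$--contractible while $\vP_{\partial\vI^n}(q,\mathbf 1)$ is contractible for every $q\ne\mathbf 0$; the remaining merged components are then handled by descending from this case by a ``peeling'' induction on $n-\card\{i:(c_+)_i=1\}$, which lets one choose a test component $N$ in which no point collapses the relevant path-spaces.

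The part I expect to be the main obstacle is exactly this final step: arranging the choice of test point $z_0$ and the peeling order so that the $\fF_k$--detectable difference between $\vP_{\partial\vI^n}(q,z)$ and $\vP_{\partial\vI^n}(p,z)$ genuinely persists for every $z$ above $z_0$ in its component — above all, controlling the components adjacent to the top vertex $\mathbf 1$, where all path-spaces trivialize. This requires a case analysis on the coordinate types of $c_-$ and $c_+$, using Propositions~\ref{p:PropertiesOfStabilizers} and~\ref{p:DPathSpacesInPartialCube} repeatedly together with the finiteness of $\cA$.
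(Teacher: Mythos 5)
Your reduction is sound and matches the paper's opening moves: since $\mathcal U$ is a total stable $\fF_k$--system (Theorem \ref{t:InteriorSystem}) and a coarsest system $\cM:=\cM^\pm_{\fF_k}(\partial\vI^n)$ exists (Theorem \ref{t:UniqueSystem}), every component of $\cM$ is a union of cells and it suffices to show no component contains two distinct cells. The genuine gap is exactly the step you flag as ``the main obstacle'': you never actually produce, for a merged component $M$ with extreme cells $c_-\prec c_+$, a component $N$ and a point $z_0$ such that $\vP(q,z)$ and $\vP(p,z)$ stay $\fF_k$--distinguishable for \emph{every} $z\geq z_0$ in $N$ --- and this is where all the content lies, because path spaces in $\partial\vI^n$ are ``unstable'': they trivialize as the endpoint moves up (this is precisely the phenomenon noted in the paper for $k<n-2$). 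Your sketched ``typical'' choice already breaks down: if $(c_-)_i=\ast$ and $(c_+)_i=1$ for some $i$, your $z_0$ has $\supp(z_0)_i=\ast$, so $z_0\not\geq q$ and $\vP(q,z_0)=\emptyset$ rather than contractible; the claim $\vP(p,z_0)\simeq S^{r-2}$ needs $c_-$ to have no coordinate equal to $1$; and even when $z_0$ is fine, Proposition \ref{p:PropertiesOfStabilizers}(d) only hands you \emph{some} $z\geq z_0$ in the unknown component $N$, at which both path spaces may already have collapsed to points. The auxiliary claim that every cell $d$ with $c_-\preceq d\preceq c_+$ satisfies $U_d\subseteq M$ is also unjustified (convexity of $M$ does not give it), though it is not needed. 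So as it stands the proof establishes only the singleton components $\{\bO\}$ and $\{\bI\}$ (and even there one must take $\beta$ a path up to $\bI$ inside $N$, not the constant path, to get the contradiction), not the general separation of cells.

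For comparison, the paper avoids the stabilizer hunt entirely: it first gets $\{\bO\},\{\bI\}\in\cM$ for free, because $\cM$ is finer than the already computed coarsest future and past systems $\{\{\bO\},\partial\vI^n\setminus\{\bO\}\}$ and $\{\partial\vI^n\setminus\{\bI\},\{\bI\}\}$; then it observes that each top edge cell $U_{e(1,i)}$ is \emph{cofinal} with its $\cM$--component (everything above a point of $U_{e(1,i)}$ is in $U_{e(1,i)}\cup\{\bI\}$, and $\bI$ is excluded), so Proposition \ref{p:CommonStableTraces} makes $\vSP^+_{\fF_k}(U_\ba,U_{e(1,i)})$ an invariant of the component containing $U_\ba$; by Proposition \ref{p:DPathSpacesInPartialCube} this space is $\emptyset$, $\{*\}$ or $S^{n-3}$ according to $a_i$, which recovers $\{i\,|\,a_i=1\}$, and the dual argument with bottom edges recovers $\{i\,|\,a_i=0\}$, forcing equality of the cells. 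If you want to salvage your approach, replacing your ad hoc test points $z_0$ by these cofinal edge cells (so that stabilizers are controlled a priori) is essentially what the paper does; without such control the ``peeling induction'' remains a plan rather than a proof.
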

\begin{proof}
 Obviously,  $\cM^\pm_{\fF_k}(\partial \vI^n)$ is finer than both $\cM^+_{\fF_k}(\partial \vI^n)$ and $\cM^-_{\fF_k}(\partial \vI^n)$, which implies that 
	\[
		\{(0,\dots,0)\}, \{(1,\dots,1)\}\in \cM^\pm_{\fF_k}(\partial \vI^n).
	\]
	By Theorem \ref{t:InteriorSystem}, $\mathcal{U}$ is a total stable component system on $\partial\vI^n$. Therefore, every element of $\cM:=\cM^\pm_{\fF_k}(\partial \vI^n)$ is a union of sets having the form $U_c$. Let $E_{(a_1,\dots,a_n)}$ denote the component of $\cM$ that contains $U_{(a_1,\dots,a_n)}$. Assume that $E_{(a_1,\dots,a_n)}=E_{(b_1,\dots,b_n)}$ for some $a_i,b_i\in \{0,1,*\}$; we will prove that $(a_1,\dots,a_n)=(b_1,\dots,b_n)$. 
	
	For $\varepsilon\in \{0,1\}$ and $i\in \{1,\dots,n\}$ denote $e(\varepsilon,i)=(\varepsilon,\dots,\varepsilon,*,\varepsilon,\dots,\varepsilon)\in \partial\square^n[1]$, where the only star appears at the $i$--th position. Since $(\partial\square^n)_{\geq x}= (U_{e(1,i)})_{\geq x}\cup \{(1,\dots,1)\}$ for every $x\in U_{e(1,i)}$, and $(1,\dots,1)\not\in E_{e(1,i)}$, then $U_{e(i,1)}$ and $E_{e(i,1)}$ are cofinal. Hence, by Proposition \ref{p:CommonStableTraces},
\[
	\vSP^+_{\fF_k}(U_{(a_1,\dots,a_n)},U_{e(1,i)})\simeq \vSP^+_{\fF_k}(E_{(a_1,\dots,a_n)},E_{e(1,i)})\simeq \vSP^+_{\fF_k}(U_{(b_1,\dots,b_n)},U_{e(1,i)}).
\]
It follows from Proposition \ref{p:DPathSpacesInPartialCube} that
\[
	\vSP^+_{\fF_k}(U_{(a_1,\dots,a_n)},U_{e(1,i)})\simeq \begin{cases}
		\emptyset & \text{for $a_i=1$,}\\
		\{*\} & \text{for $a_i\in\{0,*\}$, $(a_1,\dots,a_n)\neq e(0,i)$,}\\
		S^{n-3} & \text{for  $(a_1,\dots,a_n)=e(0,i)$,}
	\end{cases}
\]
which implies that $\{i\;|\; a_i=1\} = \{i\;|\; b_i=1\}$, since neither $S^{n-3}$ nor $\{*\}$ is $\fF_k$--equivalent to the empty set. Notice that the argument fails here for $n=2$. In a similar way, one can show that also $\{i\;|\; a_i=0\} = \{i\;|\; b_i=0\}$ and, therefore, $(a_1,\dots,a_n)=(b_1,\dots,b_n)$.
\end{proof}

\begin{exa} { \ }
	\begin{enumerate}[\normalfont (a)]
	\item{
		If $k\geq n-2$ and $n\geq 3$, then the coarsest total $\fF_k$--component system on $\partial\vI^n$ is the system $\cU=\{U_c\}_{c\in \coprod_k \partial \square^n[k]}$ constructed in Section \ref{s:Cubical}. The morphism objects of $\vSP^\pm_{\fF_k}(\partial\vI^n)$ are given by
		\[
			\vSP^\pm_{\fF_k,L}(\partial\vI^n)((a_1,\dots,a_n),(b_1,\dots,b_n))=\begin{cases}
				L(\emptyset) & \text{if $(a_i,b_i)\in \{(*,0), (1,0), (1,*)\}$ for some $i$,}\\
				L(S^{r-3}) & \text{if $\{(a_i,b_i)\}_{i=1}^n \subseteq \{(0,1),(*,*)\}$}\\
				L(\{*\}) & \text{otherwise,}
			\end{cases}
		\]
		which follows from Proposition \ref{p:DPathSpacesInPartialCube}. Notice that all compositions of morphisms are trivial, since no two morphisms objects of type $L(S^{r-3})$ are composable.
	}
	\item{
		If $k<n-2$ and $n\geq 3$, then $\cM^\pm_{\fF_k,L}(\partial \vI^n) =\{ \partial\vI^n\}$, since the spaces $\vP_{\partial\vI^n}(\bx,(1,\dots,1))$ and $\vP_{\partial\vI^n}((0,\dots,0),\bx)$ are $\fF_k$--contractible for all $\bx\in\partial\vI^n$. There exist pairs $\bx,\by\in\partial\vI^n$ such that $\vP_{\partial\vI^n}(\bx,\by)$ is \emph{not} $\fF_k$--contractible but these non-trivial d-path spaces are ``unstable": they trivialize when $\bx$ (resp. $\by$) is replaced by some smaller (resp. larger) point.
	}
	\item{
		If $n=2$, then $\partial\vI^2$ is d-homeomorphic to the space $X_1$ from Example \ref{x:Dubut}, and we have
\[
	\cM^\pm_{\fF}(\partial\square^2) =\{\{(0,0)\},\{(1,1)\},U_{(0,*)}\cup (0,1)\cup U_{(*,1)}, U_{(*,0)}\cup (1,0)\cup U_{(1,*)} \}
\]
	for every equivalence system $\fF$.
	}	
	\end{enumerate}
\end{exa}

\subsection*{Directed suspension}

\begin{exa}
	Let $X$ be a d-space with no loops. \emph{The directed suspension} of $X$ is the d-space
	\begin{equation}
		\vSigma( X) = (X \times \vI)/ (X\times \{0\}),\; (X\times \{1\}) \cong (X \times \vR)/ (X\times (-\infty,0]),\; (X\times [1,+\infty))
	\end{equation}
	with the quotient d-structure, i.e., the poorest one such that the projection $X\times\vI\to\vSigma(X)$ is a d-map. Denote the contracted points by $\bO$ and $\bI$ respectively. It is not very difficult to check that for $x,y\in X$, $s,t\in(0,1)$
	\begin{equation}
		\vP_{\vSigma(X)}((x,s),(y,t))\simeq \begin{cases}
			\vP_X(x,y) & \text{for $s\leq t$}\\
			\emptyset & \text{for $s>t$,}
		\end{cases}
	\end{equation}
	and that	
	\[
		\vP_{\vSigma(X)}((x,s),\bI)\simeq \vP_{\vSigma(X)}(\bO,(y,t)) \simeq \{*\}.
	\]
	Moreover, $\vP_{\vSigma(X)}(\bO,\bI)\simeq X$; this statement is not quite obvious and will be proven below.  Thus, if $X$ is not $\fF$--contractible, regarded as a topological space:
	\begin{enumerate}[\normalfont (a)]
	\item{$\cM^+_{\fF}(\vSigma(X))=\{\{\bO\},\vec\Sigma(X)\setminus\{\bO\}  \}$ and $\vSP^+_{\fF,L}(\vec\Sigma(X))$ has two objects, with $L(X)$ being the morphism object between them.}
	\item{If $X$ admits a coarsest total $\fF$--component system, then
\begin{equation}
	\cM^{\pm}_{\fF}(\vec\Sigma(X))=\{A\times (0,1)\;|\; A\in \cM^{\pm}_\fF(X) \}\cup \{ \{\bO\},\{\bI\}\},
\end{equation}	
	 and the category $\vSP^\pm_\fF(\vec\Sigma(X))$ can be obtained from $\vSP^\pm_{\fF}(X)$ by adding two objects $\bO$ and $\bI$ with the morphism objects
$\vSP^\pm_\fF(\vec\Sigma(X))(\bO,\bI)\simeq X$, and for $A\in\vSP^\pm_\fF(X)$
\begin{align*}
	\vSP^\pm_\fF(\vec\Sigma(X))(\bO,A) & \simeq \vSP^\pm_\fF(\vec\Sigma(X))(A,\bI) \simeq \{*\}\\
	\vSP^\pm_\fF(\vec\Sigma(X))(\bI,A) & \simeq \vSP^\pm_\fF(\vec\Sigma(X))(A,\bO) \simeq \emptyset.
\end{align*}	
	 }
	\end{enumerate}
\end{exa}

It remains to check that  $\vP_{\vSigma(X)}(\bO,\bI)$ is homotopy equivalent to $X$. This can be deduced from the calculations presented in \cite[Sections 8 \& 9]{WZ} but we will give a more elementary proof. 

\begin{prp}\label{p:Section}
	There exists a continuous map $q:\vP(\vI)_0^1\to (0,1)$ such that $0<\alpha(q(\alpha))<1$ for every $\alpha\in\vP(\vI)_0^1$.
\end{prp}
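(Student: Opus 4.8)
The plan is to build $q$ by an explicit integral formula rather than by a naive geometric recipe. Recall that $\vP(\vI)_0^1$ consists of continuous non-decreasing maps $\alpha\colon[0,1]\to[0,1]$ with $\alpha(0)=0$ and $\alpha(1)=1$, topologized by uniform convergence. The obvious candidate for $q$ --- the midpoint of the largest interval on which $\alpha$ takes values in $(0,1)$, i.e. of $(\,\sup\{t:\alpha(t)=0\},\ \inf\{t:\alpha(t)=1\}\,)$ --- does \emph{not} work, because these two exit times are not continuous in $\alpha$. The device I intend to use is to replace them by a \emph{weighted median}: fix a continuous $g\colon[0,1]\to[0,\infty)$ with $g^{-1}(0)=\{0,1\}$ (say $g(u)=u(1-u)$), set $F_\alpha(t)=\int_0^t g(\alpha(s))\,ds$ and $M_\alpha=F_\alpha(1)$, and define $q(\alpha)$ to be the solution of $F_\alpha(t)=M_\alpha/2$.

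First I would record the elementary structure of $F_\alpha$. Writing $s_0=\sup\{t:\alpha(t)=0\}$ and $s_1=\inf\{t:\alpha(t)=1\}$, monotonicity and continuity of $\alpha$ together with $\alpha(0)=0\neq 1=\alpha(1)$ give $s_0<s_1$ and $\alpha^{-1}((0,1))=(s_0,s_1)$. Since $g$ vanishes only at $0$ and $1$, this yields $F_\alpha\equiv 0$ on $[0,s_0]$; $F_\alpha$ strictly increasing on $[s_0,s_1]$ (its derivative there is $g(\alpha(t))>0$); and $F_\alpha\equiv M_\alpha$ on $[s_1,1]$, with $M_\alpha=\int_{s_0}^{s_1}g(\alpha(s))\,ds>0$. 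Hence $F_\alpha(t)=M_\alpha/2$ has a unique solution $q(\alpha)$, it lies in $(s_0,s_1)\subseteq(0,1)$, and therefore $0<\alpha(q(\alpha))<1$, as required.

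The remaining, and main, point is continuity of $q$. I would first observe that $\alpha\mapsto F_\alpha$ is continuous from $\vP(\vI)_0^1$ to $C([0,1])$ with the supremum norm: if $\alpha_n\to\alpha$ uniformly then $g\circ\alpha_n\to g\circ\alpha$ uniformly by uniform continuity of $g$, so $F_{\alpha_n}\to F_\alpha$ uniformly, and in particular $M_{\alpha_n}\to M_\alpha$. Then, given $\alpha$ and $\varepsilon>0$ with $(q(\alpha)-\varepsilon,q(\alpha)+\varepsilon)\subseteq(s_0,s_1)$, strict monotonicity of $F_\alpha$ on $[s_0,s_1]$ gives $F_\alpha(q(\alpha)-\varepsilon)<M_\alpha/2<F_\alpha(q(\alpha)+\varepsilon)$; for $\beta$ close enough to $\alpha$ the same strict inequalities hold with $\beta$ in place of $\alpha$, and since $F_\beta$ is non-decreasing with $F_\beta(q(\beta))=M_\beta/2$ this forces $q(\beta)\in(q(\alpha)-\varepsilon,q(\alpha)+\varepsilon)$.

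I expect the only delicate point is exactly this last step: confirming that the strict sign conditions on $F_\alpha$ at $q(\alpha)\pm\varepsilon$ persist under a small perturbation of $\alpha$ and pin down $q(\beta)$; everything else is routine. Put differently, the ``hard part'' is not any computation but the choice of construction --- one must resist the naive exit-time selection and instead take the median of the measure $g(\alpha(s))\,ds$, whose support is precisely the interval $(s_0,s_1)$ where $\alpha$ is interior, with $g$ vanishing at the endpoints just enough to make the selection continuous.
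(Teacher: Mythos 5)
Your construction is correct: the median $q(\alpha)$ of the measure $g(\alpha(s))\,ds$ with $g(u)=u(1-u)$ is well defined (since $F_\alpha$ vanishes on $[0,s_0]$, is strictly increasing on $[s_0,s_1]$ and constant $=M_\alpha>0$ afterwards), lies in $(s_0,s_1)$ so that $0<\alpha(q(\alpha))<1$, and your perturbation argument pinning $q(\beta)$ between $q(\alpha)\pm\varepsilon$ via the strict inequalities $F_\beta(q(\alpha)-\varepsilon)<M_\beta/2<F_\beta(q(\alpha)+\varepsilon)$ and the monotonicity of $F_\beta$ is sound (the compact-open topology on $\vP(\vI)_0^1$ is the sup-metric topology, so your uniform-convergence estimates are exactly what is needed). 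However, this is a genuinely different route from the paper's. The paper does not build $q$ by hand: it composes $\alpha\mapsto\alpha\times\id_{\vI}\in\vP(\vI^2)_{(0,0)}^{(1,1)}$ with Raussen's continuous normalization map $N=norm\circ nat$ from the trace-space paper, evaluates the normalized path at $1/2$, and projects; unwinding the definitions, the resulting $q(\alpha)$ is the unique time at which the graph of $\alpha$ crosses the antidiagonal, i.e. $\alpha(q(\alpha))=1-q(\alpha)\in(0,1)$, with continuity inherited from the cited construction. Your approach buys a self-contained, elementary proof that avoids importing the normalization machinery; the paper's buys brevity (a few lines given the citation) and an explicit geometric description of the selected point. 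One could also note that your weighted-median idea and the paper's construction are cousins: Raussen's normalization is essentially arc-length reparametrization, so the paper's $q$ is a median with respect to the measure $ds+d\alpha(s)$, while yours uses $g(\alpha(s))\,ds$ with $g$ vanishing at the endpoints to force the value into the open interval directly.
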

\begin{proof}
	Let $ \vP_{norm}(\vI^2)_{(0,0)}^{(1,1)}\subseteq \vP(\vI^2)_{(0,0)}^{(1,1)}$ denote the space of normalized d-paths, i.e., d-paths $\beta=(\beta_1,\beta_2)$ satisfying $\beta_1(t)+\beta_2(t)=t/2$ for all $t\in[0,1]$. In \cite[Section 2]{R-Trace}, Raussen constructed, in a much more general setting, the continuous map 
	\begin{equation*}
		N=norm\circ nat:\vP(\vI^2)_{(0,0)}^{(1,1)}\to \vP_{norm}(\vI^2)_{(0,0)}^{(1,1)}
	\end{equation*}
	such that every $\beta\in \vP(\vI^2)_{(0,0)}^{(1,1)}$ is a non-decreasing reparametrization of $N(\beta)$, i.e., there exists a non-decreasing continuous surjection $f_\beta:[0,1]\to[0,1]$ such that $\beta(t)=N(\beta)(f_\beta(t))$. Since $N(\beta)$ is injective, then $f_\beta$ is determined uniquely. Let $q$ be the composition
	\begin{equation*}
		\vP(\vI)_0^1 \xrightarrow{\alpha\mapsto\alpha\times\id_{\vI}}\vP(\vI^2)_{(0,0)}^{(1,1)} \xrightarrow{N} \vP_{norm}(\vI^2)_{(0,0)}^{(1,1)} \xrightarrow{\alpha\mapsto\alpha(1/2)} \{(x,y)\in\vI^2\;|\; x+y=1\} \xrightarrow{(x,y)\mapsto y} [0,1].
	\end{equation*}
	Notice that $(\alpha(t),t)\not\in \{(0,1),(1,0)\}$, which implies that $0<q(\alpha)<1$ for all $\alpha\in\vP(\vI)_0^1$. Choose $t\in (f_{(\alpha,\id_{\vI})})^{-1}(1/2)$; we have
	\begin{equation}
		(\alpha(t),t)=N(\alpha\times\id_{\vI})(1/2)=(1-q(\alpha),q(\alpha)),
	\end{equation}
	which shows that $t=q(\alpha)$ and  $\alpha(q(\alpha))=\alpha(t)=1-q(\alpha)\in (0,1)$.
\end{proof}

By $[x,t]$, $x\in X$, $t\in\R$ we denote the point of $\vSigma(X)$ represented by the pair $(x,t)$; we assume $[x,t]=\bO$ for $t\leq 0$ and $[x,t]=\bI$ for $t\geq 1$. Similarly, we will write $\alpha=[\alpha_X,\alpha_I]$ if the d-path $\alpha\in\vP(\vSigma(X))$ is represented by the pair $(\alpha_X\in\vP(X), \alpha_I\in\vP(\vI))$.

For $a\leq b\in\R$, $x\in X$, let $\beta^{[a,b]}_x$ be the d-path in $\vSigma(X)$ given by the formula $\beta^{[a,b]}_x(t)=[(1-t)a+tb, x]$.

\begin{prp}
	For every d-space $X$, the map $J:X\ni x\mapsto \beta^{[0,1]}_x \in \vP(\vSigma(X))_\bO^\bI$ is a homotopy equivalence.
\end{prp}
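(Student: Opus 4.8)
The plan is to write down an explicit homotopy inverse and then contract $J\circ r$ to the identity in two controlled steps. Throughout I use the model $\vSigma(X)=(X\times\vI)/(X\times\{0\},\,X\times\{1\})$ and write a d-path $\alpha\in\vP_{\vSigma(X)}(\bO,\bI)$ as $\alpha=[\alpha_X,\alpha_I]$, where $\alpha_I\in\vP(\vI)_0^1$ is its $\vI$--coordinate (this is determined by $\alpha$, since $(x,t)\mapsto t$ descends to a continuous map $\vSigma(X)\to\vI$) and $\alpha_X\in\vP(X)$ is an $X$--coordinate; the latter is canonically determined on the open set $\alpha_I^{-1}((0,1))$, on which $\alpha$ takes values in the open subspace $X\times(0,1)\subseteq\vSigma(X)$. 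First I would define $r:\vP_{\vSigma(X)}(\bO,\bI)\to X$ by $r(\alpha)=\pi_X\bigl(\alpha(q(\alpha_I))\bigr)$, using the map $q$ of Proposition \ref{p:Section} and the projection $\pi_X:X\times(0,1)\to X$; this is legitimate and continuous because $\alpha_I(q(\alpha_I))\in(0,1)$, so $\alpha(q(\alpha_I))\in X\times(0,1)$. Since $\beta^{[0,1]}_x$ has $\vI$--coordinate $\id_\vI$ and $X$--coordinate $\sigma_x$, one gets $r\bigl(J(x)\bigr)=x$, i.e.\ $r\circ J=\id_X$.

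It remains to deform $J\circ r$ to $\id$ on $\vP_{\vSigma(X)}(\bO,\bI)$, which I would do by concatenating two homotopies. \emph{Step one} freezes the $\vI$--coordinate and collapses the $X$--coordinate onto its value at the marked time $q_\alpha:=q(\alpha_I)$:
\[
  G^{(1)}_s(\alpha)=\bigl[\,\alpha_X\circ\ell^\alpha_s\,,\ \alpha_I\,\bigr],\qquad \ell^\alpha_s(t)=(1-s)\,t+s\,q_\alpha .
\]
Each $\ell^\alpha_s:\vI\to\vI$ is non-decreasing, so $\alpha_X\circ\ell^\alpha_s\in\vP(X)$ and $G^{(1)}_s(\alpha)$ is again a d-path from $\bO$ to $\bI$; moreover $G^{(1)}_0=\id$ and $G^{(1)}_1(\alpha)=[\sigma_{r(\alpha)},\alpha_I]$. \emph{Step two} straightens the $\vI$--coordinate, the $X$--coordinate now being the constant $r(\alpha)$:
\[
  G^{(2)}_s(\alpha)=\bigl[\,\sigma_{r(\alpha)}\,,\ (1-s)\,\alpha_I+s\,\id_\vI\,\bigr],
\]
which lies in $\vP_{\vSigma(X)}(\bO,\bI)$ because $(1-s)\alpha_I+s\,\id_\vI\in\vP(\vI)_0^1$, and which satisfies $G^{(2)}_0(\alpha)=G^{(1)}_1(\alpha)$ and $G^{(2)}_1(\alpha)=[\sigma_{r(\alpha)},\id_\vI]=\beta^{[0,1]}_{r(\alpha)}=J(r(\alpha))$. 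Concatenating $G^{(1)}$ with $G^{(2)}$ yields $J\circ r\simeq\id$, so $J$ is a homotopy equivalence.

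\textbf{The main obstacle} I expect is checking that $G^{(1)}$ is well defined and continuous, the presentation $[\alpha_X,\alpha_I]$ not being globally unique ($\alpha_X$ is only pinned down on $\alpha_I^{-1}((0,1))$). What makes Step one legal — and the reason the $\vI$--coordinate has to be frozen there — is that whenever $\alpha_I(t)\in(0,1)$, both $t$ and $q_\alpha$ lie in the open interval $\alpha_I^{-1}((0,1))$, the latter precisely by Proposition \ref{p:Section}; hence so does the convex combination $\ell^\alpha_s(t)$, and therefore $\alpha_X(\ell^\alpha_s(t))$ is the genuine $X$--coordinate of $\alpha(\ell^\alpha_s(t))$ and is independent of the chosen presentation. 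A short case analysis shows moreover that if $\alpha(\ell^\alpha_s(t))$ equals $\bO$ (resp.\ $\bI$) then necessarily $\alpha_I(t)=0$ (resp.\ $1$), so $G^{(1)}_s(\alpha)(t)$ is $\bO$ (resp.\ $\bI$) and no $X$--coordinate is needed; continuity of $G^{(1)}$ then follows from continuity of evaluation, of $\alpha\mapsto\alpha_I$, and of $q$, together with the fact that $X\times(0,1)$ is an open subspace of $\vSigma(X)$ on which $\pi_X$ is continuous. Step two involves no such difficulty. The only remaining point to record is that every d-path from $\bO$ to $\bI$ does admit a presentation $[\alpha_X,\alpha_I]$ with a single non-decreasing $\alpha_I$, which is immediate from the description of the quotient d-structure on $\vSigma(X)$.
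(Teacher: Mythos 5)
Your proposal is correct and shares its two key ingredients with the paper's proof: the continuous section $q$ of Proposition \ref{p:Section} and the retraction $r(\alpha)=\pi_X(\alpha(q(\alpha_I)))$ (the paper's $S$) with $r\circ J=\id_X$. Where you genuinely differ is in the homotopy $J\circ r\simeq\id$. The paper never modifies the $X$--coordinate of $\alpha$: via a ``shift'' map it pushes the suspension coordinate of $\alpha|_{[0,q(\alpha_I)]}$ down into $\bO$ and that of $\alpha|_{[q(\alpha_I),1]}$ up into $\bI$, while inserting a growing vertical segment $\beta^{[a(s,\alpha),b(s,\alpha)]}_{r(\alpha)}$ at the marked time; this requires the auxiliary identifications $H_0\sim\id$ and $H_1\sim J\circ S$. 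You instead freeze the suspension coordinate and reparametrize the $X$--coordinate onto its value at $q(\alpha_I)$, and only afterwards straighten the suspension coordinate linearly. Your version buys exact endpoints ($G^{(1)}_0=\id$, $G^{(2)}_1=J\circ r$, no extra reparametrization homotopies), and your well-definedness analysis (both $t$ and $q_\alpha$ lie in $\alpha_I^{-1}((0,1))$, hence so does their convex combination) is carried out correctly; the paper's version buys a single formula and keeps all deformation inside the suspension direction.

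One step is thinner than it should be, although the paper is no more careful about the analogous point (continuity of its map $\sh$ is simply asserted): the continuity of $G^{(1)}$ at parameters with $\alpha_I(t)\in\{0,1\}$. The reasons you list (openness of $X\times(0,1)$, continuity of $\pi_X$ there, of evaluation and of $q$) settle only the case where the anchor $\alpha(\ell^\alpha_s(t))$ lies in $X\times(0,1)$; then the $X$--coordinate of nearby values is controlled and the preimage of any neighborhood of $\bO$ contains a box $V\times[0,\epsilon)$ around the relevant point of $X\times\{0\}$. But when the anchor itself is $\bO$, the $X$--coordinate of nearby values is completely uncontrolled, and for non-compact $X$ the preimage of a neighborhood of $\bO$ need not contain any strip $X\times[0,\epsilon)$, so ``suspension coordinate small'' alone does not suffice. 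The repair is short: on that side $t\leq q_\alpha$, hence $\ell^\alpha_s(t)\geq t$ and $\alpha_I(t)\leq\alpha_I(\ell^\alpha_s(t))$ by monotonicity; and by the tube lemma every saturated open set $U\supseteq X\times\{0\}$ contains a saturated open $U'\supseteq X\times\{0\}$ such that $(x,v)\in U'$ and $0\leq w\leq v$ imply $(x,w)\in U$. Requiring $\alpha(\ell^\alpha_s(t))$ to lie in the image of $U'$ then forces $G^{(1)}_s(\alpha)(t)$ into the image of $U$; the $\bI$ end is symmetric. With this addition (and a line justifying, as both you and the paper implicitly use, that every $\alpha\in\vP(\vSigma(X))_\bO^\bI$ admits a presentation $[\alpha_X,\alpha_I]$ with $\alpha_X\in\vP(X)$), your argument is complete.
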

\begin{proof}
	We will show that the map
	\[
		S: \vP(\vSigma(X))_\bO^\bI\ni \alpha=[\alpha_X,\alpha_I]\mapsto \alpha_X(q(\alpha_I))\in X
	\]
	is a homotopy inverse of $J$. Notice that $S(\alpha)$ does not depend on the choice of $\alpha_X$ and $\alpha_I$. Clearly $S\circ J$ is the identity on $X$. We will construct a homotopy between $J\circ S$ and the identity on $\vP(\vSigma(X))_\bO^\bI$. The idea is to paste to a path $\alpha$ at $q(\alpha_I)$, where $q$ is the map from Proposition \ref{p:Section}, a longer and longer ``vertical" segment. 
	
	The formula $\sh(\alpha,h)(t)=[\alpha_I(t)+h,\alpha_X(t)]$ defines the continuous ``shift" map 
	\[
		\sh:\{(\alpha,h)\in \vP(\Sigma(X))\times \R\; |\; \text{ ($\alpha_I(1)<1$ and $h\leq 0$) or ($\alpha_I(0)>0$ and $h\geq 0$)} \} \to  \vP(\Sigma(X)).
	\]
	Let $\alpha|_{[a,b]}$ denote the path $\alpha|_{[a,b]}(t)=\alpha((1-t)a+tb)$. The formula
	\[
		H_s(\alpha)=\sh(\alpha|_{[0,q(\alpha_I)]},-s\alpha_I(q(\alpha_I)))*\beta^{[a(s,\alpha),b(s,\alpha)]}_{\alpha_X(q(\alpha_I))}*\sh(\alpha|_{[q(\alpha_I),1]},s(1-\alpha_I(q(\alpha_I)))),
	\]
	where
	\begin{align*}
		a(s,\alpha)&=(1-s)\alpha_I(q(\alpha_I))\\
		b(s,\alpha)&=\alpha_I(q(\alpha_I)) + s(1-\alpha_I(q(\alpha_I))),
	\end{align*}
	defines the homotopy between self-maps of $\vP(\vSigma(X))_\bO^\bI$; notice that the condition $0<\alpha_I(q(\alpha_I))<1$ guarantees that $\alpha_X(q(\alpha_I))$ is well-defined, and that the arguments of the functions $\sh$ lie in the domain. We have
	\begin{align*}
		H_0(\alpha)&=\alpha|_{[0,q(\alpha_I)]}*\sigma_{\alpha(q(\alpha_I))}* \alpha|_{[q(\alpha_I),1]},\\
		H_1(\alpha)&=\sigma_\bO * \beta^{[0,1]}_{\alpha_X(q(\alpha_I))} * \sigma_\bI,
	\end{align*}
	and it is elementary to check that $H_0\sim \id_{ \vP(\vSigma(X))_\bO^\bI}$, $H_1\sim J\circ S$.
\end{proof}

\subsection*{Future work}
The stable component categories constitute a family of invariants of directed spaces with no loops. Still, they suffers some limitations; the author hopes that the following question can be answered.

\begin{itemize}
\item{Functoriality. Let $X$ and $Y$ be d-spaces with no loops that admit a coarsest (future/past/total) $\fF$--component system. Assume given a d-map  $f:X\to Y$. Is it possible to define an induced functor $\vSP^\mu_{\fF,L}(f):\vSP^\mu_{\fF,L}(X)\to \vSP^\mu_{\fF,L}(Y)$ in a functorial way?}
\item{Loops. The construction above essentially uses the fact that d-spaces under consideration have no loops. However, in many cases, for example for pre-cubical complexes, loops can be avoided by passing to universal, or loop-length coverings. These coverings do not, in general, admit finite stable component systems but the author hopes that it is possible to prove they admit the coarsest component systems, which can be used to define component categories also in this case.}
\item{Connections with \cite{DGG}. The stable component categories $\vSP^\mu_{\fH_R,H_*}(X)$ seem to be related to natural homology, introduced by Dubut, Goubault and Goubault-Larrecq. It would be interesting to investigate connections between these two invariants.}
\end{itemize}

\end{document}